\DeclareMathAlphabet{\mathpzc}{OT1}{pzc}{m}{it} 
\newtheorem{Thm}{Theorem}[section]
\newtheorem{Cor}[Thm]{Corollary}
\newtheorem{Lem}[Thm]{Lemma}
\newtheorem{Prop}[Thm]{Proposition}
\newtheorem{Def}[Thm]{Definition}
\theoremstyle{definition}
\newtheorem{Rem}[Thm]{Remark}
\theoremstyle{definition}
\theoremstyle{definition}
\theoremstyle{definition} 
\newcommand\Sres{\mathcal{S}}
\newcommand\s{\mathpzc{S}} 
\newcommand{\quat}{\mathbb{H}}
\newcommand{\A}{\mathsf{A}}  
\DeclareMathOperator{\re}{Re} 
\newcommand{\Id}{\ \!\mathsf{Id}}
\newcommand{\Sq}{\mathsf{S}_q^{{}^{-1}}}
\newcommand{\Q}{\mathsf{Q}}
\newcommand{\R}{\mathsf{R}}  
\newcommand\spow{\mathfrak{p}}
\newcommand{\qspow}{\spow_{q_0,n}}
\newcommand\qbar{{\bar q}} 
\newcommand\pbar{{\bar p}} 
\newcommand{\eps}{\varepsilon}
\newcommand\funzione{\longrightarrow}
\newcommand\ci{\mathbb{C}} 
\newcommand\erre{\mathbb{R}} 
\newcommand{\su}{\mathbb{S}}  
\renewcommand\j{\mathbf{j}}  
\renewcommand\k{\mathbf{k}} 
\DeclareMathOperator{\im}{Im} 
\newcommand\setmeno{\!\smallsetminus\!} 
\providecommand{\opint}[1]{\hspace{0.15ex}\left]#1\right[\hspace{0.15ex}} 
\newcommand\norma[2]{\Vert #1\Vert_{#2}} 
\newcommand{\B}{\mathsf{B}}  
\renewcommand\r{\textsl{r}} 
\newcommand\Lin{\mathscr{L}}
\newcommand{\C}{\mathsf{B}}
\newcommand\zbar{\overline{z}}
\newcommand{\F}{\mathsf{F}}  
\newcommand{\G}{\mathsf{G}}  
\newcommand{\M}{\mathsf{M}}  
\newcommand{\N}{\mathsf{N}}  
\providecommand{\opint}[1]{\hspace{0.15ex}\left]#1\right[\hspace{0.15ex}} 
\providecommand{\clsxint}[1]{\hspace{0.1ex}\left[#1\right[\hspace{0.15ex}} 
\DeclareMathOperator{\de}{d \! \hspace{0.2ex}} 
\renewcommand{\S}{\mathsf{S}}  
\newcommand{\Sp}{\mathsf{S}_p^{{}^{-1}}}  
\renewcommand\sp{\hspace{2.8ex}} 
\newcommand\enne{\mathbb{N}} 
\def\vuoto{\varnothing} 
\newcommand{\lra}{\longrightarrow} 
\newcommand{\OO}{\Omega}
\newcommand{\mr}{\mathrm}
\definecolor{blu1}{rgb}{0.1,0.1,1}
\definecolor{verde}{rgb}{0.1,0.4,0.2}
\definecolor{pink}{rgb}{1.0, 0.33, 0.64}
\begin{document}


\title[Quaternionic resolvent equation]{Quaternionic resolvent equation \\ and series expansion of \\ the 
$\Sres$-resolvent operator}

\author{Riccardo Ghiloni and Vincenzo Recupero}

\address{\textbf{Riccardo Ghiloni}\\
        Dipartimento di Matematica\\ 
        Universit\`a di Trento\\
        Via Sommarive 14\\ 
        38123 Trento\\ 
        Italy. \newline
        {\rm E-mail address:}
        {\tt ghiloni@science.unitn.it}}
\address{\textbf{Vincenzo Recupero}\\
        Dipartimento di Scienze Matematiche\\ 
        Politecnico di Torino\\
        C.so Duca degli Abruzzi 24\\ 
        10129 Torino\\ 
        Italy. \newline
        {\rm E-mail address:}
        {\tt vincenzo.recupero@polito.it}}
\thanks{The first author is partially supported by GNSAGA-INdAM. The second author is partially supported by GNAMPA-INdAM}

\subjclass[2010]{30G35, 47A60, 47A10}
\keywords{Quaternions, $S$-resolvent operator, Resolvent equation, Spherical series expansions, Functional calculus, Cassini pseudo-metric}
   
\date{}


\begin{abstract}
In the present paper, we prove a resolvent equation for the $\Sres$-resolvent ope\-rator in the quaternionic framework. Exploiting this resolvent equation, we find a series expansion for the $\Sres$-resolvent operator in an open neighborhood of any given quaternion belonging to the $\Sres$-resolvent set. Some consequences of the series expansion are deduced. In particular, we describe a property of the geometry of the $\Sres$-resolvent set in terms of the Cassini pseudo-metric on quaternions. The concept of vector-valued real analytic function of several variables plays a crucial role in the proof of the mentioned series expansion for the $\Sres$-resolvent operator.
\end{abstract}


\maketitle


\thispagestyle{empty}


\section{Introduction}
The algebra of quaternions $\quat$ was introduced by W.R.\ Hamilton in the midst of the nineteenth century in order to derive elegant and efficient vector analysis formulas for classical mechanics (cf.\ \cite{Ham67}). Nowadays quaternions are still used in classical mechanics, and they also proved to be extremely useful in computer graphics and robotics (see, e.g., \cite{GolPooSaf01, Hog92}).
Another field of application of quaternions is quantum mechanics, indeed G.\ Birkhoff and J.\ von Neumann in their paper \cite{BirNeu36} pointed out that quantum mechanics can be formulated also on Hilbert spaces whose set of scalars is $\quat$, and this possibility was proved to be actually true in \cite{Sol95}. This remark originated the study of quantum mechanics in the quaternionic framework (see, e.g., \cite{FinJauSchSpe62, Emc63, HorBie84, CasTru85, Adl95}), but the full rigorous development of the quaternionic formulation of quantum mechanics was prevented by the lack of a suitable quaternionic notion of spectrum (cf., e.g., the discussions in \cite{CoGeSaSt,GhiMorPer13}).

The first step toward a rigorous formulation of quaternionic quantum mechanics has its basis in the theory of functions of a quaternionic variable developed starting from \cite{GeSt}. In that paper, Gentili and Struppa introduced the notion of slice regular function, which turns out to be an efficient counterpart of the complex concept of holomorphic function (this theory was then developed in several papers: see \cite{GenSto22} and the references therein). Then F.\ Colombo and I.\ Sabadini realized that slice regularity could be conveniently exploited to find the proper notion of quaternionic spectrum in order to establish a rigorous formulation of quaternionic quantum mechanics (see \cite{CoGeSaSt07}). To be more precise, given a right linear operator $\A$ on a quaternionic Banach space $X$, its $\Sres$-resolvent set was defined in \cite{CoGeSaSt07} as the set $\rho_\s(\A)$ of quaternions $q$ such that the inverse of 
\[
  \Delta_q(\A) := \A^2 -2\re(q)\A + |q|^2 \Id
\] 
exists and is bounded. Here $\Id$ denotes the identity on $X$. So they introduced the notion of (left) $\Sres$-resolvent operator 
\begin{align}\label{left sph res op intro}
  \Sq(\A)
  & := \Q_q(\A) \qbar - \A\Q_q(\A),
\end{align}
where 
\[
  \Q_q(\A) := (\A^2 -2\re(q)\A + |q|^2\Id)^{-1}
\]
and $\Sq(\A)$ proved to be the proper kernel which allowed to develop a rigorous quaternionic functional calculus (cf. the papers \cite{CoGeSaSt,CoGeSaSt10,ColSab11,GhiRec16}, the monographs \cite{CoSaSt, ColGanKim18} and the references therein).

In the classical complex theory, the study the resolvent operator 
$\R_\lambda(\A) = (\A - \lambda\Id)^{-1}$ is based on two powerful tools: the resolvent equation 
\begin{equation}\label{res eq class}
\R_\lambda(\A) - \R_\mu(\A) = (\mu-\lambda)\R_\lambda(\A)\R_\mu(\A)
\end{equation}
and  the resolvent power expansion
\begin{equation}\label{C-exp of R}
  \R_\lambda(\A) = \sum_{n=0}^\infty (\mu-\lambda)^n \R_\mu(\A)^{n+1}
\end{equation}
in a neighborhood of $\mu$. These two concepts are still missing in quaternionic functional analysis. A first step to fill this gap was made in \cite{AlpColGanSab15} with the proof of the identity
\begin{align}\label{mix res eq}
 \S_{R,q}^{{}^{-1}}(\A)\S_{L,p}^{{}^{-1}}(\A) = 
 [(\S_{R,q}^{{}^{-1}}(\A)-\S_{L,p}^{{}^{-1}}(\A))p - 
 \qbar(\S_{R,q}^{{}^{-1}}(\A)-\S_{L,p}^{{}^{-1}}(\A))]
 (p^2 - 2\re(q)p+|q|^2)^{-1},
\end{align}
where $\S_{L,p}^{{}^{-1}}(\A) = \Sp(\A) $ is the left $\Sres$-resolvent operator defined in \eqref{left sph res op intro} and 
\begin{equation}\label{righ sph res op intro}
  \S_{R,q}^{{}^{-1}}(\A) := (\qbar\Id-\A)\Q_q(\A) = (\qbar\Id-\A)(\A^2 -2\re(q)\A + |q|^2\Id)^{-1} 
\end{equation}
is called right $\Sres$-resolvent operator. Formula \eqref{mix res eq} involves a nice interaction between the left and right $\Sres$-resolvent operators and allows some properties to be deduced, but it does not provide a tool for obtaining an expansion of $\Sq(\A)$ analogous to \eqref{C-exp of R}.

In the present paper, we prove the following resolvent equation for $\Sq$ that involves only the left $\Sres$-resolvent operator and reduces to \eqref{C-exp of R} when $q$ is real:
\begin{equation}\label{eq:qre-intro}
  \Sp(\A) - \Sq(\A) 
    = \Q_q(\A)(q-p)+\Q_q(\A)\Sp(\A)\triangle_q(p),
\end{equation}
where $\triangle_q:\quat\lra\quat$ is the function defined by
\begin{equation}
  \triangle_{q}(p) := p^2 - 2\re(q)p + |q|^2.
\end{equation}
The proof of identity \eqref{eq:qre-intro} will be based on the following resolvent equation for $\Q_q(\A)$:
\begin{equation}\label{resolv.eq. for Q-intro}
   \Q_p(\A) - \Q_q(\A) 
    = \big(\Delta_p(\A) - \Delta_q(\A)\big) \Q_p(\A) \Q_q(\A), 
\end{equation}         
which enjoys a formal symmetry that has a strong similarity with the classical resolvent equation \eqref{res eq class}. Our resolvent equation \eqref{eq:qre-intro} is the key to establishing a new series expansion that is the quaternionic counterpart of \eqref{C-exp of R}. Indeed, we will show that for $q$ in a suitable open neighborhood of $q_0$ we have
\begin{equation}\label{S-exp-intro}
  \Sq(\A) = \sum_{n=1}^\infty \C_{n+1}\qspow(q),
\end{equation}
with coefficients
\begin{equation}\label{Bn-intro}
  \C_n :=
  \begin{cases}
    \Q_{q_0}(\A)^k & \text{if $n=2k$, \  $k \in \enne$,} \\
    \Q_{q_0}(\A)^k{\S^{{}^{-1}}_{q_0}}(\A) & \text{if $n=2k+1$, \  $k \in \enne$,}
  \end{cases}
\end{equation}
and ``powers''
\begin{equation}\label{pn-intro}
  \qspow(q) :=
  \begin{cases}
    \triangle_{q_0}(q)^k & \text{if $n=2k$, \ $k \in \enne$,} \\
    (q-q_0)\triangle_{q_0}(q)^k & \text{if $n=2k+1$, \ $k \in \enne$.}
  \end{cases}
\end{equation}
Although the  expressions \eqref{Bn-intro} and \eqref{pn-intro} may seem complicated at first glance, one can be convinced of the naturalness and relevance of the series expansion \eqref{resolv.eq. for Q-intro} in the quaternionic framework when considering the case of functions. Indeed, in \cite{Sto12}, C.\ Stoppato proved that every right slice regular function $f:\quat\funzione\quat$ admits a series expansion of the form
\begin{equation}\label{series of functions}
  f(q) = \sum_{n=0}^\infty b_n\qspow(q)
\end{equation}
in an open neighborhood of $q_0$ in $\quat$, for suitable constants $b_n \in \quat$. The importance of \eqref{series of functions} is clear if we consider the fact that in the literature the previous existing series expansions of $f$ converge in sets which are not open in $\quat$ (see the discussion in \cite[Chapter 8]{GenSto22}).

We would also like to emphasize that the way we prove \eqref{S-exp-intro} is completely independent of the proof of \eqref{series of functions} in \cite{Sto12}. We have thus found an alternative description of the role played by functions $\qspow$ in quaternionic analysis. This description is quite surprising and unexpected, considering that it does not seem possible to deduce \eqref{S-exp-intro} directly from \eqref{series of functions}.

In the next section, we state our main results in their complete form. In Section \ref{S:proof of the resolvent equation}, we prove our resolvent equation. Section \ref{S:Proof of of the expansion} is devoted to the proofs of the series expansion of the $\Sres$-resolvent operator and some of its corollaries. In these proofs, we use some arguments from the theory of real analytic functions. In the final Appendix \ref{appendix}, we prove some auxiliary facts about vector-valued analytic functions.


\section{Preliminaries and main results}\label{S:preliminaries}

The symbol $\enne$ will denote the set of integers greater than or equal to $0$.

\begin{Def}
We denote by $\quat$ the real algebra $\erre^4$, generated by the standard basis $1 := (1,0,0,0), i := (0,1,0,0), j := (0,0,1,0), k := (0,0,0,1)$, endowed with the only associative multiplication such that $i^2 = j^2 = k^2 = -1$ and $ij = k, jk=i, ki= j$. The elements of 
$\quat$ are called \emph{quaternions}, its identity is $1_\quat = 1$ so that every $q \in \quat$ can be written in the form 
\begin{equation}
  q = x_0 + x_1i + x_2j + x_3k \in \quat, \quad x_0, x_1, x_2, x_3 \in \erre.
\end{equation}
We define the \emph{conjugate of $q$} by
$\overline{q} := x_0 - x_1i - x_2j - x_3k,$
and we set $\re(q) := x_0$, $\im(q) := x_1i + x_2j + x_3k$. We will endow $\quat$ with the topology induced by the Euclidean norm defined by $|q| := \sqrt{q\qbar}$ for $q \in \quat$.
The \emph{unit imaginary sphere} is the set defined by
\begin{equation}
  \mathbb{S} := \{\j \in \quat:\j^2 = -1\}.
\end{equation}
For every $\j \in \mathbb{S}$, we set $\ci_\j := \{r+s\j \in \quat:r, s \in \erre\}$ so that 
$\quat = \bigcup_{\j \in \mathbb{S}} \ci_\j$.
The real vector subspace of $\quat$ generated by $1$ will be identified with $\erre$.
\end{Def}

\begin{Def}
A \emph{two-sided $\quat$-module} is a commutative group $(X,+)$ endowed with a left scalar multiplication $\quat \times X \funzione X : (q,x) \longmapsto qx$ and a right scalar multiplication 
$X \times \quat \funzione X : (x,q) \longmapsto xq$ such that 
\begin{alignat}{5}
 & q(x+y) = qx + qy, &\quad& (x+y)q = xq + yq	&\qquad&\forall x, y \in X,	&\quad &\forall q \in \quat, \notag \\
 & (p+q)x = px + qx, &\quad& x(p+q) = xp + xq	&\qquad& \forall x \in X,     &\quad &\forall p, q \in \quat, \notag \\   
 & 1x = x 	= x1	       &\quad&                              &\qquad& \forall x \in X,     &                                       \notag \\  
 & p(qx) = (pq)x,      &\quad& (xp)q = x(pq)  	&\qquad&    \forall x \in X, 	&\quad &\forall p, q \in \quat, \notag \\
 & p(xq) = (p x)q      &\quad& \qquad                 &\qquad&        \forall x \in X, &\quad &\forall p, q \in \quat,   \notag \\
 & r x = x r 	      &\quad&  \qquad                 &\qquad&   \forall x \in X, &\quad &\forall r \in \erre.  \notag
\end{alignat}
If $Y$ is a subgroup of $X$, then $Y$ is called a \emph{left $\quat$-submodule} if $qx \in Y$ whenever $x \in Y$ and $q \in \quat$. Instead, $Y$ is called a \emph{right $\quat$-submodule} of $X$ if $xq \in Y$ whenever $x \in Y$ and $q \in \quat$. Finally, $Y$ is called a \emph{two-sided $\quat$-submodule} of $X$ if it is both a left and a right $\quat$-submodule of $X$.
\end{Def}

The previous definition (except the condition involving the real scalar $r$) can be found in 
\cite[Chapter 1, Section 2]{AndFul74}. 

\begin{Def}
A two-sided $\quat$-module $X$ is called a \emph{normed two-sided $\quat$-module} if it is endowed with a \emph{$\quat$-norm on $X$}, that is, a function $\norma{\cdot}{} : X  \funzione \clsxint{0,\infty}$ such that
\begin{alignat}{3}
  & \norma{x}{} = 0 \ \ \Longrightarrow \ \ x = 0, \notag \\
  & \norma{x + y}{} \le \norma{x}{} + \norma{y}{} 		& \qquad & \forall x, y \in X,  \notag \\
  & \norma{qx}{} = \norma{xq}{} = |q|\norma{x}{} 	
  & \qquad & \forall x \in X, \quad \forall q \in \quat. \label{eq:homog} 
\end{alignat}
We say that $X$ is a \emph{Banach two-sided $\quat$-module} if the metric 
$d : X \times X \funzione \clsxint{0,\infty} : (x,y) \longmapsto \norma{x-y}{}$ is complete.
\end{Def}

\begin{Def}
Assume that $X$ is a two-sided $\quat$-module. Let $D(\A)$ be a right $\quat$-submodule of $X$. We say that $\A : D(\A) \funzione X$ is \emph{right linear} if it is additive and
\begin{equation*}
 \A(xq) = \A(x) q \qquad \forall x \in D(\A), \quad \forall q \in \quat.
\end{equation*}
As usual, the notation $\A x$ is often used in place of $\A(x)$. The identity operator $\Id_X$, or simply $\Id$, is right linear with $D(\!\!\hspace{0.32ex}\Id) = X$. Moreover, if $X$ is a normed two-sided $\quat$-module, then we say that 
$\A : D(\A) \funzione X$ is \emph{closed} if its graph is closed in $X \times X$. As in the classical theory, we set $D(\A^n) := \{x \in D(\A^{n-1}) \, : \, \A^{n-1} x \in D(\A)\}$ for every $n \in \enne \setmeno \{0\}$, where $\A^0:=\Id$, $\A^n x := \A\A^{n-1} x$ for $x \in D(\A^n)$.
\end{Def}

For the first part of the previous definition, see also \cite[Chapter 1, p.\ 42]{AndFul74}. For the following notions, see, e.g., \cite[Chapter 1, pp.\ 55-57]{AndFul74}.

\begin{Def}
Let $X$ be a two-sided $\quat$-module, let $D(\A)$ be a right $\quat$-submodule of $X$ and let 
$q \in \quat$. If $\A : D(\A) \funzione X$ is a right linear operator, then we define the right linear operator 
$q \A : D(\A) \funzione X$ by setting
\begin{equation}
   (q \A)(x) := q \A(x), \qquad 
   x \in D(\A).  \label{operatore per scalare a sx}
\end{equation}  
If $D(\A)$ is also a left $\quat$-submodule of $X$, then we can define the right linear operator $\A q :D(\A) \funzione X$ by setting
\begin{equation}
(\A q)(x):=\A(q x), \qquad
x \in D(\A). \label{operatore per scalare a dx}
\end{equation} 
The sum of operators is defined in the usual way.
\end{Def}

\begin{Def}
Assume that the two-sided $\quat$-module $X$ is normed with $\quat$-norm $\norma{\cdot}{}$. For every 
right linear $\B : X \funzione X$, we set
\begin{equation}\label{norma operatoriale}
  \norma{\B}{} := \sup\{\norma{\B x}{}{}:\norma{x}{}\le 1\}
\end{equation}
and we define the set
$\Lin^\r(X) := \{\B  : X \funzione X:\B \text{ right linear},\ \norma{\B}{} < \infty\}$, which turns out to be a normed two-sided 
$\quat$-module when it is endowed with the usual sum of operators, the products 
\eqref{operatore per scalare a sx}, \eqref{operatore per scalare a dx}, and the norm 
\eqref{norma operatoriale}.
\end{Def}

In the reminder of the paper, we will assume that
\begin{equation}\label{X}
  \text{\it $X$ is a Banach two-sided $\quat$-module.} 
\end{equation}
In the next definition, we define the quaternionic spectral notions introduced in \cite{CoGeSaSt07} and \cite{CoGeSaSt10}.

\begin{Def}\label{D:spherical spectral notions}
Assume that \eqref{X} holds, let $D(\A)$ be a right $\quat$-submodule of $X$, and let $\A : D(\A) \funzione X$ be a closed right linear operator. Given $q \in \quat$, we define the right linear operator 
$\Delta_q(\A) : D(\A^2) \funzione X$ by setting
\begin{equation}\label{def delta}
  \Delta_q(\A) := \A^2 - 2\re (q) \,\A + |q|^2 \Id.
\end{equation}
The \emph{$\Sres$-resolvent set $\rho_\s(\A)$ of $\A$} and the 
\emph{$\Sres$-spectrum $\sigma_\s(\A)$ of $\A$} are the 
subsets of $\quat$ defined as follows: 
\begin{align}
  & \rho_\s(\A)  :=  \{q \in \quat : \text{$\Delta_q(\A)$ is bijective, $\Delta_q(\A)^{-1} \in \Lin^\r(X)$}\}, \\
  & \sigma_\s(\A)  :=  \quat \setmeno \rho_\s(\A).
\end{align}
For every $q \in \rho_\s(\A)$, the operators $\Q_q(\A) : X \funzione X$ and 
$\Sq(\A) : X \funzione X$ are defined by
\begin{align}
  & \Q_q(\A) := \Delta_q(\A)^{-1},  \\
& \Sq(\A) := \Q_q(\A) \overline{q} - \A \Q_q(\A). \label{resolvent operator}
\end{align}
The operator $\Sq(\A)$ is called \emph{$\Sres$-resolvent operator of $\A$ at $q$}.
\end{Def}

The symbol $\Sq(\A)$ and the names $\Sres$-resolvent and $\Sres$-spectrum were introduced in \cite{CoGeSaSt07,CoGeSaSt10}.
In the more general setting of alternative *-algebras, we used the notation $\mathsf{C}_q(\A)$, inspired by the Cauchy kernel function $C_q$ used in \cite{GhiPer11} for the Cauchy integral formula, see \cite{GhiRec16, GhiRec18, GhiRec23}.

We are now ready to state our main results.

Given any $q\in\quat$, $\triangle_q:\quat\funzione\quat$ denotes 
the function defined by
\begin{equation}\label{triangle_q}
\triangle_q(p):=p^2-2\re(q)p+|q|^2.
\end{equation}

\begin{Thm}[Resolvent equation]\label{T:resolv.eq.}
Assume that \eqref{X} holds. If $D(\A)$ is a right $\quat$-submodule of $X$ and $\A : D(\A) \funzione X$ is a closed right linear operator, then $\Sq(\A) \in \Lin^\r(X)$ for every $q\in\rho_\s(\A)$ and
\begin{align} \label{eq:qre}
  \Sp(\A) - \Sq(\A) 
    & = \Q_q(\A)(q-p)+\Q_q(\A)\Sp(\A)\triangle_q(p)
    \qquad \forall p, q \in \rho_\s(\A).
\end{align}
\end{Thm}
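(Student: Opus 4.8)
The plan is to derive \eqref{eq:qre} from the more symmetric resolvent equation \eqref{resolv.eq. for Q-intro} for $\Q_q(\A)$. First I would establish that identity. For $p,q\in\rho_\s(\A)$, both $\Delta_p(\A)$ and $\Delta_q(\A)$ are bijective from $D(\A^2)$ onto $X$ with bounded inverses, so $\Q_p(\A)$ and $\Q_q(\A)$ map $X$ into $D(\A^2)$. Starting from the algebraic identity $\Q_p(\A)-\Q_q(\A)=\Q_p(\A)\big(\Delta_q(\A)-\Delta_p(\A)\big)\Q_q(\A)$ — which is legitimate because $\big(\Delta_q(\A)-\Delta_p(\A)\big)\Q_q(\A)=-2(\re(p)-\re(q))\A\Q_q(\A)+(|p|^2-|q|^2)\Q_q(\A)$ is a bounded operator on $X$ (here $\A\Q_q(\A)$ is bounded by the closed graph theorem, since $\Q_q(\A)$ maps into $D(\A)$) and $\Q_p(\A)$ is applied to something in $X$ — and then symmetrizing, one obtains \eqref{resolv.eq. for Q-intro}. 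Note that $\Delta_p(\A)-\Delta_q(\A)=-2(\re(p)-\re(q))\A+(|p|^2-|q|^2)\Id=:\delta_{p,q}(\A)$ is a first-order (hence unbounded, but everywhere-defined on $D(\A)$) operator whose composition with $\Q_p(\A)\Q_q(\A)$ is nonetheless bounded.

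Next I would translate this into a statement about $\Sq(\A)=\Q_q(\A)\qbar-\A\Q_q(\A)$. The key computational observation is that the first-order part of $\Delta_q$ can be absorbed: writing $\A\Q_q(\A)=\Q_q(\A)\qbar-\Sq(\A)$ from \eqref{resolvent operator}, one can express $\delta_{p,q}(\A)\Q_q(\A)=\big(\Delta_p(\A)-\Delta_q(\A)\big)\Q_q(\A)$ purely in terms of $\Q_q(\A)$, $\Sq(\A)$, and scalar multiplications. More precisely, I expect the cleaner route is to compute $\Sp(\A)-\Sq(\A)$ directly: apply \eqref{resolv.eq. for Q-intro} multiplied on the right by $\qbar$ and separately study $\A(\Q_p(\A)-\Q_q(\A))$, using that $\A$ commutes with $\Q_p(\A)$ and with $\Delta_p(\A)$ on the appropriate domains, so that $\A\big(\Delta_p(\A)-\Delta_q(\A)\big)\Q_p(\A)\Q_q(\A)=\big(\Delta_p(\A)-\Delta_q(\A)\big)\A\Q_p(\A)\Q_q(\A)$. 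Then one collects terms, uses $\A^2\Q_q(\A)=\Id-(-2\re(q)\A+|q|^2\Id)\Q_q(\A)$ to eliminate second powers of $\A$, and recognizes the right-hand side. The bookkeeping should collapse, after grouping the scalar coefficients, to $\Q_q(\A)(q-p)+\Q_q(\A)\Sp(\A)\triangle_q(p)$; the term $\triangle_q(p)=p^2-2\re(q)p+|q|^2$ will emerge precisely as the "scalar version" of $\Delta_q(\A)$ evaluated along the $\Sp(\A)$ factor, which is the manifestation of the noncommutativity of $\quat$.

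The main obstacle I anticipate is not any single deep idea but the careful handling of domains and the order of (noncommuting) scalar multiplications by $p$, $q$, $\qbar$, $\re(p)$, $\re(q)$. One must be scrupulous that every manipulation of unbounded operators — multiplying \eqref{resolv.eq. for Q-intro} by $\A$, commuting $\A$ past $\Q_p(\A)$, replacing $\A^2\Q_q(\A)$ — is performed on a domain where all operators involved are defined (e.g. $\Q_q(\A)$ lands in $D(\A^2)$, and $\Delta_p(\A)\Q_p(\A)=\Id$ holds as operators on $X$). The boundedness claim $\Sq(\A)\in\Lin^\r(X)$ for $q\in\rho_\s(\A)$ follows once we know $\A\Q_q(\A)\in\Lin^\r(X)$: this is a consequence of the closed graph theorem, since $\A$ is closed and $\Q_q(\A)$ is a bounded operator from $X$ into $D(\A)$, making $\A\Q_q(\A)$ a closed everywhere-defined operator, hence bounded. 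A secondary subtlety is that the scalars $q,p$ appearing on the right in \eqref{eq:qre} multiply the operators from the right (as in $\Q_q(\A)(q-p)$, i.e. the operator $x\mapsto\Q_q(\A)((q-p)\cdot)$ — no, rather $\Q_q(\A)$ post-composed with right multiplication by $q-p$), so I would fix the convention early and track it consistently; the identity \eqref{eq:qre} is an identity in $\Lin^\r(X)$ and right-scalar-multiplication of operators is well defined there by \eqref{operatore per scalare a dx} since the relevant domains are two-sided.
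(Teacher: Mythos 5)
Your proposal is correct and follows essentially the paper's route: establish the resolvent equation for $\Q_q(\A)$ (Proposition \ref{P:resolv.eq. for Q}), expand $\Sp(\A)-\Sq(\A)$ using it, and simplify by means of a consequence of $\Delta_q(\A)\Q_q(\A)=\Id$ --- the paper packages the needed algebraic collapse as the identity $\A\Sp(\A)=\Sp(\A)p-\Id$ in Lemma \ref{L:(Dq - Dp)Cp=}, whereas you invoke the equivalent relation $\A^2\Q_q(\A)=\Id+2\re(q)\A\Q_q(\A)-|q|^2\Q_q(\A)$ --- and you obtain boundedness of $\Sq(\A)$ from the closed graph theorem applied to $\A\Q_q(\A)$, exactly as in Lemma \ref{AQ Lin}. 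One small correction on conventions: by \eqref{operatore per scalare a dx}, the operator $\Q_q(\A)(q-p)$ is $x\mapsto\Q_q(\A)\big((q-p)x\big)$ (pre-composition with left scalar multiplication), so your first parsing was right and the subsequent self-correction to ``post-composed with right multiplication'' is the wrong reading.
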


Recall that the function $\mr{u}:\quat\times\quat\lra\erre$, defined by $\mr{u}(p,q):=\sqrt{|\triangle_q(p)|}$, is a pseudo-metric on~$\quat$, called \emph{Cassini pseudo-metric}, see \cite[pp.\ 505-507]{GhiPer14} and also \cite[Section~2]{Sto12} and \cite[Section~6]{GhiPerSto17}. Given $q_0\in\quat$ and a positive real number $R$, we denote $\mr{U}(q_0,R)$ the $\mr{u}$-ball of $\quat$ centered at $q_0$ with radius $R$, that is,
\begin{equation}\label{cassini}
\mr{U}(q_0,R):=\{p\in\quat:\mr{u}(p,q_0)<R\}.
\end{equation}
The set $\mr{U}(q_0,R)$ is called \emph{Cassini ball of $\quat$ centered at $q_0$ with radius $R$}. We call \emph{Cassini topology of $\quat$} the topology of $\quat$ induced by the Cassini pseudo-metric $\mr{u}$. The origin of this nomenclature is as follows. Write $q_0=r_0+Is_0$ for some $r_0,s_0\in\erre$ and $I\in\mathbb{S}$, and set $z_0:=r_0+is_0\in\ci$ and $C:=\{z\in\ci:|(z-z_0)(z-\overline{z_0})|<R^2\}$. Observe that the boundary of $C$ in $\ci$ is a Cassini oval for $z_0\not\in\erre$ and a circle for $z_0\in\erre$. It is immediate to verify that $\mr{U}(q_0,R)$ is the circularization of $C$, that is, $\mr{U}(q_0,R)=\bigcup_{r,s\in\erre,\,r+is\in C}(r+s\mathbb{S})$. Consequently, $\mr{U}(q_0,R)$ is open with respect to the Euclidean topology of $\quat=\erre^4$. It follows that the Cassini topology of $\quat$ is coarser than the Euclidean topology of $\quat$.

\begin{Thm}[Series expansion of the $\Sres$-resolvent operator]\label{T:main}
Assume that \eqref{X} holds. Consider a right $\quat$-submodule $D(\A)$ of $X$ and a closed right linear operator $\A : D(\A) \funzione X$ such that $\rho_\s(\A) \cap \erre \neq \vuoto$. Pick a point $q_0\in\rho_\s(\A)$, set $R:=\norma{\Q_{q_0}(\A)}{}^{-1/2}>0$, and define the operators $\C_n\in\Lin^\r(X)$ and the functions $\qspow:\quat\lra\quat$ by
\begin{equation}\label{Bn first}
  \C_n :=
  \begin{cases}
    \Q_{q_0}(\A)^k & \text{if $n=2k$, \  $k \in \enne$,} \\
    \Q_{q_0}(\A)^k{\S_{q_0}^{{}^{-1}}}(\A) & \text{if $n=2k+1$, \  $k \in \enne$}
  \end{cases}
\notag\\
\end{equation}
and
\begin{equation*}
  \qspow(q) :=
  \begin{cases}
    \triangle_{q_0}(q)^k & \text{if $n=2k$, \ $k \in \enne$,} \\
    (q-q_0)\triangle_{q_0}(q)^k & \text{if $n=2k+1$, \ $k \in \enne$.}
  \end{cases}
\end{equation*}
where $\triangle_{q_0}(q)$ is defined by \eqref{triangle_q}.
Then we have:
\begin{itemize}
 \item[$(\mr{i})$] $\mr{U}(q_0,R)\subset\rho_\s(\A)$.
 \item[$(\mr{ii})$] $\rho_\s(\A)$ is open in the Cassini topology of $\quat$ (hence also in the Euclidean topology of $\quat$).
 \item[$(\mr{iii})$] For every $q\in\mr{U}(q_0,R)$, the series $\sum_{n=0}^{\infty}(-1)^n \C_{n+1} \qspow(q)$ converges in $\Lin^r(X)$ and its sum is $\Sq(\A)$, that is,
\begin{equation*}\label{series of S}
\Sq(\A)=\sum_{n=0}^{\infty}(-1)^n \C_{n+1} \qspow(q)\,.
\end{equation*}
\end{itemize}
\end{Thm}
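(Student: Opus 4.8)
The plan is to derive the series expansion from the resolvent equation \eqref{eq:qre} of Theorem~\ref{T:resolv.eq.} by a bootstrap/iteration argument, making everything rigorous with the machinery of vector-valued real analytic functions from the Appendix. The key algebraic input is to rewrite \eqref{eq:qre} with $q$ replaced by the fixed point $q_0$ and $p$ replaced by a variable $q$: this gives
\begin{equation*}
  \Sq(\A) = \Q_{q_0}(\A)(q_0-q) + \Q_{q_0}(\A)\Sq(\A)\triangle_{q_0}(q),
\end{equation*}
valid for $q\in\rho_\s(\A)$. Formally iterating this identity — substituting the whole expression back into the $\Sq(\A)$ on the right-hand side repeatedly — and recalling $\S_{q_0}^{{}^{-1}}(\A)=\Q_{q_0}(\A)\overline{q_0}-\A\Q_{q_0}(\A)$, one checks by an easy induction that the partial sums telescope into $\sum_{n=0}^{N}(-1)^n\C_{n+1}\qspow(q)$ plus a remainder term of the form $(-1)^{N+1}\Q_{q_0}(\A)^{?}\Sq(\A)\triangle_{q_0}(q)^{?}$; here one must carefully track the even/odd bookkeeping that produces the two cases in the definitions of $\C_n$ and $\qspow$. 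Thus the heart of the matter is to show, first, that $\mr{U}(q_0,R)\subset\rho_\s(\A)$ so that $\Sq(\A)$ makes sense there, and second, that the remainder tends to $0$ in $\Lin^\r(X)$ as $N\to\infty$ when $\mr u(q,q_0)<R$.

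For part $(\mr{i})$, the natural route is a Neumann-series argument at the level of $\Q_q$ rather than $\S^{-1}$: from \eqref{resolv.eq. for Q-intro} (or directly from the definition \eqref{def delta}) one writes $\Delta_q(\A) = \Delta_{q_0}(\A)\big(\Id + \Q_{q_0}(\A)(\Delta_q(\A)-\Delta_{q_0}(\A))\big)$, and since $\Delta_q(\A)-\Delta_{q_0}(\A) = -2(\re q-\re q_0)\A + (|q|^2-|q_0|^2)\Id$ is, modulo the unbounded $\A$, controlled by $\triangle_{q_0}(q)$, one shows $\Q_q(\A)(\Delta_q(\A)-\Delta_{q_0}(\A))$ extends to a bounded operator with norm $\le \norma{\Q_{q_0}(\A)}{}\,|\triangle_{q_0}(q)| = |\triangle_{q_0}(q)|/R^2 < 1$; invertibility of $\Delta_q(\A)$ with bounded inverse follows, i.e. $q\in\rho_\s(\A)$. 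The one technical subtlety is handling the operator $\A$ carefully: one works on $D(\A^2)$ and uses closedness of $\A$ to justify that the relevant compositions are everywhere-defined and bounded, exactly as in the proof of Theorem~\ref{T:resolv.eq.}. Part $(\mr{ii})$ is then immediate from $(\mr{i})$ together with the observation, already recorded after the statement, that Cassini balls are Euclidean-open; every point of $\rho_\s(\A)$ has a Cassini-ball neighborhood inside $\rho_\s(\A)$.

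For part $(\mr{iii})$, I would combine the telescoping identity with a norm estimate on the remainder: $\norma{(-1)^{N+1}\Q_{q_0}(\A)^{a}\,\Sq(\A)\,\triangle_{q_0}(q)^{b}}{}\le \norma{\Q_{q_0}(\A)}{}^{a}\,\norma{\Sq(\A)}{}\,|\triangle_{q_0}(q)|^{b}$ with $a+b$ growing like $N$ and $\norma{\Q_{q_0}(\A)}{}\,|\triangle_{q_0}(q)| = |\triangle_{q_0}(q)|/R^2<1$ on $\mr{U}(q_0,R)$, so the remainder is $O\big((|\triangle_{q_0}(q)|/R^2)^{\lfloor N/2\rfloor}\big)\to 0$; the same geometric bound shows the series $\sum(-1)^n\C_{n+1}\qspow(q)$ is absolutely convergent in the Banach module $\Lin^\r(X)$, since $\norma{\C_{n+1}\qspow(q)}{}\lesssim (|\triangle_{q_0}(q)|/R^2)^{\lfloor n/2\rfloor}$. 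Here is where the role of the real-analyticity results from the Appendix enters: to identify the sum of the series with $\Sq(\A)$ on all of $\mr{U}(q_0,R)$ — not merely on a small Euclidean neighborhood of $q_0$ where the iteration is transparent — one regards both sides as $\Lin^\r(X)$-valued functions of the real coordinates $(x_0,x_1,x_2,x_3)$ of $q$, checks that each is real analytic on the connected open set $\mr{U}(q_0,R)$ (using the hypothesis $\rho_\s(\A)\cap\erre\ne\vuoto$ to anchor a real base point, and the Appendix's results on composition and series of vector-valued analytic functions), and invokes the identity principle for real analytic functions, since the two sides agree on a nonempty open subset. I expect the main obstacle to be precisely this last point: organizing the remainder estimate so that it is uniform enough to conclude convergence on the full Cassini ball, and setting up the real-analytic framework cleanly enough that the identity principle applies — the purely formal manipulations with \eqref{eq:qre} are routine, but turning "the remainder goes to zero" into a statement about a genuine Banach-module-valued analytic function on a connected domain is where care is needed.
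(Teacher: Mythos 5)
Your outline of the telescoping iteration and the remainder estimate matches the paper's Propositions~\ref{P:sviluppoN} and~\ref{C_q series}; that much is fine. The gap is in your proof of part $(\mr{i})$, which is the crux, and a related circularity in your proof of $(\mr{iii})$.

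Your Neumann-series argument for $(\mr{i})$ does not give the claimed radius. You factor $\Delta_q(\A)=\Delta_{q_0}(\A)\bigl(\Id+\Q_{q_0}(\A)(\Delta_q(\A)-\Delta_{q_0}(\A))\bigr)$ and assert that $\Q_{q_0}(\A)(\Delta_q(\A)-\Delta_{q_0}(\A))$ has norm $\le\norma{\Q_{q_0}(\A)}{}\,|\triangle_{q_0}(q)|$. But $\Delta_q(\A)-\Delta_{q_0}(\A)=-2\re(q-q_0)\A+(|q|^2-|q_0|^2)\Id$ is affine in $\A$, and the composition with $\Q_{q_0}(\A)$ gives $-2\re(q-q_0)\A\Q_{q_0}(\A)+(|q|^2-|q_0|^2)\Q_{q_0}(\A)$. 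Although $\A\Q_{q_0}(\A)\in\Lin^\r(X)$ by Lemma~\ref{AQ Lin}, its norm bears no relation to $\norma{\Q_{q_0}(\A)}{}$, so the bound you claim simply fails; what a Neumann series actually gives is invertibility on a Euclidean neighborhood whose size depends on $\norma{\A\Q_{q_0}(\A)}{}$, not on the Cassini ball $\mr{U}(q_0,R)$. This is precisely why the sharp radius $R=\norma{\Q_{q_0}(\A)}{}^{-1/2}$ cannot be obtained by perturbation estimates alone. Your use of the identity principle in $(\mr{iii})$ then suffers from a circularity: you propose to compare the series with $\Sq(\A)$ as real analytic functions on $\mr{U}(q_0,R)$, but $\Sq(\A)$ is only defined on $\rho_\s(\A)$, which at that stage is not yet known to contain $\mr{U}(q_0,R)$; and once $(\mr{i})$ is established, the remainder estimate already yields $(\mr{iii})$ without any further analytic-continuation step.

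The paper's route is genuinely different at exactly this point. It first shows (Proposition~\ref{C_q series}) that the series sums to $\Sq(\A)$ on the a priori nonempty open set $\rho_\s(\A)\cap\mr{U}$. It then takes the spherical derivative $\G_q:=-\partial_\s\F(q)$ of the series, proves that $q\mapsto\G_q$ is a real analytic $\Lin^\r(X)$-valued function on all of $\mr{U}$ (Proposition~\ref{prop:sd}), and — crucially, via a delicate closedness argument in Lemma~\ref{GinDA2} — that $\G_q$ maps $X$ into $D(\A^2)$. The identity principle (Lemma~\ref{vector analytic}$(\mr{i})$) is then applied not to a comparison between the series and $\Sq(\A)$, but to the bona fide real analytic functions $q\mapsto\Q_{q_0}(\A)(\Delta_q(\A)\G_q-\Id)$ and $q\mapsto(\G_q\Delta_q(\A)-\Id)\Q_{q_0}(\A)$, which are globally defined on $\mr{U}$ and vanish on the open set $\rho_\s(\A)\cap\mr{U}$ by Corollary~\ref{Qq analyt}. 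This shows $\G_q$ is a two-sided inverse of $\Delta_q(\A)$ for every $q\in\mr{U}$, which is what gives $\mr{U}\subset\rho_\s(\A)$. To repair your proof you would need to drop the Neumann argument entirely and instead construct a candidate inverse from the series, verify its range lies in $D(\A^2)$ via closedness, and run the identity principle on operator-valued identities as above.
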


Let us present three corollaries.

Following \cite[Definition 6]{GhiPer11}, we define the \emph{spherical derivative} $\partial_\s\qspow(q):\quat\lra\quat$ of $\qspow$ as $\partial_\s\qspow(q):=(\qspow(q) - \qspow(\qbar))(q-\qbar)^{-1}$ if $q\in\quat\setmeno\erre$ and $\partial_\s\qspow(q):=\frac{\partial\qspow}{\partial r}(q)$ if $q\in\erre$, where $\frac{\partial\qspow}{\partial r}$ denotes the derivative of $\qspow$ in the direction of the real axis $\erre$.

\begin{Cor}\label{cor3}
Assume that \eqref{X} holds. Let $D(\A)$ be a right $\quat$-submodule of $X$, let $\A : D(\A) \funzione X$ be a closed right linear operator such that $\rho_\s(\A) \cap \erre \neq \vuoto$, let $q_0\in \rho_\s(\A)$ and let $R:=\norma{\Q_{q_0}(\A)}{}^{-1/2}>0$. Then, for every $q\in\mr{U}(q_0,R)\subset\rho_\s(\A)$, the series $\sum_{n=0}^{\infty}(-1)^{n+1} \C_{n+1}\partial_\s\qspow(q)$ converges in $\Lin^r(X)$ and its sum is $\Q_q(\A)$, that is,
\[
\Q_q(\A)=\sum_{n=0}^{\infty}(-1)^{n+1} \C_{n+1} \partial_\s \qspow(q)\,.
\]

Moreover, if we endow $\rho_\s(\A)$ with the relative topology induced by the Cassini topology of $\quat$, then the mapping $\rho_\s(\A)\funzione\Lin^r(X):q\longmapsto \Q_q(\A)$ is continuous.
\end{Cor}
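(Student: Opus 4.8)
The plan is to obtain the series for $\Q_q(\A)$ by differentiating the series for $\Sq(\A)$ from Theorem \ref{T:main} with respect to the real axis, and then reading off the result via the spherical-derivative formalism. First I would recall the relation between $\Sq(\A)$ and $\Q_q(\A)$. From the resolvent equation \eqref{eq:qre} (or directly from Definition \ref{D:spherical spectral notions}), one checks that $\Q_q(\A)$ can be recovered from the pair $\Sq(\A)$, $\S_{\qbar}^{{}^{-1}}(\A)$ by the identity $\Q_q(\A) = (\Sq(\A) - \S_{\qbar}^{{}^{-1}}(\A))(q-\qbar)^{-1}$ for $q\in\quat\setminus\erre$, with the obvious limiting version on $\erre$; this is precisely the statement that $\Q_q(\A) = \partial_\s\big(\S^{{}^{-1}}_{(\cdot)}(\A)\big)(q)$, i.e.\ the spherical derivative of the $\Sres$-resolvent operator is $\Q$. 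So the key computation is: applying the spherical derivative termwise to $\Sq(\A)=\sum_{n=0}^\infty(-1)^n\C_{n+1}\qspow(q)$ should yield $\sum_{n=0}^\infty(-1)^n\C_{n+1}\,\partial_\s\qspow(q)$, and this must equal $\Q_q(\A)$ up to the sign bookkeeping in the statement (the extra $(-1)$ comes from the indexing convention, which I would verify on the $n=0$ term: $\qspow[1](q)=q-q_0$, $\partial_\s\qspow[1](q)=1$, $\C_1 = \S^{{}^{-1}}_{q_0}(\A)$, matched against the constant term of $\Q_q(\A)$ near $q_0$).

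Since $\C_{n+1}$ are constant operators, the content reduces to two analytic facts, both of which I expect to draw from the Appendix on vector-valued real analytic functions. The first is that the spherical derivative commutes with the (locally uniformly convergent) series: one writes $\partial_\s f(q) = (f(q)-f(\qbar))(q-\qbar)^{-1}$ off $\erre$, so for a series this is immediate away from the real axis, and then one argues that the sum $\sum(-1)^n\C_{n+1}\partial_\s\qspow(q)$ still converges locally uniformly on all of $\mr{U}(q_0,R)$ — including where it meets $\erre$ — and extends the identity by continuity. Here the growth estimate needed is $\|\partial_\s\qspow(q)\| \lesssim R^{-n}\cdot(\text{something summable against }\mr{u}(q,q_0)^{\text{power}})$ on compact subsets of $\mr{U}(q_0,R)$; concretely $\partial_\s\qspow[2k](q)$ telescopes to $\sum_{j=0}^{k-1}\triangle_{q_0}(q)^j\triangle_{q_0}(\qbar)^{k-1-j}$ and $\partial_\s\qspow[2k+1](q) = \triangle_{q_0}(q)^k$ (since $(q-q_0)\triangle_{q_0}(q)^k - (\qbar-q_0)\triangle_{q_0}(\qbar)^k$ over $q-\qbar$ simplifies using $\triangle_{q_0}(q)=\triangle_{q_0}(\qbar)$ when $q,\qbar$ lie on the same sphere — this is the crucial simplification, as $\triangle_{q_0}$ depends only on $\re q$ and $|q|$). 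These simplifications make the convergence bookkeeping essentially the same geometric-series estimate that proves Theorem \ref{T:main}(iii), with radius controlled by $R^2 = \|\Q_{q_0}(\A)\|^{-1}$.

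The second fact is that this series sum is indeed $\Q_q(\A)$, not merely ``the spherical derivative of the resolvent series''. The cleanest route is to combine the identity $\Q_q(\A)=(\Sq(\A)-\S_{\qbar}^{{}^{-1}}(\A))(q-\qbar)^{-1}$ with Theorem \ref{T:main}(iii) applied at both $q$ and $\qbar$ (note $\qbar\in\mr{U}(q_0,R)$ whenever $q$ is, since $\mr{u}(\qbar,q_0)=\mr{u}(q,q_0)$ because $\triangle_{q_0}$ is sphere-invariant — so the Cassini ball is circular in the relevant sense and closed under conjugation). Subtracting the two series and dividing by $q-\qbar$ produces exactly $\sum(-1)^n\C_{n+1}\partial_\s\qspow(q)$ by the telescoping identities above, establishing the formula for $q\notin\erre$; for $q\in\erre$ one passes to the limit using continuity of both sides, or observes directly that on $\erre$ the expansion is the classical Neumann-type series for $\Q_q(\A)=\Delta_q(\A)^{-1}$ around $q_0$. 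Finally, continuity of $q\mapsto\Q_q(\A)$ in the Cassini topology: local uniform convergence of the $\Q$-series on $\mr{U}(q_0,R)$ exhibits $q\mapsto\Q_q(\A)$ as a locally uniform limit of Cassini-continuous functions (each $\partial_\s\qspow$ is a polynomial expression in $\triangle_{q_0}(q)$, hence Cassini-continuous), and every point of $\rho_\s(\A)$ lies in such a ball by Theorem \ref{T:main}(i)–(ii); so the map is continuous everywhere on $\rho_\s(\A)$.

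The main obstacle I anticipate is the behavior along the real axis inside the Cassini ball: the formula $\partial_\s f(q)=(f(q)-f(\qbar))(q-\qbar)^{-1}$ degenerates there and must be replaced by a real-directional derivative, so one has to verify that termwise differentiation is legitimate across $\erre$ and that the two definitions of $\partial_\s\qspow$ glue into a single real-analytic (indeed polynomial) function of the real and imaginary parts. This is exactly the kind of vector-valued real-analyticity statement that the Appendix is set up to supply, so I would invoke it rather than re-prove it; granting that, the rest is the telescoping algebra plus the same geometric estimate underlying Theorem \ref{T:main}.
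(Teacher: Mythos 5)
Your overall route agrees with the one the paper (tersely) takes: invoke $\Q_q(\A)=-\partial_\s\big(\S^{{}^{-1}}_{(\cdot)}(\A)\big)(q)$ from Corollary~\ref{Qq analyt}, apply $\partial_\s$ termwise to the series of Theorem~\ref{T:main}, and handle convergence and the real axis through the real-analyticity supplied by the Appendix. The paper's proof is a one-liner because Corollary~\ref{Qq analyt} and the functions $\F$ and $\G=-\partial_\s\F$ already built in the proof of Theorem~\ref{T:main} make the termwise differentiation immediate. However, two of the explicit identities you rely on are false and need correcting.

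First, the sign: with $\partial_\s f(q)=(f(q)-f(\qbar))(q-\qbar)^{-1}$ and $\Q_q(\A)=\Q_{\qbar}(\A)$, one gets $\S^{{}^{-1}}_{\qbar}(\A)-\Sq(\A)=\Q_q(\A)(q-\qbar)$, hence $\Q_q(\A)=-\partial_\s\S^{{}^{-1}}(q)$, \emph{not} $+\partial_\s\S^{{}^{-1}}(q)$ as you write; the $(-1)^{n+1}$ in the statement is exactly this minus sign, not an indexing convention. Second, and more substantively, it is \emph{not} true that $\triangle_{q_0}(q)=\triangle_{q_0}(\qbar)$. What is sphere-invariant is the modulus $|\triangle_{q_0}(q)|$ (equivalently $\mr{u}(q,q_0)$), not the quaternion $\triangle_{q_0}(q)$ itself: e.g.\ $\triangle_i(1+i)=1+2i$ while $\triangle_i(1-i)=1-2i$. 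The correct identity is $\triangle_{q_0}(q)-\triangle_{q_0}(\qbar)=2\re(q-q_0)(q-\qbar)$, so $\partial_\s\spow_{q_0,2k}(q)=2\re(q-q_0)\sum_{j=0}^{k-1}\triangle_{q_0}(q)^j\triangle_{q_0}(\qbar)^{k-1-j}$ (you omitted the prefactor), and $\partial_\s\spow_{q_0,2k+1}(q)=\triangle_{q_0}(q)^k+2\re(q-q_0)(\qbar-q_0)\sum_{j=0}^{k-1}\triangle_{q_0}(q)^j\triangle_{q_0}(\qbar)^{k-1-j}$, not $\triangle_{q_0}(q)^k$ alone. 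Your convergence estimate can be repaired since the extra linear factor of $k$ is still dominated by the geometric decay governed by $\|\Q_{q_0}(\A)\||\triangle_{q_0}(q)|<1$, but the ``crucial simplification'' as stated does not hold, and the supporting claim that $\triangle_{q_0}(\cdot)$ depends only on $\re q$ and $|q|$ is mistaken: that invariance holds for $q\longmapsto\triangle_q(p)$ at fixed $p$ (which is why the $\Sres$-spectrum and $\mr{u}$ are axially symmetric), not for $q\longmapsto\triangle_{q_0}(q)$ at fixed $q_0$.
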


Given $q_0\in\quat$ and any subset $T$ of $\quat$, we set 
$\mr{u}(q_0,T):=\inf_{p\in T}\mr{u}(p,q_0)$ if $T\neq\vuoto$ and $\mr{u}(q_0,T):=+\infty$ if $T=\vuoto$.

\begin{Cor}\label{cor1}
Assume that \eqref{X} holds. Let $D(\A)$ be a right $\quat$-submodule of $X$ and let $\A : D(\A) \funzione X$ be a closed right linear operator such that $\rho_\s(\A) \cap \erre \neq \vuoto$.
If $q_0 \in \rho_\s(\A)$, then
\[
  \mr{u}(q_0,\sigma_\s(\A)) \ge \frac{1}{\norma{\Q_{q_0}(\A)}{}^{1/2}}\,.
\]
\end{Cor}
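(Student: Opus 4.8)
\textbf{Proof proposal for Corollary \ref{cor1}.}
The plan is to deduce this directly from part $(\mr{i})$ of Theorem \ref{T:main}. Observe first that the hypotheses of Corollary \ref{cor1} are exactly those of Theorem \ref{T:main}: $X$ is a Banach two-sided $\quat$-module, $D(\A)$ is a right $\quat$-submodule, $\A$ is closed right linear, $\rho_\s(\A)\cap\erre\neq\vuoto$, and $q_0\in\rho_\s(\A)$. So we may set $R:=\norma{\Q_{q_0}(\A)}{}^{-1/2}>0$ and invoke Theorem \ref{T:main}$(\mr{i})$ to get $\mr{U}(q_0,R)\subset\rho_\s(\A)$.

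The core of the argument is then the contrapositive inclusion: since $\mr{U}(q_0,R)\subset\rho_\s(\A)$, taking complements in $\quat$ gives $\sigma_\s(\A)=\quat\setmeno\rho_\s(\A)\subset\quat\setmeno\mr{U}(q_0,R)$, i.e.\ every $p\in\sigma_\s(\A)$ satisfies $p\notin\mr{U}(q_0,R)$, which by the definition \eqref{cassini} of the Cassini ball means precisely $\mr{u}(p,q_0)\ge R$. Hence $R$ is a lower bound for the set $\{\mr{u}(p,q_0):p\in\sigma_\s(\A)\}$, so by definition of the infimum $\mr{u}(q_0,\sigma_\s(\A))=\inf_{p\in\sigma_\s(\A)}\mr{u}(p,q_0)\ge R=\norma{\Q_{q_0}(\A)}{}^{-1/2}$, which is the claimed inequality. (In the degenerate case $\sigma_\s(\A)=\vuoto$ the inequality is trivial, as $\mr{u}(q_0,\vuoto)=+\infty$ by convention.)

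There is essentially no obstacle here: once Theorem \ref{T:main}$(\mr{i})$ is available the corollary is a one-line unwinding of definitions. The only point requiring the slightest care is the strict-versus-non-strict inequality bookkeeping — the Cassini ball \eqref{cassini} is open ($\mr{u}(p,q_0)<R$), so its complement is the \emph{closed} condition $\mr{u}(p,q_0)\ge R$, and it is this $\ge$ that survives passage to the infimum and yields the non-strict bound in the statement.
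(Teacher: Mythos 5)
Your proof is correct and follows exactly the paper's approach: the paper's own proof is the one-liner ``Immediate from Theorem~\ref{T:main}$(\mr{i})$,'' and your argument is simply that same deduction with the complement-taking and infimum unwinding made explicit. No issues.
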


\begin{Cor}\label{cor2}
Assume that \eqref{X} holds. Let $D(\A)$ be a right $\quat$-submodule of $X$, let $\A : D(\A) \funzione X$ be a closed right linear operator such that $\rho_\s(\A) \cap \erre \neq \vuoto$, let $q_0\in\quat$ and let 
$(p_n)_{n\in\mathbb{N}}$ 
be a sequence in $\rho_\s(\A)$ converging to $q_0$ with respect to the Cassini topology of $\quat$. Then $q_0 \in \sigma_\s(\A)$ if and only if $\lim_{n\to\infty}\norma{\Q_{p_n}(\A)}{} = \infty$.
\end{Cor}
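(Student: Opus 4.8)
The plan is to obtain Corollary~\ref{cor2} as a soft consequence of Theorem~\ref{T:main} and Corollaries~\ref{cor3} and~\ref{cor1}, proving the two implications of the equivalence separately, each by contraposition. A preliminary elementary remark will be used throughout: since every $\norma{\Q_{p_n}(\A)}{}$ is finite, the statement ``$\lim_{n\to\infty}\norma{\Q_{p_n}(\A)}{}=\infty$'' fails precisely when there exist a constant $M>0$ and a subsequence $(p_{n_k})_k$ with $\norma{\Q_{p_{n_k}}(\A)}{}\le M$ for all $k$.

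For the implication ``$\lim_{n\to\infty}\norma{\Q_{p_n}(\A)}{}=\infty$ implies $q_0\in\sigma_\s(\A)$'', I would assume instead that $q_0\in\rho_\s(\A)$. By Corollary~\ref{cor3}, the map $\rho_\s(\A)\funzione\Lin^r(X)$, $q\longmapsto\Q_q(\A)$, is continuous when $\rho_\s(\A)$ is given the relative Cassini topology; since the sequence $(p_n)$ lies in $\rho_\s(\A)$ and converges to $q_0\in\rho_\s(\A)$ in the Cassini topology, it follows that $\Q_{p_n}(\A)\to\Q_{q_0}(\A)$ in $\Lin^r(X)$, whence $\norma{\Q_{p_n}(\A)}{}\to\norma{\Q_{q_0}(\A)}{}<\infty$. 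In particular $\norma{\Q_{p_n}(\A)}{}$ does not diverge to $\infty$, the desired contradiction.

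For the converse ``$q_0\in\sigma_\s(\A)$ implies $\lim_{n\to\infty}\norma{\Q_{p_n}(\A)}{}=\infty$'', I would again argue by contraposition: suppose $\norma{\Q_{p_n}(\A)}{}$ does not tend to $\infty$, so by the preliminary remark there are $M>0$ and a subsequence $(p_{n_k})_k$ with $\norma{\Q_{p_{n_k}}(\A)}{}\le M$, hence $\norma{\Q_{p_{n_k}}(\A)}{}^{-1/2}\ge M^{-1/2}=:\delta>0$. Applying Corollary~\ref{cor1} at each $p_{n_k}\in\rho_\s(\A)$ (the standing hypothesis $\rho_\s(\A)\cap\erre\neq\vuoto$ is in force) gives $\mr{u}(p_{n_k},\sigma_\s(\A))\ge\norma{\Q_{p_{n_k}}(\A)}{}^{-1/2}\ge\delta$. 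If $q_0\in\sigma_\s(\A)$, then $q_0$ is one of the competitors in the infimum defining $\mr{u}(p_{n_k},\sigma_\s(\A))$, so $\mr{u}(p_{n_k},q_0)\ge\mr{u}(p_{n_k},\sigma_\s(\A))\ge\delta$ for every $k$; but $\mr{u}(p_{n_k},q_0)\to 0$ because $(p_{n_k})$ converges to $q_0$ in the Cassini topology, a contradiction. Combining the two implications yields the equivalence, and the equivalent statement $q_0\in\rho_\s(\A)$ is obtained as well.

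I do not expect a genuine obstacle here: both directions are routine once Theorem~\ref{T:main}(i) (equivalently Corollary~\ref{cor1}, i.e.\ the ``uniform'' lower bound $\mr{u}(q,\sigma_\s(\A))\ge\norma{\Q_q(\A)}{}^{-1/2}$ valid at every $q\in\rho_\s(\A)$) and the Cassini-continuity of $q\mapsto\Q_q(\A)$ from Corollary~\ref{cor3} are in hand. The only points requiring a little care are the elementary reformulation of the failure of ``$\lim=\infty$'' as the existence of a bounded subsequence of $\norma{\Q_{p_n}(\A)}{}$, and the use of the (symmetric) pseudo-metric structure of $\mr{u}$ to pass from the convergence $p_{n_k}\to q_0$ to the impossibility of the inequality $\mr{u}(p_{n_k},q_0)\ge\delta$.
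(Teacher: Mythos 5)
Your proof is correct and follows essentially the same route as the paper: one direction uses the Cassini-continuity of $q\mapsto\Q_q(\A)$ from Corollary~\ref{cor3}, the other extracts a subsequence on which $\norma{\Q_{p_n}(\A)}{}$ is bounded and applies the lower bound $\mr{u}(q,\sigma_\s(\A))\ge\norma{\Q_q(\A)}{}^{-1/2}$ of Corollary~\ref{cor1} to reach a contradiction with $\mr{u}(p_{n_k},q_0)\to 0$. The only (cosmetic) differences are that the paper extracts a \emph{convergent} subsequence of the norms rather than merely a bounded one, and it invokes the Lipschitz continuity of $\mr{u}(\sigma_\s(\A),\cdot)$ where you instead observe directly that $q_0\in\sigma_\s(\A)$ competes in the infimum.
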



\section{Proof of Theorem \ref{T:resolv.eq.}}\label{S:proof of the resolvent equation}

We start by proving a resolvent equation for the operators $\Q_q(\A)$.

\begin{Prop}[Resolvent equation for $\Q_q(\A)$]\label{P:resolv.eq. for Q}
Assume that \eqref{X} holds. If $D(\A)$ is a right $\quat$-submodule of $X$ and $\A : D(\A) \funzione X$ is a closed right linear operator, then
\begin{align}
   \Q_p(\A) - \Q_q(\A) 
   & = \big(\Delta_p(\A) - \Delta_q(\A)\big) \Q_p(\A) \Q_q(\A) \notag \\
   & = \big(\Delta_p(\A) - \Delta_q(\A)\big) \Q_q(\A) \Q_p(\A) 
          \qquad \forall p, q \in \rho_\s(\A). \label{resolv.eq. for Q}
\end{align}
\end{Prop}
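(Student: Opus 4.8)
The plan is to prove the resolvent equation for $\Q_q(\A)$ by the same formal manipulation that underlies the classical resolvent equation \eqref{res eq class}, being careful about domains since $\A$ is only closed, not bounded. The basic identity one wants is: for $p,q \in \rho_\s(\A)$,
\[
  \Q_p(\A) - \Q_q(\A) = \Q_p(\A)\big(\Delta_q(\A) - \Delta_p(\A)\big)\Q_q(\A),
\]
which would give \eqref{resolv.eq. for Q} once we check that $\Delta_q(\A) - \Delta_p(\A)$ commutes appropriately with the $\Q$'s. First I would observe that $\Delta_q(\A) - \Delta_p(\A) = -2(\re(q)-\re(p))\A + (|q|^2 - |p|^2)\Id$, which is a first-order (in $\A$) operator defined on $D(\A)$, hence certainly on $D(\A^2)$; this is the quaternionic analogue of the scalar $\mu - \lambda$ in the complex case, but now it genuinely involves $\A$.

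The key algebraic step I would carry out is the following. Fix $x \in X$. Since $\Q_q(\A)x \in D(\Delta_q(\A)) = D(\A^2) \subset D(\A)$, the operator $\Delta_p(\A)$ applied to $\Q_q(\A)x$ makes sense, and
\[
  \Delta_p(\A)\Q_q(\A)x = \big(\Delta_p(\A) - \Delta_q(\A)\big)\Q_q(\A)x + x.
\]
Now apply $\Q_p(\A)$ to both sides. Here I must justify that $\big(\Delta_p(\A) - \Delta_q(\A)\big)\Q_q(\A)x$ lies in the domain of $\Q_p(\A)$ — but $\Q_p(\A) \in \Lin^\r(X)$ is defined on all of $X$, so this is automatic, and I get
\[
  \Q_p(\A)\Delta_p(\A)\Q_q(\A)x = \Q_p(\A)\big(\Delta_p(\A) - \Delta_q(\A)\big)\Q_q(\A)x + \Q_p(\A)x.
\]
The left-hand side equals $\Q_q(\A)x$ because $\Q_p(\A)\Delta_p(\A) = \Id$ on $D(\A^2)$ and $\Q_q(\A)x \in D(\A^2)$. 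Rearranging gives $\Q_q(\A)x - \Q_p(\A)x = \Q_p(\A)\big(\Delta_p(\A) - \Delta_q(\A)\big)\Q_q(\A)x$, i.e. $\Q_p(\A) - \Q_q(\A) = \Q_p(\A)\big(\Delta_q(\A) - \Delta_p(\A)\big)\Q_q(\A)$. To match the stated form with $\big(\Delta_p(\A) - \Delta_q(\A)\big)$ on the left, I would instead run the symmetric computation: apply $\Delta_p(\A)$ to $\Q_q(\A)x$, use $\Delta_p(\A)\Q_p(\A) = \Id$, and conclude, getting the factor $\Delta_p(\A) - \Delta_q(\A)$ on the outside. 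Since $\Delta_p(\A) - \Delta_q(\A)$ is a polynomial of degree one in $\A$ with real coefficients, it commutes with $\Q_q(\A)$ on $D(\A)$ and with $\A$, which yields the second equality $\big(\Delta_p(\A)-\Delta_q(\A)\big)\Q_p(\A)\Q_q(\A) = \big(\Delta_p(\A)-\Delta_q(\A)\big)\Q_q(\A)\Q_p(\A)$; more directly, exchanging the roles of $p$ and $q$ in the first identity and comparing gives the commuted form.

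The step I expect to be the main obstacle — or at least the one requiring the most care — is the bookkeeping of domains: making sure every expression like $\Delta_p(\A)\Q_q(\A)x$ or $\A\Q_q(\A)x$ is legitimate (this rests on $\mathrm{ran}\,\Q_q(\A) = D(\A^2)$), and that the factor $\Delta_p(\A) - \Delta_q(\A)$ can be pulled out as a genuine operator identity on $X$ rather than only on a dense subspace. Because $\Delta_p(\A) - \Delta_q(\A)$ is unbounded when $\A$ is unbounded, the identity \eqref{resolv.eq. for Q} should be read as: for every $x \in X$, $\Q_q(\A)x \in D(\A^2) \subset D(\Delta_p(\A)-\Delta_q(\A))$, and the displayed equality of elements of $X$ holds. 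I would state this explicitly. Everything else is the same two-line trick as in the complex case, and the real-coefficient nature of $\Delta_q(\A) - \Delta_p(\A)$ is what makes both the left- and right-factored versions available with no further work.
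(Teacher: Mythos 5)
Your core two-line derivation is correct and is, in substance, the same idea as the paper's own proof: insert $\Delta_p(\A)$, split it as $(\Delta_p(\A)-\Delta_q(\A)) + \Delta_q(\A)$, and cancel using $\Q_p(\A)\Delta_p(\A) = \Id$ on $D(\A^2)$ together with the observation that $\Q_q(\A)$ maps $X$ into $D(\A^2)$; the domain bookkeeping you flag is exactly the point that makes the classical computation legitimate here. The one place you diverge in technique is in passing from the sandwiched form to the outer-factored one. You first obtain $\Q_p(\A)\big(\Delta_q(\A)-\Delta_p(\A)\big)\Q_q(\A)$ and then move $\Delta_q(\A)-\Delta_p(\A)$ through $\Q_p(\A)$, which rests on the commutation $\A\Q_p(\A)=\Q_p(\A)\A$ on $D(\A)$. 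The paper instead writes $\Q_q(\A)=\Delta_p(\A)\Q_p(\A)\Q_q(\A)$, splits $\Delta_p(\A)$, and uses only the bounded commutation $\Q_p(\A)\Q_q(\A)=\Q_q(\A)\Q_p(\A)$, so the unbounded factor never has to be pushed past a $\Q$. Both commutation facts are correct; this is a minor variation in packaging, not a different strategy.

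There is, however, a sign point you should not gloss over. Your computation produces $\Q_p(\A)-\Q_q(\A)=\big(\Delta_q(\A)-\Delta_p(\A)\big)\Q_p(\A)\Q_q(\A)$, with outer coefficient $\Delta_q-\Delta_p$. This matches the scalar model $a^{-1}-b^{-1}=(b-a)a^{-1}b^{-1}$ and also matches the final line of the paper's own proof, which likewise ends at $(\Delta_q(\A)-\Delta_p(\A))\Q_p(\A)\Q_q(\A)$. The displayed equation \eqref{resolv.eq. for Q} in the statement, on the other hand, has $\Delta_p(\A)-\Delta_q(\A)$ on the outer left, i.e.\ the opposite sign; the statement and its own proof disagree, and it is the statement that is off. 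Your remark about running a ``symmetric computation'' to obtain $\Delta_p(\A)-\Delta_q(\A)$ on the outside is therefore chasing a misprint: applying $\Delta_p$ to $\Q_q x$ and then $\Q_p$, or applying $\Delta_q$ to $\Q_p x$ and then $\Q_q$, or swapping the roles of $p$ and $q$, all give $\Delta_q-\Delta_p$ as the outer coefficient, and no correct rearrangement flips it. Trust your derivation and record the coefficient as $\Delta_q(\A)-\Delta_p(\A)$.
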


\begin{proof}
In order to shorten the notation, we 
omit the dependence on $\A$ of the various operators. We have that
\begin{align}\label{resolv.eq. for Q-a}
  \Q_p - \Q_q 
    & = \Q_p - \Delta_p \Q_p \Q_q \notag \\
    & = \Q_p - (\Delta_p - \Delta_q + \Delta_q) \Q_p \Q_q \notag \\ 
    & = \Q_p - (\Delta_p - \Delta_q)\Q_p \Q_q - \Delta_q \Q_p \Q_q. 
\end{align}
By definition \eqref{def delta}, we have that $\Delta_p$ and $\Delta_q$ commute, therefore $\Q_p\Q_q = \Q_q\Q_p$ and from \eqref{resolv.eq. for Q-a} we infer that
\begin{align}
  \Q_p - \Q_q 
    & = \Q_p - (\Delta_p - \Delta_q)\Q_p \Q_q - \Delta_q \Q_q \Q_p \notag \\
    & = \Q_p - (\Delta_p - \Delta_q)\Q_p \Q_q - \Q_p \notag \\
    & = (\Delta_q - \Delta_p)\Q_p \Q_q.  \notag
\end{align}
This completes the proof.
\end{proof}

\begin{Lem}\label{L:Cq-Cp-step1}
Assume that \eqref{X} holds. If $D(\A)$ is a right $\quat$-submodule of $X$ and $\A : D(\A) \funzione X$ is a closed right linear operator, then
\begin{equation}\label{Cq-Cp-step1}
  \Sp(\A) - \Sq(\A) =  \Q_q(\A)\Big[(\pbar - \qbar)\Id + \big(\Delta_q(\A) - \Delta_p(\A)\big)\Sp(\A)\Big] 
  \qquad \forall p, q \in \rho_\s(\A).
\end{equation}
\end{Lem}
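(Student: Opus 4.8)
The plan is to substitute the definition \eqref{resolvent operator} of the $\Sres$-resolvent operators into the left-hand side of \eqref{Cq-Cp-step1}, peel off the scalar term $\Q_q(\A)(\pbar-\qbar)$ by adding and subtracting $\Q_q(\A)\pbar$, and then recognize the remainder as $(\pbar\Id-\A)(\Q_p(\A)-\Q_q(\A))$. Feeding in the resolvent equation of Proposition~\ref{P:resolv.eq. for Q} for $\Q_p(\A)-\Q_q(\A)$ and exploiting that $\Q_q(\A)$ --- being the inverse of a polynomial in $\A$ with \emph{real} coefficients --- commutes with $\A$, with $\Q_p(\A)$, and with $\Delta_q(\A)-\Delta_p(\A)$, one then pulls $\Q_q(\A)$ out to the far left and reassembles $\Sp(\A)=\Q_p(\A)\pbar-\A\Q_p(\A)$ on the right, which is exactly \eqref{Cq-Cp-step1}.

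Concretely, from \eqref{resolvent operator} one writes
\[
\Sp(\A)-\Sq(\A)=\big(\Q_p(\A)-\Q_q(\A)\big)\pbar+\Q_q(\A)(\pbar-\qbar)-\A\big(\Q_p(\A)-\Q_q(\A)\big),
\]
that is, $\Sp(\A)-\Sq(\A)=\Q_q(\A)(\pbar-\qbar)\Id+(\pbar\Id-\A)\big(\Q_p(\A)-\Q_q(\A)\big)$. Now Proposition~\ref{P:resolv.eq. for Q}, together with the identity $\Q_p(\A)\Q_q(\A)=\Q_q(\A)\Q_p(\A)$ established in its proof, gives $\Q_p(\A)-\Q_q(\A)=\big(\Delta_q(\A)-\Delta_p(\A)\big)\Q_q(\A)\Q_p(\A)$. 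Since $\Delta_q(\A)-\Delta_p(\A)=(|q|^2-|p|^2)\Id-2(\re(q)-\re(p))\A$ is a polynomial in $\A$ with real coefficients, it commutes with $\A$; and $\Q_q(\A)$ commutes with $\A$, hence with $\Delta_q(\A)-\Delta_p(\A)$, and with $\Q_p(\A)$. Carrying $\Q_q(\A)$ to the left through all these factors and recombining $\Q_p(\A)\pbar-\A\Q_p(\A)=\Sp(\A)$ yields $(\pbar\Id-\A)\big(\Q_p(\A)-\Q_q(\A)\big)=\Q_q(\A)\big(\Delta_q(\A)-\Delta_p(\A)\big)\Sp(\A)$, and substituting this back gives \eqref{Cq-Cp-step1}.

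The main point requiring care is the interplay with non-commutativity and with the unboundedness of $\A$. The quaternion $\pbar$ is not real, so it does not commute with $\A$; the argument must be arranged so that $\pbar$ is never commuted past $\A$ --- it only ever rides along as a fixed right factor --- and every commutation actually invoked involves solely $\A$, real scalars, and the operators $\Q_p(\A),\Q_q(\A)$, which are inverses of real polynomials in $\A$ and therefore commute with $\A$, and with each other, on the relevant domains. Likewise, although $\Sp(\A)$ need not map $X$ into $D(\A^2)$, the composite $\big(\Delta_q(\A)-\Delta_p(\A)\big)\Sp(\A)$ is still everywhere defined because $\Delta_q(\A)-\Delta_p(\A)$ is only first order in $\A$; this is precisely why \eqref{Cq-Cp-step1} groups $\Delta_q(\A)-\Delta_p(\A)$ together rather than keeping the operators $\Delta_q(\A)$ and $\Delta_p(\A)$ separate.
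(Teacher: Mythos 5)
Your overall strategy matches the paper's: expand the definitions, add and subtract $\Q_q(\A)\pbar$ to isolate $\Q_q(\A)(\pbar-\qbar)$, substitute the $\Q$-resolvent equation of Proposition~\ref{P:resolv.eq. for Q}, then use the fact that $\Q_q(\A)$ commutes with $\A$ and with $\Delta_p(\A)-\Delta_q(\A)$ (all being real-coefficient polynomials, or inverses of such, in $\A$) to pull $\Q_q(\A)$ to the far left, and finally reassemble $\Q_p(\A)\pbar-\A\Q_p(\A)=\Sp(\A)$. The cautionary remarks you append (never commute $\pbar$ past $\A$; $(\Delta_q(\A)-\Delta_p(\A))\Sp(\A)$ is everywhere defined because $\Delta_q(\A)-\Delta_p(\A)$ is only first order in $\A$) are both correct and both relevant.

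There is, however, a genuine slip in the displayed intermediate identity
\[
(\Q_p(\A)-\Q_q(\A))\pbar-\A(\Q_p(\A)-\Q_q(\A))\;=\;(\pbar\Id-\A)(\Q_p(\A)-\Q_q(\A)).
\]
Under the paper's convention \eqref{operatore per scalare a dx}, for a right-linear $\B$ the operator $\B\pbar$ is $x\longmapsto\B(\pbar x)$ (precomposition with left multiplication by $\pbar$), whereas $(\pbar\Id)\B$, read as an operator composition, is $x\longmapsto\pbar\,\B(x)$ (postcomposition). These agree only if $\B$ intertwines left multiplication by $\pbar$, which $\Q_p(\A)-\Q_q(\A)$ does not, since $\A$ is merely right linear. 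The very distinction between the left $\Sres$-resolvent $\Sq(\A)=\Q_q(\A)\qbar-\A\Q_q(\A)$ and the right one $\S_{R,q}^{{}^{-1}}(\A)=(\qbar\Id-\A)\Q_q(\A)$ in \eqref{left sph res op intro}--\eqref{righ sph res op intro} is exactly this distinction between the two ways of attaching the scalar, so accepting your factorization would in particular collapse them. For the same reason the announced step of ``carrying $\Q_q(\A)$ to the left through all these factors'' is not available: $\Q_q(\A)$ commutes with $\A$, with $\Delta_p(\A)-\Delta_q(\A)$ and with $\Q_p(\A)$, but it does \emph{not} commute with $\pbar\Id$ (that would again require left linearity). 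So, taken literally, your intermediate identity $(\pbar\Id-\A)(\Q_p(\A)-\Q_q(\A))=\Q_q(\A)(\Delta_q(\A)-\Delta_p(\A))\Sp(\A)$ is false, and the commutation you invoke to obtain it is not permitted.

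The remedy is simply not to factor: keep the remainder as $(\Q_p(\A)-\Q_q(\A))\pbar-\A(\Q_p(\A)-\Q_q(\A))$, substitute $\Q_p(\A)-\Q_q(\A)=\big(\Delta_q(\A)-\Delta_p(\A)\big)\Q_q(\A)\Q_p(\A)$, commute $\Q_q(\A)$ to the far left past $\A$ and past $\Delta_q(\A)-\Delta_p(\A)$ while leaving $\pbar$ riding on the right of $\Q_p(\A)$ throughout, and only at the end recombine $\Q_p(\A)\pbar-\A\Q_p(\A)=\Sp(\A)$. Since your last line does exactly this recombination in the correct form, you evidently carried out this computation mentally; the two displayed ``$(\pbar\Id-\A)(\Q_p(\A)-\Q_q(\A))$'' equations should simply be struck, as they misstate the operator algebra in a way the paper's own proof is careful to avoid.
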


\begin{proof}
We set $\Delta_p := \Delta_p(\A)$, $\Sp := \Sp(\A)$, 
$\Delta_q := \Delta_q(\A)$ and $\Sq := \Sq(\A)$. Starting from definition \eqref{resolvent operator} and using \eqref{resolv.eq. for Q}, we infer that 
\begin{align}
  \Sp - \Sq 
    & = (\Q_p\pbar - \A\Q_p) - (\Q_q\qbar - \A\Q_q) \notag \\
    & = \Q_p\pbar - \Q_q\qbar - \A(\Q_p - \Q_q) \notag \\
    & = \Q_p\pbar - \Q_q\pbar + \Q_q\pbar - \Q_q\qbar - \A(\Q_p - \Q_q) \notag \\
    & = (\Q_p - \Q_q)\pbar + \Q_q(\pbar - \qbar) - \A(\Q_p - \Q_q) \notag \\
    & = (\Delta_p - \Delta_q)\Q_p \Q_q\pbar + \Q_q(\pbar - \qbar) - \A(\Delta_p - \Delta_q)\Q_p \Q_q\,. 
 \label{Cq-Cp-step1-a}
\end{align}
The operators $\Q_q$ and $\A$ commute on $D(\A)$ for every $q \in \rho_\s(A)$ (cf., e.g., \cite[Proposition 2.28]{GhiRec16}, where it is assumed the non necessary hypothesis that $D(\A)$ is dense in $X$), hence $\Q_q$ and $\Delta_q$ commute on $D(\A^2)$. Therefore, using Proposition \ref{P:resolv.eq. for Q} and \eqref{Cq-Cp-step1-a}, we obtain:
\begin{align}
  \Sp - \Sq 
    & = (\Delta_p - \Delta_q)\Q_q \Q_p\pbar + \Q_q(\pbar - \qbar) - \A(\Delta_p - \Delta_q)\Q_q \Q_p 
           \notag \\
    & = \Q_q(\Delta_q - \Delta_p) \Q_p\pbar  + \Q_q(\pbar - \qbar) - \Q_q(\Delta_q - \Delta_p) \A \Q_p 
          \notag \\
    & = \Q_q(\pbar - \qbar) + \Q_q(\Delta_q - \Delta_p)(\Q_p\pbar - \A\Q_p) \notag \\
    & = \Q_q[(\pbar - \qbar)\Id + (\Delta_q - \Delta_p)\Sp]\,, \notag
\end{align}
as required.
\end{proof}

\begin{Lem}\label{L:(Dq - Dp)Cp=}
Assume that \eqref{X} holds. If $D(\A)$ is a right $\quat$-submodule of $X$ and $\A : D(\A) \funzione X$ is a closed right linear operator, then
\begin{equation}\label{(Dq - Dp)Cp=}
  (\pbar - \qbar)\Id + \big(\Delta_q(\A) - \Delta_p(\A)\big)\Sp(\A) = 
  (q - p)\Id + \Sp(\A)\triangle_q(p)
  \quad \forall p, q \in \rho_\s(\A).
\end{equation}
\end{Lem}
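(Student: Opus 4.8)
The plan is to deduce \eqref{(Dq - Dp)Cp=} from a short algebraic manipulation, preceded by two observations that explain why the operator on the left and the quaternion $\triangle_q(p)$ on the right are forced to agree. The first is that the quadratic terms in \eqref{def delta} drop out of the difference, so that
\[
  \Delta_q(\A)-\Delta_p(\A)=2(\re(p)-\re(q))\,\A+(|q|^2-|p|^2)\Id.
\]
The second is that every quaternion solves its own ``characteristic equation'', namely $\triangle_p(p)=p^2-2\re(p)p+|p|^2=0$ by \eqref{triangle_q}; hence $\triangle_q(p)=\triangle_q(p)-\triangle_p(p)=2(\re(p)-\re(q))\,p+(|q|^2-|p|^2)$ is affine in $p$ with exactly the same real coefficients $2(\re(p)-\re(q))$ and $|q|^2-|p|^2$ that occur in $\Delta_q(\A)-\Delta_p(\A)$.

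With these at hand I would substitute $\Sp(\A)=\Q_p(\A)\pbar-\A\Q_p(\A)$ (cf.\ \eqref{resolvent operator}) into both sides of \eqref{(Dq - Dp)Cp=}, using the convention \eqref{operatore per scalare a dx}, by which a quaternionic factor to the right of a right linear operator $\B$ acts on the input, $(\B c)(x)=\B(cx)$, and using that $\A$ commutes with $\Q_p(\A)$. On the left this produces, besides the scalar term $(\pbar-\qbar)\Id$, terms in $\A\Q_p(\A)$ and $\A^2\Q_p(\A)$; these compositions make sense on all of $X$ because $\Q_p(\A)$ maps $X$ onto $D(\A^2)$, and every occurrence of $\A^2\Q_p(\A)$ is removed via $\Delta_p(\A)\Q_p(\A)=\Id$, i.e.\ $\A^2\Q_p(\A)=\Id+2\re(p)\,\A\Q_p(\A)-|p|^2\,\Q_p(\A)$. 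Expanding the right-hand side in the same way (using $\pbar p=|p|^2$ and the reduced form of $\triangle_q(p)$) and collecting, the identity reduces to two cancellations: the purely scalar part collapses to $(q-p)\Id$ because $\pbar-2\re(p)=-p$ and $-\qbar+2\re(q)=q$; and the remaining first-order terms cancel because $\A\Q_p(\A)(\pbar x)+\A\Q_p(\A)(px)=\A\Q_p(\A)((\pbar+p)x)=2\re(p)\,\A\Q_p(\A)(x)$, which is $\pbar+p=2\re(p)$ together with additivity of $\A\Q_p(\A)$.

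A slightly slicker variant bypasses the explicit expansion. The left $\Sres$-resolvent equation $\A\Sp(\A)=\Sp(\A)p-\Id$, valid for $p\in\rho_\s(\A)$ (and itself a two-line consequence of \eqref{resolvent operator}, $\Delta_p(\A)\Q_p(\A)=\Id$ and $\pbar+p=2\re(p)$), turns the first observation into the identity
\[
  (\Delta_q(\A)-\Delta_p(\A))\Sp(\A)=2(\re(p)-\re(q))\,\Sp(\A)p+(|q|^2-|p|^2)\Sp(\A)-2(\re(p)-\re(q))\Id.
\]
Since $c\longmapsto\Sp(\A)c$ is $\erre$-linear from $\quat$ to $\Lin^\r(X)$, the first two terms on the right equal $\Sp(\A)\bigl(2(\re(p)-\re(q))p+(|q|^2-|p|^2)\bigr)=\Sp(\A)\triangle_q(p)$ by the second observation, and adding $(\pbar-\qbar)\Id$ turns the remaining scalar operator into $(q-p)\Id$ as before.

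I expect the main difficulty to be bookkeeping rather than substance: one must keep quaternionic factors acting on the input side (convention \eqref{operatore per scalare a dx}) cleanly separated from the action of the operators themselves, and must make sure that $\A\Q_p(\A)$, $\A^2\Q_p(\A)$ and $\A\Sp(\A)$ are defined on all of $X$, which is exactly what the inclusion of the range of $\Q_p(\A)$ in $D(\A^2)$ supplies. The conceptual reason the lemma is true is the structural parallel highlighted above: $\triangle_q(p)-\triangle_p(p)$ is affine in $p$ with precisely the same real coefficients as $\Delta_q(\A)-\Delta_p(\A)$ is in $\A$.
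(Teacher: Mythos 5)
Your ``slicker variant'' is essentially the paper's proof: both use $\Delta_q(\A)-\Delta_p(\A)=2\re(p-q)\,\A+(|q|^2-|p|^2)\Id$, substitute $\A\Sp(\A)=\Sp(\A)p-\Id$ (the paper cites \cite[Lemma 2.29]{GhiRec16} rather than rederiving it), factor out $\Sp(\A)$ on the left of the quaternionic coefficients, and simplify the scalar part via $\pbar-2\re(p)=-p$. Your observation that $\triangle_p(p)=0$ explains the appearance of $\triangle_q(p)$ is a nice conceptual gloss on the paper's direct computation $2\re(p)p=p^2+|p|^2$, but the underlying manipulation is the same.
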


\begin{proof}
We set $\Delta_p := \Delta_p(\A)$, $\Sp := \Sp(\A)$, 
$\Delta_q := \Delta_q(\A)$ and $\Sq := \Sq(\A)$. We have:
\begin{align}
  & (\pbar - \qbar)\Id + (\Delta_q - \Delta_p)\Sp \notag \\
    & = (\pbar - \qbar)\Id + [2\re(p-q)\A + (|q|^2 - |p|^2)\Id]\Sp \notag \\
    & = (\pbar - \qbar)\Id + 2\re(p-q)\A \Sp + (|q|^2 - |p|^2)\Sp. \label{(Dq - Dp)Cp=-a}
\end{align}
Since 
\[
\A\Sp = \Sp p-\Id
\]
(cf., e.g., \cite[Lemma 2.29]{GhiRec16}, where it is supposed the non necessary assumption that $D(\A)$ is dense in $X$), \eqref{(Dq - Dp)Cp=-a} implies
\begin{align}
  & (\pbar - \qbar)\Id + (\Delta_q - \Delta_p)\Sp \notag \\
    & = (\pbar - \qbar)\Id + 2\re(p-q)(\Sp p - \Id) + (|q|^2 - |p|^2)\Sp \notag \\
    & = (\pbar - \qbar - 2\re(p-q))\Id + 2\re(p-q)\Sp p + (|q|^2 - |p|^2)\Sp \notag \\
    & = (\pbar - \qbar - 2\re(p-q))\Id + \Sp(2\re(p-q)p + |q|^2 - |p|^2) \notag \\
    & = (q - p)\Id + \Sp(2\re(p)p-2\re(q)p + |q|^2 - |p|^2) \notag \\
    & = (q - p)\Id + \Sp(p^2+|p|^2 -2\re(q)p+ |q|^2 - |p|^2) \notag \\
    & = (q - p)\Id + \Sp(p^2 -2\re(q)p+ |q|^2)=(q - p)\Id + \Sp\triangle_q(p),\notag
\end{align}
as required.
\end{proof}

\begin{Lem}\label{AQ Lin}
$\A\Q_q(\A) \in \Lin^\r(X)$ for every $q \in \rho_\s(\A)$.
\end{Lem}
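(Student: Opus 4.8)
The plan is to deduce boundedness of $\A\Q_q(\A)$ from the closed graph theorem, after observing that $\A\Q_q(\A)$ is an everywhere-defined right linear operator with closed graph. First I would record the elementary facts: for $q\in\rho_\s(\A)$, the operator $\Q_q(\A)=\Delta_q(\A)^{-1}$ belongs to $\Lin^\r(X)$ by the very definition of $\rho_\s(\A)$, and it maps $X$ bijectively onto $D(\A^2)$; since $D(\A^2)\subseteq D(\A)$, the composition $\A\Q_q(\A):X\funzione X$ is defined on all of $X$. Right linearity is routine: $D(\A^2)$ is a right $\quat$-submodule of $X$ (if $x\in D(\A^2)$ and $q'\in\quat$ then $xq'\in D(\A)$ and $\A(xq')=\A(x)q'\in D(\A)$, so $xq'\in D(\A^2)$), hence $\A\Q_q(\A)(xq')=\A\big(\Q_q(\A)(x)q'\big)=\big(\A\Q_q(\A)(x)\big)q'$ using right linearity of $\Q_q(\A)$ and of $\A$.

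Next I would check that $\A\Q_q(\A)$ has closed graph. Suppose $(x_n)$ is a sequence in $X$ with $x_n\to x$ and $\A\Q_q(\A)x_n\to y$. Since $\Q_q(\A)\in\Lin^\r(X)$ is continuous, $\Q_q(\A)x_n\to\Q_q(\A)x$; and by hypothesis $\A\big(\Q_q(\A)x_n\big)\to y$. As $\A$ is closed, we conclude $\Q_q(\A)x\in D(\A)$ (automatic here) and $\A\Q_q(\A)x=y$, which is exactly closedness of the graph of $\A\Q_q(\A)$.

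To finish, I would invoke the closed graph theorem. The point is that $(X,\norma{\cdot}{})$ is in particular a real Banach space — the $\quat$-norm restricted to real scalars satisfies $\norma{rx}{}=|r|\norma{x}{}$ for $r\in\erre$ and $X$ is complete — and every right linear operator on $X$ is in particular additive and $\erre$-linear (using $rx=xr$ and right linearity). Thus $\A\Q_q(\A)$ is a closed, everywhere-defined $\erre$-linear operator on a real Banach space, hence bounded by the classical closed graph theorem; since the operator norm $\sup\{\norma{\B x}{}:\norma{x}{}\le1\}$ is insensitive to whether one regards $\B$ as real or quaternionic linear, this gives $\norma{\A\Q_q(\A)}{}<\infty$, i.e. $\A\Q_q(\A)\in\Lin^\r(X)$.

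I expect the only point requiring care to be the legitimacy of applying the closed graph theorem in the quaternionic-module setting, which is resolved precisely by the remark that $X$ is underlyingly a real Banach space and all operators in sight are $\erre$-linear; everything else is a direct verification. As an immediate byproduct, combined with $\Q_q(\A)\in\Lin^\r(X)$ and $\Sq(\A)=\Q_q(\A)\qbar-\A\Q_q(\A)$, this also yields $\Sq(\A)\in\Lin^\r(X)$ for every $q\in\rho_\s(\A)$, which is the first assertion of Theorem~\ref{T:resolv.eq.}.
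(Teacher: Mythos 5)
Your argument is correct and matches the paper's own proof: regard $X$ as a real Banach space via \eqref{eq:homog}, observe that $\A\Q_q(\A)$ is everywhere defined with closed graph (using continuity of $\Q_q(\A)$ and closedness of $\A$), and conclude boundedness by the closed graph theorem. The only difference is that you spell out the closed-graph verification and the $\erre$-linearity remarks in more detail, which the paper leaves implicit.
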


\begin{proof}
By definition, we have that $\Q_q(\A) \in \Lin^\r(X)$. Endow $(X,+)$ with the real scalar multiplication 
$\erre \times X \funzione X : (r,x) \longmapsto rx = xr$. Thanks to \eqref{eq:homog}, $X$ can be regarded as a real Banach space, so that $\A$ is a closed $\erre$-linear operator on it and an application of the closed graph theorem implies that the right linear operator $\A\Q_q(\A)$ is continuous.
\end{proof}

We conclude this section with the proof of our first main result.

\begin{proof}[Proof of Theorem \ref{T:resolv.eq.}]
Equation \eqref{eq:qre} immediately follows by combining Lemmas \ref{L:Cq-Cp-step1} and \ref{L:(Dq - Dp)Cp=}. Moreover, by Lemma \ref{AQ Lin} and definition \eqref{resolvent operator}, we deduce that $\Sq(\A) \in \Lin^\r(X)$.
\end{proof}


\section{Proof of Theorem \ref{T:main} and its corollaries} \label{S:Proof of of the expansion}

\emph{Throughout this section, we assume that \eqref{X} holds.}

Let us start by recalling the notion of vector-valued right slice regular function in the sense of \cite[Definition 4.2]{GhiRec18}.

\begin{Def}\label{D:Xslice regularity}
For every $\j \in \mathbb{S}$, let $X_\j$ denote the abelian group $X$ endowed with the structure of complex Banach space induced by the operation $\ci \times X \funzione X : (z,x) \longmapsto zx$ defined by
\[
  zx := x(r+s\j) \qquad x \in X,\ z \in \ci, \ z = r+si,\ r,s \in \erre.
\]

Let $D$ be a non-empty open subset of $\ci$ invariant under the complex conjugation and let $\OO_D$ be its circularization in $\quat$, that is, $\OO_D$ is the non-empty open subset of $\quat=\erre^4$ defined by
$$
\OO_D := \{q \in \quat:q = r + s\j,\ r, s \in \erre,\ \j \in \mathbb{S},\ r+si \in D \}.
$$

A mapping $F=(F_1,F_2):D\to X^2$ is called \emph{stem function} if $F_1(r-si)=F_1(r+si)$ and $F_2(r-si)=-F_2(r+si)$ for every $r+si\in D$ with $r,s\in\erre$.

Let $f:\OO_D\funzione X$ be a function. For every $\j \in \mathbb{S}$, define the function $f_\j : D \funzione X_\j$ as $f_\j(r+si):=f(r+s\j)$ for every $r,s\in\erre$ with $r+si\in D$. The function $f:\OO_D\funzione X$ is said to be \emph{right slice regular} if there exists a stem function $F=(F_1,F_2):D\funzione X^2$ such that:
\begin{itemize}
 \item[$(\mr{i})$] $F_1$ and $F_2$ are of class $C^1$ (in the usual real 
 sense);
 \item[$(\mr{ii})$] $f_\j(z)=F_1(z)+F_2(z)\j\qquad\forall z\in D,\;\forall\j\in\mathbb{S}$;
 \item[$(\mr{iii})$] $f_\j$ is holomorphic for every $\j\in\mathbb{S}$, that is,
 \begin{equation}\label{equa1}
  \frac{\partial f_\j}{\partial r}(z) +i\frac{\partial f_\j}{\partial s}(z) =\frac{\partial f_\j}{\partial r}(z) + \frac{\partial f_\j}{\partial s}(z)\j=0\qquad\forall z\in D,\;\forall\j\in\mathbb{S}\,.
\end{equation}
As $0=\frac{\partial f_\j}{\partial r}+\frac{\partial f_\j}{\partial s}\j=\big(\frac{\partial F_1}{\partial r}-\frac{\partial F_2}{\partial s}\big)+\big(\frac{\partial F_1}{\partial s}+\frac{\partial F_2}{\partial r}\big)\j$ for every $\j\in\mathbb{S}$, the holomorphy conditions \eqref{equa1} for the $f_\j$ are equivalent to the following Cauchy-Riemann equation for~$F$:
\begin{equation}\label{equa2}
\frac{\partial F_1}{\partial r}=\frac{\partial F_2}{\partial s}
\quad\text{ and }\quad
\frac{\partial F_1}{\partial s}=-\frac{\partial F_2}{\partial r} \quad\text{on $D$}\,.
\end{equation}
\end{itemize}
In this situation, we say that $f$ is induced by $F$.
\end{Def}

\begin{Rem}\label{rem}
$(\mr{i})$ We have $f(q)=F_1(q)$ for every $q\in\OO_D\cap\erre=D\cap\erre$. Thus, if $\OO_D\cap\erre\neq\vuoto$, the restriction of $f$ to $\OO_D\cap\erre$ is of class $C^1$ and, for every $q\in\OO_D\cap\erre$, the derivative $\frac{\partial f}{\partial r}(q)$ of $f$ at $q$ in the direction of the real axis $\erre$ makes sense and equals $\frac{\partial F_1}{\partial r}(q)$.

$(\mr{ii})$ The open subset $\OO_D$ of $\quat$ is axially symmetric in the sense that, if $q=r+s\j\in\OO_D$ for some $r,s\in\erre$ and $\j\in\mathbb{S}$, then the whole $2$-sphere $r+s\mathbb{S}$ is contained in $\OO_D$. \hfill {\tiny$\blacksquare$} 
\end{Rem}

The concept of vector-valued (left) slice regularity was first introduced in 
\cite[Definition 3.4]{AlpColLewSab15} as \emph{slice hyperholomorphicity}, see also 
\cite[Theorem 3.6]{AlpColLewSab15}. Bearing in mind Proposition \ref{char-Y-holom} in the Appendix, 
the definition of vector-valued right slice hyperholomorphic function reads as follows. Let $\OO$ be a (not necessarily axially symmetric) non-empty open subset $\OO$ of $\quat$, and let $f:\OO\funzione X$ be a function. For every $\j\in\mathbb{S}$ with $\OO\cap\ci_\j\neq\vuoto$, let $\psi_\j:\ci\funzione\ci_\j$ be the field isomorphism $\psi_\j(r+is):=r+s\j$, let $D_\j:=\psi_\j^{-1}(\OO\cap\ci_\j)$ and let $f_\j:D_\j\funzione X$ be the function $f_\j(r+is):=f(\psi_\j(r+is))=f(r+s\j)$. The function $f:\OO\funzione X$ is called \emph{right slice hyperholomorphic} if, for every $\j\in\mathbb{S}$ with $\OO\cap\ci_\j\neq\vuoto$, the function $\OO\cap\ci_\j\funzione X_\j:r+s\j\longmapsto f(r+s\j)$ is holomorphic, that is, it is of class $C^1$ (in the real sense) and $\frac{\partial f_\j}{\partial r}+\frac{\partial f_\j}{\partial s}\j=0$ for every $r+is\in D_\j$ and $\j\in\mathbb{S}$. As we see in the next result, if $\OO=\OO_D$ with $D$ connected and $D\cap\erre\neq\vuoto$, the right slice hyperholomorphicity coincides with the right slice regularity defined in Definition~\ref{D:Xslice regularity}.  

\begin{Lem}\label{equivalence}
Let $D$ be a non-empty connected open subset of $\ci$ invariant under the complex conjugation such that $D\cap\erre\neq\vuoto$, and let $f:\OO_D\funzione X$. Then, $f$ is right slice regular if and only if it is right slice hyperholomorphic, that is, if and only if the only condition \eqref{equa1} of Definition \ref{D:Xslice regularity} is satisfied.
\end{Lem}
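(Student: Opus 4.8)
The plan is to show the two implications of Lemma~\ref{equivalence} separately, the forward one being essentially formal and the reverse one being the substantive part. For the forward direction, suppose $f:\OO_D\funzione X$ is right slice regular in the sense of Definition~\ref{D:Xslice regularity}, induced by a stem function $F=(F_1,F_2)$. Fix $\j\in\mathbb{S}$. Since $\OO_D$ is axially symmetric (Remark~\ref{rem}(ii)) and $D\cap\erre\neq\vuoto$, we have $\OO_D\cap\ci_\j\neq\vuoto$, and the map $\psi_\j^{-1}(\OO_D\cap\ci_\j)$ equals $D$ exactly when $\ci_\j$ meets both half-planes; in general it is $D$ up to the identification $r+is\leftrightarrow r+s\j$ together with its conjugate, but condition $(\mr{ii})$ of Definition~\ref{D:Xslice regularity} gives $f_\j(z)=F_1(z)+F_2(z)\j$ for all $z\in D$, so the two notions of $f_\j$ agree on $D$. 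Condition $(\mr{iii})$ of Definition~\ref{D:Xslice regularity} is literally the holomorphy requirement in the definition of right slice hyperholomorphicity, so $f$ is right slice hyperholomorphic. Here one only needs to be a little careful that the ``$f_\j$'' appearing in the two definitions are parametrized consistently; this is where connectedness of $D$ and $D\cap\erre\neq\vuoto$ enter, guaranteeing $\ci_\j\cap\OO_D$ is the full circularized slice and not some proper piece.

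For the reverse direction, assume $f:\OO_D\funzione X$ is right slice hyperholomorphic, i.e.\ $f_\j$ is of class $C^1$ and satisfies $\frac{\partial f_\j}{\partial r}+\frac{\partial f_\j}{\partial s}\j=0$ for every $\j\in\mathbb{S}$; equivalently the only condition \eqref{equa1} holds. I must produce a stem function $F=(F_1,F_2):D\funzione X^2$ of class $C^1$ with $F_1(\bar z)=F_1(z)$, $F_2(\bar z)=-F_2(z)$, satisfying $(\mr{ii})$ and hence, automatically, the Cauchy--Riemann system \eqref{equa2}. The natural candidate is the symmetrization: for $z=r+is\in D$ pick any $\j\in\mathbb{S}$ and set
\[
F_1(z):=\tfrac12\big(f(r+s\j)+f(r-s\j)\big),\qquad
F_2(z):=\tfrac12\big(f(r+s\j)-f(r-s\j)\big)\j^{-1},
\]
noting $\j^{-1}=-\j$. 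One checks that these expressions are independent of the choice of $\j\in\mathbb{S}$: if $q=r+s\j$ lies on the $2$-sphere $r+s\mathbb{S}$, then using right slice hyperholomorphicity (holomorphy of $f$ restricted to each $\ci_\j$) together with the standard representation/identity principle for slice regular functions — or, more self-containedly, by showing that the restriction of $z\mapsto f(r+s\j)$ to the sphere is affine in $\j$ — the combinations above depend only on $r,s$. Then $(\mr{ii})$ and the conjugation symmetries are immediate from the definitions, $C^1$-regularity of $F_1,F_2$ follows from that of the $f_\j$ plus the smooth dependence on the sphere parameter, and the Cauchy--Riemann equations \eqref{equa2} follow because, as computed in the excerpt right after \eqref{equa1}, they are equivalent to the holomorphy of the $f_\j$, which is our hypothesis.

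The main obstacle is precisely the well-definedness of $F_1,F_2$, i.e.\ proving that $f$ automatically satisfies the \emph{representation formula} $f(r+s\j) = F_1(r+is)+F_2(r+is)\j$ with $F_1,F_2$ independent of $\j$ — in other words, that holomorphy on \emph{each} complex slice $\ci_\j$ forces the values on a $2$-sphere to be affine in the imaginary unit. The clean way is to invoke Proposition~\ref{char-Y-holom} in the Appendix (referenced in the excerpt just before this lemma), which is exactly the scalar-to-vector transfer of the slice machinery; granting that, the argument is the symmetrization computation above. One extra point worth spelling out: connectedness of $D$ and $D\cap\erre\neq\vuoto$ are used to guarantee that the stem function built slice-by-slice is consistent across slices and that one is genuinely in the ``axially symmetric, real-point-containing'' setting where Definition~\ref{D:Xslice regularity} applies; without $D\cap\erre\neq\vuoto$ the two notions can genuinely differ. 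I would close by remarking that, conversely, this lemma lets us freely use either description of slice regularity in the sequel — in particular when differentiating the resolvent series — which is the reason it is stated here.
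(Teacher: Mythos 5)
Your candidate stem function
\[
F_1^{\k}(z)=\tfrac12\bigl(f_\k(z)+f_\k(\overline z)\bigr),\qquad
F_2^{\k}(z)=-\tfrac12\bigl(f_\k(z)-f_\k(\overline z)\bigr)\k
\]
is exactly the one the paper uses, and you correctly identify that the crux is showing it is independent of $\k$ (equivalently, that $f$ satisfies the representation formula on each sphere $r+s\mathbb{S}$). But you do not actually prove this, and the two escape routes you propose do not close the gap. Invoking ``the standard representation/identity principle for slice regular functions'' is circular: that principle is precisely the assertion you are trying to establish for $f$, and at this point of the argument you only know that $f$ is slice hyperholomorphic, not slice regular. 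Appealing to Proposition~\ref{char-Y-holom} does not help either: that proposition concerns a vector-valued function of a \emph{single complex variable} (equivalence of differentiability, Cauchy--Riemann, analyticity) and says nothing about how the values of $f$ vary as $\j$ ranges over $\mathbb{S}$, which is what the representation formula is about. And ``showing that $\j\mapsto f(r+s\j)$ is affine'' is stated as a goal, not proved.

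The paper avoids trying to prove well-definedness directly. It fixes one $\k\in\mathbb{S}$, forms $h^{\k}(r+s\j):=F_1^{\k}(r+si)+F_2^{\k}(r+si)\j$, and then: (a) verifies by an explicit computation (using $f_\k(\overline z)=f_{-\k}(z)$ and the holomorphy of $f_\k$ and $f_{-\k}$) that $h^{\k}_\j$ is holomorphic for \emph{every} $\j\in\mathbb{S}$; (b) notes that $h^{\k}_\j$ and $f_\j$ agree on $D\cap\erre$; and (c) concludes $h^{\k}_\j=f_\j$ on all of $D$ by the identity principle for holomorphic vector-valued functions, for which connectedness of $D$ and $D\cap\erre\neq\vuoto$ are essential. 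Independence of $\k$ then drops out as a corollary of $f=h^{\k}$. Steps (a)--(c) are the substance of the reverse implication, and none of them appears in your write-up; in particular, you locate the use of connectedness and of $D\cap\erre\neq\vuoto$ in the (essentially vacuous) forward direction rather than in the holomorphic-continuation step where they actually do the work. So the overall strategy is right in spirit, but the argument as written has a genuine hole precisely at its declared ``main obstacle.''
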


\begin{proof}
Evidently, if $f$ is right slice regular, then it is also right slice hyperholomorphic.
Let us prove the converse. Suppose that $f$ is right slice hyperholomorphic. 
Hence, for every $\j \in \su$, the function $f_\j : D \funzione X_\j$ defined by $f_\j(r+si) := f(r+s\j)$, $r, s \in \erre,\ r+si \in D$ is holomorphic and condition \eqref{equa1} of Definition \ref{D:Xslice regularity} is satisfied. Now let us fix $\k \in \mathbb{S}$ and define the functions 
$F_1^{\k} : D \funzione X$ and $F_2^{\k} : D \funzione X$ by 
\begin{equation}\textstyle
  F_1^{\k}(z) := \frac{1}{2}(f_\k(z) + f_\k(\overline{z})), \quad
   F_2^{\k}(z) := -\frac{1}{2}(f_\k(z) - f_\k(\overline{z})) \k, \qquad z \in D, \label{F_2}
\end{equation}
which are of class $C^1$ in the real sense. Define also $h^{\k} : \Omega_D \funzione X$ by 
$h^{\k}(r+s\j) := F_1^{\k}(r+si) + F_2^{\k}(r+si)\j$, for $r, s \in \erre,\ \j \in \mathbb{S}$, $r+s\j \in \Omega_D
$. We will prove that $F_1^{\k}$, $F_2^{\k}$, and $h^\k$ do not depend on $\k$. For every $\j \in \mathbb{S}$ and for all 
$r, s \in \erre$ such that $q = r +s\j \in \Omega_D$, setting $z = r + s\mathrm{i} \in D$ and observing that 
$f_\k(z) = f_{-\k}(\zbar)$, we find that 
\begin{align}
  \frac{\partial h^\k_\j}{\partial r}(z) + \frac{\partial h^\k_\j}{\partial s}(z) \j
   & = \frac{1}{2}\frac{\partial}{\partial r} (f_\k(z)+f_\k(\zbar) - f_\k(z)\k\j + f_\k(\zbar)\k\j) \notag \\
   &    \phantom{=\ } + \frac{1}{2}\frac{\partial}{\partial s} (f_\k(z)\j+f_\k(\zbar)\j + f_\k(z)\k - f_\k(\zbar)\k) \notag \\
& = \frac{1}{2}\left(\frac{\partial f_\k}{\partial r}(z) + \frac{\partial f_\k}{\partial s}(z)\k\right)
   + \frac{1}{2}\left(\frac{\partial f_{-\k}}{\partial r}(z) +  \frac{\partial f_{-\k}}{\partial s}(z)(-\k)\right) \notag \\
   & \phantom{=\ } - \frac{1}{2}\left(\frac{\partial f_\k}{\partial r}(z) +\frac{\partial f_\k}{\partial s}(z)\k\right)\k\j        + \frac{1}{2}\left(\frac{\partial f_{-\k}}{\partial r}(z) + \frac{\partial f_{-\k}}{\partial s}(z)(-\k)\right)\k\j 
   = 0,\notag
\end{align}
because $f_\j$ is holomorphic, hence $h^\k_\j : D \funzione X_\j$ is holomorphic. Moreover, we have $h^\k_\j(r) = f(r)$ for every $r \in D \cap \erre$, so that 
by holomorphic continuation we find that $h^\k_\j(z) = f_\j(z)$ for every $z \in D$. Hence, by the arbitrariness of $\j$, we infer that $f = h^\k$ on $\Omega_D$, and we have proved that
\begin{equation}
  f(r+s\j) = F^\k_1(r+s\mathrm{i}) + F^\k_2(r+s\mathrm{i})\j \qquad 
  \forall r, s \in \erre,\ \j \in \mathbb{S},\ r+s\j \in \Omega_D.
\end{equation}
In particular $F_1^{\k}$, $F_2^{\k}$ and $h^\k$ do not depend on the choice of 
$\k \in \mathbb{S}$ so we can adopt the notations $F_1 := F_1^{\k}$, $F_2 := F_2^{\k}$, which are the stem functions we are looking for.
\end{proof}

Let us recall the definition of spherical derivative of a right slice regular function. To do this, we use implicitly Remark \ref{rem}$(\mr{i})$. 

\begin{Def}
Let $D$ be a non-empty open subset of $\ci$ invariant under the complex conjugation (which does not necessarily intersect the real axis $\erre$) and let $f:\OO_D \lra X$ be a right slice regular function. We define the \emph{spherical derivative $\partial_\s f:\OO_D\funzione X$ of $f$} as follows:
\begin{equation}\label{def g}
  \partial_\s f(q) := 
  \begin{cases}
  (f(q)-f(\overline{q}))(q-\overline{q})^{-1} & \text{ if $q \in \OO_D \setmeno \erre$},  \\
  \ \\
  \textstyle \frac{\partial f}{\partial r}(q) & \text{ if $q \in \OO_D \cap \erre$.}
\end{cases}
\end{equation}
\end{Def}

In the next definition, we specify what we mean by a real analytic function with values in a real Banach space.

\begin{Def}\label{real analytic}
Let $n \in \enne\setmeno\{0\}$, let $\OO$ be a non-empty open subset of $\erre^n$, let 
$(Z,\norma{\cdot}{})$ be a real Banach space and let $f : \Omega \funzione Z$ be a function. We say that 
$f$ is \emph{real analytic} if for every $w_0 \in \OO$, there exist an open neighborhood $V_0$ of $w_0$ in $\OO$ and elements $c_h \in Z$ for every $h\in\enne^n$ such that $V_0\subset\OO$ and 
\begin{equation}\label{Real power series}
f(w)=\sum_{h\in\enne^n}c_h(w-w_0)^h \qquad \forall w \in V_0,
\end{equation}
where $v^\ell := v_1^{\ell_1}\cdots v_n^{\ell_n}$ for every $v = (v_1,\ldots,v_n) \in \erre^n$ and $\ell= (\ell_1,\ldots,\ell_n) \in \enne^n$. The convergence in \eqref{Real power series} is meant in the sense of summability, that is, for every $\eps > 0$, there exists a finite set $F_\eps \subset \enne^n$ such that $\norma{f(w) - \sum_{h \in F} c_h (w-w_0)^h}{} < \eps$ for every finite set $F\subset\enne^n$ with $F_\eps \subset F$.
\end{Def}

In the following lemma, we prove two facts about vector-valued real analytic functions that will be crucial in our proofs.

\begin{Lem}\label{vector analytic}
Let $n \in \enne\setmeno\{0\}$, let $\OO$ be a non-empty open subset of $\erre^n$ and let $(Z,\norma{\cdot}{})$ be a real Banach space. We have:
\begin{itemize}
\item[(i)] Let $f:\OO\funzione Z$ be a real analytic function and let $V_f:=\{w \in \OO:f(w) = 0\}$ be the vanishing set of $f$. If $\OO$ is connected and $V_f$ has an interior point in~$\OO$, then $V_f=\OO$, that is, $f(w) = 0$ for every $w\in\OO$. 
\item[(ii)] Pick $c_h \in Z$ for every $h \in \enne^n$ in such a way that the series $\textstyle\sum_{h\in\enne^n}c_hw^h$ is absolutely convergent for every $w \in \Omega$. Assume that $\OO$ is stable under reflections, that is,
\begin{equation}\label{CCC}
(w_1,\ldots,w_n)\in\OO,i\in\{1,\ldots,n\}\Longrightarrow(w_1,\ldots,w_{i-1},-w_i,w_{i+1},\ldots,w_n)\in\OO.
\end{equation}
Then, the vector-valued function $g:\OO\funzione Z$, given by $g(w) := \sum_{h\in\enne^n}c_hw^h$, is real analytic.
\end{itemize}
\end{Lem}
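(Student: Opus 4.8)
The plan is to reduce both statements to the corresponding classical facts about scalar-valued real analytic functions by pairing with continuous linear functionals, so as to avoid re-running power-series manipulations in the Banach-space setting. For part (i), fix $w_0$ an interior point of $V_f$ and, for an arbitrary $\Lambda \in Z^*$, consider the scalar function $\Lambda \circ f : \OO \funzione \erre$. Since $f$ is locally given by an absolutely (hence, by completeness, norm-)convergent power series, applying $\Lambda$ termwise shows $\Lambda \circ f$ is real analytic on $\OO$ in the usual scalar sense; it vanishes on a neighborhood of $w_0$, so by the classical identity theorem for real analytic functions on a connected open set (iterated one-variable argument, or the standard reference for several variables) we get $\Lambda(f(w)) = 0$ for every $w \in \OO$. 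As $\Lambda \in Z^*$ is arbitrary and $Z^*$ separates points of $Z$ by Hahn--Banach, we conclude $f(w) = 0$ on $\OO$, i.e.\ $V_f = \OO$. One should take a moment to justify that termwise application of $\Lambda$ is legitimate, which follows immediately from the summability definition in Definition \ref{real analytic} together with continuity and linearity of $\Lambda$.

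For part (ii), I would again test against $\Lambda \in Z^*$: the scalar series $\sum_{h\in\enne^n}\Lambda(c_h)w^h$ is absolutely convergent for every $w\in\OO$ (by continuity of $\Lambda$, $|\Lambda(c_h)w^h| \le \norma{\Lambda}{}\,\norma{c_h}{}\,|w^h|$), so by the classical theory $\Lambda \circ g$ is real analytic on $\OO$. But "$\Lambda \circ g$ real analytic for all $\Lambda$" does not by itself give $g$ real analytic with the required \emph{vector} coefficients, so this alone is not enough; instead I would argue directly. Fix $w_0 = ((w_0)_1,\ldots,(w_0)_n) \in \OO$ and choose a polyradius $\rho = (\rho_1,\ldots,\rho_n)$ of positive entries so small that the closed polydisc $\overline{P} := \prod_i [\,(w_0)_i - \rho_i,\ (w_0)_i + \rho_i\,]$ is contained in $\OO$. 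The key point where hypothesis \eqref{CCC} is used: stability under reflections guarantees that the point $w^* := ((w_0)_i + \rho_i \cdot \mathrm{sign})_i$ — more precisely, every vertex of $\overline{P}$, obtained from the "outermost" corner by flipping signs of coordinates — lies in $\OO$, hence the series $\sum_h |c_h| |w|^h$ converges at a point dominating $|w^h|$ for all $w \in \overline{P}$ in absolute value coordinatewise. Concretely, pick a single point $\tilde w \in \OO$ with $\tilde w_i = |(w_0)_i| + \rho_i + \varepsilon$ for small $\varepsilon>0$ (shrinking $\rho$ if necessary so $\tilde w \in \OO$, using \eqref{CCC} to move into the positive orthant region), so that $\sum_h \norma{c_h}{}\,\tilde w^h < \infty$ and $|w^h| \le \tilde w^h$ for all $w \in \overline{P}$.

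With such a dominating point in hand, the expansion about $w_0$ is produced by the binomial rearrangement: write $w^h = ((w - w_0) + w_0)^h = \sum_{\ell \le h} \binom{h}{\ell} w_0^{\,h-\ell} (w-w_0)^\ell$ (multi-index binomial theorem), substitute into $g(w) = \sum_h c_h w^h$, and interchange the order of summation. The interchange is justified because the double series $\sum_h \sum_{\ell \le h} \norma{c_h}{}\,\binom{h}{\ell}\,|w_0|^{h-\ell}\,|w-w_0|^\ell = \sum_h \norma{c_h}{}\,(|w_0| + |w - w_0|)^h$ converges for $w \in P$ with $|w-w_0|_i < \varepsilon$ small, since then $(|w_0|_i + |w-w_0|_i) < \tilde w_i$; this is an unconditional convergence statement in the Banach space $Z$, so Fubini for summable families applies. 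Collecting the coefficient of $(w-w_0)^\ell$ gives $g(w) = \sum_{\ell \in \enne^n} d_\ell (w - w_0)^\ell$ with $d_\ell := \sum_{h \ge \ell} \binom{h}{\ell} c_h w_0^{\,h-\ell} \in Z$, valid on a neighborhood $V_0$ of $w_0$, which is exactly \eqref{Real power series}. Since $w_0 \in \OO$ was arbitrary, $g$ is real analytic.

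The main obstacle is bookkeeping in part (ii): carefully choosing the dominating point inside $\OO$ — which is precisely where the reflection-stability hypothesis \eqref{CCC} is indispensable, since without it the polydisc around $w_0$ need not be controlled by a single point of $\OO$ in the positive orthant — and then rigorously justifying the rearrangement of the multi-indexed double series in the Banach-space norm. Everything else is a routine transcription of the scalar theory via $Z^*$ and Hahn--Banach.
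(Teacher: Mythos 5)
Your proof is correct and follows essentially the same route as the paper: part (i) by testing against $Z^*$ and invoking Hahn--Banach together with the scalar identity theorem, and part (ii) by using reflection stability to obtain a dominating point with nonnegative coordinates, then rearranging via the multi-index binomial formula and Fubini for summable families. The only point you leave implicit -- that the coefficient sums $d_\ell = \sum_{h\ge\ell}\binom{h}{\ell}c_h w_0^{h-\ell}$ themselves converge absolutely -- is handled in the paper by evaluating the dominating double series at a point $\omega$ with strictly positive coordinates and dividing by $\omega^\ell$, but this follows routinely from the absolute summability you already established.
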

\begin{proof}
(i) By \eqref{Real power series}, for every linear continuous $\Phi: Z \funzione \erre$, the real-valued function $\Phi \circ f : \OO\funzione\erre$ is real analytic. Moreover, $(\Phi \circ f)(w) = 0$ for all $w \in V_f$. By the analytic continuation theorem for real analytic functions with values in $\erre$, we have that $\Phi \circ f$ vanishes on the whole $\OO$. By the Hahn-Banach theorem, we deduce that $f$ vanishes on $\OO$ as well.

(ii) For every $w=(w_1,\ldots,w_n)\in\erre^n$, we set $\widehat{w}:=(|w_1|,\ldots,|w_n|)\in\erre^n$. Let $u=(u_1,\ldots,u_n)$ $\in$ $\OO$. By \eqref{CCC}, we have that $\widehat{u}\in\OO$. Therefore, there exists an open neighborhood $V$ of $0$ in $\erre^n$ so small that $\widehat{u}+\widehat{v}\in\OO$ for every $v\in V$. Recall that, given any $h=(h_1,\ldots,h_n)\in\enne^n$ and $w,z\in\erre^n$, it holds: $(w+z)^h=\sum_{k,h\in\enne^n,k\leq h}{h\choose k}w^{h-k}z^k$, where $k=(k_1,\ldots,k_n)\leq h$ means that $k_i\leq h_i$ for every $i\in\{1,\ldots,n\}$, and ${h\choose k}:=\prod_{i=1}^n{h_i\choose k_i}$. As we said, for every $v\in V$, we have $\widehat{u}+\widehat{v}\in\OO$ so the series $\sum_{h\in\enne^n}\norma{c_h}{}(\widehat{u}+\widehat{v})^h$ converges. Thus, by assumption, the series $\sum_{k,h\in\enne^n,k\leq h}c_h{h\choose k}u^{h-k}v^k$  converges absolutely:
\begin{align}
\sum_{k,h\in\enne^n,\ k\leq h}\left\|c_h{h\choose k}u^{h-k}v^k\right\|&=
\sum_{k,h\in\enne^n,\ k\leq h}\norma{c_h}{}{h\choose k}\widehat{u}^{h-k}\widehat{v}^k\notag\\
&=\sum_{h\in\enne^n}\norma{c_h}{}\sum_{k\in\enne^n,\ k\leq h}{h\choose k}\widehat{u}^{h-k}\widehat{v}^k=\sum_{h\in\enne^n}\norma{c_h}{}(\widehat{u}+\widehat{v})^h<+\infty.\label{L}
\end{align}
Fix $\omega=(\omega_1,\ldots,\omega_n)\in V$ with $\omega_i>0$ for every $i\in\{1,\ldots,n\}$. Bearing in mind \eqref{L}, for any fixed $k\in\enne^n$, we obtain:
\begin{align*}
\sum_{h\in\enne^n,\ k\leq h}\left\|c_h{h\choose k}u^{h-k}\right\|&=
\omega^{-k}\sum_{h\in\enne^n,\ k\leq h}\left\|c_h{h\choose k}u^{h-k}\omega^k\right\|\\
&\leq\omega^{-k}\sum_{l,h\in\enne^n,\ l\leq h}\left\|c_h{h\choose l}u^{h-l}\omega^l\right\|=\omega^{-k}\sum_{h\in\enne^n}\norma{c_h}{}(\widehat{u}+\omega)^h<+\infty.
\end{align*}
The latter estimate implies that, for each $k\in\enne^n$, the series $\sum_{h\in\enne^n,k\leq h}c_h{h\choose k}u^{h-k}$ converges absolutely to some $d_k\in X$. Therefore, for every $w\in u+V$, it holds:
\begin{align*}
\sum_{k\in\enne^n}d_k(w-u)^k&=\sum_{k\in\enne^n}\sum_{h\in\enne^n,k\leq h}c_h{h\choose k}u^{h-k}(w-u)^k\\
&=\sum_{h\in\enne^n}c_h\sum_{k\in\enne^n,k\leq h}{h\choose k}u^{h-k}(w-u)^k=\sum_{h\in\enne^n}c_h(u+w-u)^h=g(w).
\end{align*}
This proves that $g$ is real analytic, as required.
\end{proof}

Note that, in the previous proof of Lemma \ref{vector analytic}$(\mr{ii})$, we used only the following weak version of \eqref{CCC}: $(w_1,\ldots,w_n)\in\OO\ \Longrightarrow\ (|w_1|,\ldots,|w_n|)\in\OO$.

\begin{Rem}\label{RA-module}
The Banach two-sided $\quat$-module $X$ is also a real Banach space: it suffices to consider the scalar multiplication for real quaternions. Thus, it makes sense to speak about real analytic functions with values in $X$.

Let $C^\omega(\OO_D,\quat)$ be the ring of all quaternion-valued real analytic functions, endowed with the pointwise defined operations of addition and multiplication. Observe that, given real analytic functions $f,g:\OO_D\funzione X$ and $\eta\in C^\omega(\OO_D,\quat)$, the functions $f+g,f\eta:\OO_D\funzione\quat$ defined by $(f+g)(q):=f(q)+g(q)$ and $(f\eta)(q):=f(q)\eta(q)$ are also real analytic. It follows that the set of all real analytic functions from $\OO_D$ to $X$ is a right $C^\omega(\OO_D,\quat)$-module.  \hfill {\tiny$\blacksquare$}
\end{Rem}

Next result is a crucial tool to prove Theorem \ref{T:main}.

\begin{Prop}\label{prop:sd}
Let $D$ be a non-empty open subset of $\ci$ invariant under the complex conjugation and let $f:\OO_D \lra X$ be a function. If $f:\OO_D \lra X$ is a right slice regular function, then its spherical derivative $\partial_\s f:\OO_D\funzione X$ is a real analytic function.    
\end{Prop}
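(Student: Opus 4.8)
The plan is to read off $\partial_\s f$ from a stem function inducing $f$, to reduce the problem to the real analyticity of a single $X$-valued map on $D$, and then to handle the real axis by a parity argument combined with Lemma~\ref{vector analytic}(ii).

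First I would fix a stem function $F=(F_1,F_2):D\funzione X^2$ inducing $f$ (its existence is guaranteed by Definition~\ref{D:Xslice regularity}), so that $F_1,F_2\in C^1(D,X)$, they satisfy the Cauchy--Riemann equations~\eqref{equa2} and the stem symmetries $F_1(r-si)=F_1(r+si)$, $F_2(r-si)=-F_2(r+si)$, and $f_\j(r+si)=F_1(r+si)+F_2(r+si)\j$ for all $\j\in\mathbb{S}$. Choosing any $\j\in\mathbb{S}$ and using $f_{-\j}(z)=f_\j(\overline{z})$, one gets $F_1(z)=(f_\j(z)+f_\j(\overline{z}))/2$ and $F_2(z)=-(f_\j(z)-f_\j(\overline{z}))\j/2$; since each $f_\j:D\funzione X_\j$ is holomorphic it is locally the sum of a norm-convergent $X$-valued power series, hence real analytic as a map on the open set $D\subset\erre^2$, and therefore $F_1$ and $F_2$ are real analytic on $D$.

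Next I would use~\eqref{def g} to compute the spherical derivative. For $q=r+s\j\in\OO_D$ with $s\neq0$ one has $\overline{q}=r-s\j$, the stem symmetries give $f(q)-f(\overline{q})=2F_2(r+si)\j$, and $q-\overline{q}=2s\j$ has inverse $-\j/(2s)$; hence $\partial_\s f(q)=F_2(r+si)/s$. Since $F_2$ vanishes on $D\cap\erre$, this tends to $\frac{\partial F_2}{\partial s}(r)$ as $s\to0$, while for a real point $q=r\in\OO_D\cap\erre$ Remark~\ref{rem}(i) together with~\eqref{equa2} gives $\partial_\s f(r)=\frac{\partial F_1}{\partial r}(r)=\frac{\partial F_2}{\partial s}(r)$. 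Therefore, defining $G:D\funzione X$ by $G(r+si):=F_2(r+si)/s$ for $s\neq0$ and $G(r):=\frac{\partial F_2}{\partial s}(r)$ for $r\in D\cap\erre$, I would record the identity
\[
\partial_\s f(q)=G\bigl(\re q+i\,|\im q|\bigr)\qquad\text{for every }q\in\OO_D .
\]

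Finally I would establish that $G$ is real analytic on $D$ and deduce that $q\mapsto G(\re q+i|\im q|)$ is real analytic on $\OO_D$. Away from the real axis $G=F_2\cdot(1/\im)$ is a product of real analytic functions. Near $r_0\in D\cap\erre$, expanding $F_2(r+si)=\sum_{m,n\ge0}c_{mn}(r-r_0)^m s^n$ (absolutely convergent near $(r_0,0)$), the symmetry $F_2(r-si)=-F_2(r+si)$ forces $c_{mn}=0$ for even $n$, so $G(r+si)=\sum_{m,k\ge0}c_{m,2k+1}(r-r_0)^m s^{2k}$, a power series in $(r-r_0,s)$ that, by comparison with the series of $F_2$, converges absolutely near $(r_0,0)$; hence $G\in C^\omega(D,X)$. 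Then, for $q_0\in\OO_D$ with $\im q_0\neq0$, the map $q\mapsto(\re q,\sqrt{x_1^2+x_2^2+x_3^2})$ is real analytic near $q_0$ with values in $D$, and composing it with $G$ gives real analyticity of $\partial_\s f$ near $q_0$. The remaining ---and only delicate--- case is $q_0=r_0\in\OO_D\cap\erre$, where $|\im q|$ is not smooth: here the even-power form of $G$ near $r_0$ yields, in a neighborhood of $q_0$,
\[
\partial_\s f(q)=\sum_{m,k\ge0}c_{m,2k+1}(\re q-r_0)^m\bigl(x_1^2+x_2^2+x_3^2\bigr)^{k},
\]
and since by Lemma~\ref{vector analytic}(ii) the series $\sum_{m,k}c_{m,2k+1}w_1^{m}w_2^{k}$ defines a real analytic function on a polydisc of $\erre^2$ about the origin, while $q\mapsto(\re q-r_0,\,x_1^2+x_2^2+x_3^2)$ is polynomial, a rearrangement of absolutely convergent series shows that $\partial_\s f$ is real analytic near $q_0$. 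The only step that requires care is exactly this crossing of the real axis, and it is the oddness of $F_2$ in the variable $s$ that makes it work.
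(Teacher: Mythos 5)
Your proof is correct and follows essentially the same strategy as the paper's: express $\partial_\s f$ through $F_2(r+si)/s$, observe that (by parity/odd-ness in $s$) the local Taylor series contains only even powers of $s$, reinterpret it as a series in $(r-r_0,s^2)$, invoke Lemma~\ref{vector analytic}$(\mr{ii})$ to get a real analytic germ on a reflection-stable neighborhood, and then compose with the polynomial map $q\longmapsto(\re q - r_0,\,x_1^2+x_2^2+x_3^2)$. The only differences are cosmetic: the paper derives the even-power structure and its absolute convergence from Corollary~\ref{C-diff->C-an}$(\mr{ii})$ via a binomial computation and works on the global parabolic set $P=\gamma(B)$, whereas you argue locally on polydiscs and use the parity of the Taylor coefficients of $F_2$; the one point you should make explicit is the absolute convergence of $\sum_{m,k}\|c_{m,2k+1}\|\,|r-r_0|^m|s|^{2k}$, which does not follow by plain termwise comparison with the $F_2$-series (the latter has the extra factor $|s|$), but is immediate after multiplying and dividing by a fixed $\eta>0$ with $\sum\|c_{m,2k+1}\|\,|r-r_0|^m\eta^{2k+1}<\infty$.
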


\begin{proof}
Let $F=(F_1,F_2):D\funzione X^2$ be a stem function satisfying \eqref{equa2}. Suppose that $f$ is induced by $F$. Let $q=r+s\j\in\OO_D\setmeno\erre$ with $r,s\in\erre$, $s\neq0$ and $\j\in\mathbb{S}$. Since $f(q)=F_1(r+si)+F_2(r+si)\j$ and $f(\overline{q})=F_1(r-si)+F_2(r-si)\j=F_1(r+si)-F_2(r+si)\j$, we have
\[
\partial_\s f(q)=(f(q)-f(\overline{q}))(q-\overline{q})^{-1}=(2F_2(r+si)\j)(2s\j)^{-1}=F_2(r+si)s^{-1}.
\]
Thus, it holds:
\begin{equation}\label{equa4}
\partial_\s f(q)=F_2(r+si)s^{-1}\qquad\forall q=r+s\j\in\OO_D\setmeno\erre,\; r,s\in\erre,\; s>0,\;\j\in\mathbb{S}.
\end{equation}
Fix $\k\in\mathbb{S}$. Taking Definition \ref{D:Xslice regularity} into account, we have:
\begin{equation}\label{H}
F_2(r+si)s^{-1}=((r+si)-(r-si))^{-1}(f_\k(r+si) - f_\k(r-si)) \quad \forall r+si\in D\setmeno\erre
\end{equation}
and
\begin{equation}\label{equa3}
\textstyle
F_2(r+si) = -\frac{1}{2}(f_\k(r+si) - f_\k(r-si))\k \qquad \forall r+si\in D,\;r,s\in\erre.
\end{equation}
Define the real analytic functions $\xi:\OO_D\setmeno\erre\funzione \erre$ and $\varphi,\overline{\varphi}:\OO_D\setmeno\erre\funzione D\setmeno\erre$ as
\begin{align*}
\xi(q)&:=|\im(q)|^{-1}=(x_1^2+x_2^2+x_3^2)^{-1/2}\,,\\
\varphi(q)&:=\re(q)+i|\im(q)|=x_0+i(x_1^2+x_2^2+x_3^2)^{1/2}\,,\\
\overline{\varphi}(q)&:=\re(q)-i|\im(q)|=x_0-i(x_1^2+x_2^2+x_3^2)^{1/2}
\end{align*}
for every $q=x_0+x_1i+x_2j+x_3k\in\OO_D\setmeno\erre$ with $x_0,x_1,x_2,x_3\in\erre$. Denote by $f_\k|:D\setmeno\erre\funzione X$ and $\partial_\s f|:\OO_D\setminus\erre\funzione X$ the restrictions of $f_\k$ to $D\setminus\erre$ and of $\partial_\s f$ to $\OO_D\setminus\erre$, respectively. By \eqref{equa1}, and by Proposition~\ref{char-Y-holom} and Corollary \ref{C-diff->C-an}$(\mr{i})$ in the Appendix, the function $f_\k:D\funzione X_\k$ is holomorphic and hence real analytic. It follows that the restriction $f_\k|$ of the real analytic function $f_\k$, and the compositions $f_\k|\circ\varphi$ and $f_\k|\circ\overline{\varphi}$ of the real analytic functions $f_\k|$, $\varphi$ and $\overline{\varphi}$ are also real analytic. By \eqref{equa4} and \eqref{equa3}, we have
\[
\textstyle
\partial_\s f|=-(f_\k|\circ\varphi)\frac{\xi\k}{2}+(f_\k|\circ\overline{\varphi})\frac{\xi\k}{2},
\]
so (cf.\ Remark \ref{RA-module}) the restriction $\partial_\s f|$ of $\partial_\s f$ is real analytic as well. By \eqref{equa3} (and Remark~\ref{RA-module}), we also deduce that $F_2:D\to X$ is real analytic, since $f_\k(r-si) = f_{-\k}(r+si)$ for every $r+si\in D$.

Suppose that $\OO_D\cap\erre\neq\vuoto$ and choose arbitrarily a point $r_0\in\OO_D\cap\erre=D\cap\erre$. It remains to show that $\partial_\s f$ is real analytic locally at $r_0$ in $\OO_D$. Up to a translation along the real axis, we can assume $r_0=0$. By \eqref{H} above and Corollary \ref{C-diff->C-an}$(\mr{ii})$ in the Appendix, there exist $\delta_0>0$ and elements $d_{l,k}$ of $X$ for each $l,k\in\enne$ such that $B:=B(0,\delta_0)\subset D$ and the series $\sum_{l, k \in \enne} d_{l,k}r^ls^{2k}$ is absolutely convergent for every $r+si \in B$ and
\begin{equation} \label{eq:F2}
F_2(r+si)s^{-1}=\sum_{l,k \in \enne}d_{l,k}r^l s^{2k} \qquad \forall r+si\in B\setmeno\erre.
\end{equation}

Let $\gamma:\ci\funzione\ci$ be the homeomorphism and let $P$ be the open subset of $\ci$ defined as follows:
\begin{center}
$\gamma(r+si):=r+s^2i\,$ if $\,s\geq0\,$ and $\,\gamma(r+si):=r-s^2i\,$ if $\,s<0$,
\end{center}
and
\begin{center}
$P:=\gamma(B)=\{r+si\in\ci:r^2-\delta^2<s<\delta^2-r^2\}$.
\end{center}
Define the function $G:P\to X$ by setting
\begin{equation}\label{G}
G(r+si):=\sum_{l,k \in \enne}d_{l,k}r^l s^k \qquad \forall r+si\in P,
\end{equation}
where the series $\sum_{l,k \in \enne}d_{l,k}r^l s^k$ converges absolutely for every $r+si\in P$. Observe that $P$ trivially satisfies property \eqref{CCC}. Therefore, we can apply Lemma \ref{vector analytic}(ii), deducing that $G$ is real analytic. By \eqref{eq:F2}, it holds:
\begin{equation}\label{GG}
G(r+s^2i)=F_2(r+si)s^{-1}\qquad \forall r+si\in B\setmeno\erre.
\end{equation}
Since $F_2$ vanishes on $B\cap\erre$, bearing in mind \eqref{GG} and \eqref{equa2}, we have:
\begin{equation}\label{10}
G(q) = \lim_{s \to 0^+} \frac{F_2(q+i\sqrt{s})}{\sqrt{s}} = 
\frac{\partial F_2}{\partial s}(q)=
\frac{\partial F_1}{\partial r}(q)=\frac{\partial f}{\partial r}(q) \qquad \forall q\in B\cap\erre.
\end{equation}

Define the real analytic function $\Gamma:\OO_B\funzione P$ by
\[
\Gamma(q):=\re(q)+i|\im(q)|^2=x_0+i(x_1^2+x_2^2+x_3^2)
\]
for every $q=x_0+x_1i+x_2j+x_3k\in\OO_B$ with $x_0,x_1,x_2,x_3\in\erre$. Observe that $\OO_B$ is an open neighborhood of $r_0$ (an open ball indeed) in $\OO_D$, and the composition $G\circ\Gamma:\OO_B\to X$ of the real analytic functions $G$ and $\Gamma$ is also a real analytic function. By \eqref{equa4}, \eqref{GG} and \eqref{10}, we have:
\[
\partial_\s f(q)=F_2(\re(q)+i|\im(q)|)|\im(q)|^{-1}=G(\re(q)+i|\im(q)|^2)=(G\circ\Gamma)(q)\quad\forall q\in\OO_B\setmeno\erre
\]
and
\[
\partial_\s f(q)=\frac{\partial f}{\partial r}(q)=G(q)=(G\circ\Gamma)(q)\qquad\forall q\in\OO_B\cap\erre.
\]
It follows that $\partial_\s f(q)=(G\circ\Gamma)(q)$ for every $q\in\OO_B$, so $\partial_\s f$ is real analytic on $\OO_B$, as required.
\end{proof}

\begin{Cor}\label{Qq analyt}
Let $D(\A)$ be a right $\quat$-submodule of $X$, let $\A : D(\A) \funzione X$ be a closed right linear operator such that $\rho_\s(\A) \cap \erre \neq \vuoto$ and let us consider the mappings $\S^{{}^{-1}}:\rho_\s(\A)\funzione\Lin^\r(X) : q \longmapsto \Sq(\A)$ and $\Q:\rho_\s(\A)\funzione\Lin^\r(X) : q \longmapsto \Q_q(\A)$. Then, $\S^{{}^{-1}}$ is right slice regular, $\Q=-\partial_\s\S^{{}^{-1}}$ and hence $\Q$ is real analytic.
\end{Cor}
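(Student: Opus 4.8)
The plan is to prove Corollary \ref{Qq analyt} by first establishing that the $\Sres$-resolvent map $\S^{{}^{-1}}$ is right slice regular, then identifying $\Q$ with $-\partial_\s\S^{{}^{-1}}$ via a direct computation, and finally invoking Proposition \ref{prop:sd} to conclude real analyticity. Since $\rho_\s(\A)$ is axially symmetric (the operators $\Delta_q(\A)$, $\Q_q(\A)$, $\S_q^{-1}(\A)$ depend on $q$ only through $\re(q)$ and $|q|$), there is a conjugation-invariant open subset $D$ of $\ci$ with $\OO_D=\rho_\s(\A)$, and by hypothesis $\rho_\s(\A)\cap\erre\neq\vuoto$ so $D\cap\erre\neq\vuoto$. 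I would work on each connected component of $D$ separately so that Lemma \ref{equivalence} applies, reducing right slice regularity to the single holomorphy condition \eqref{equa1}.

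First I would verify that $q\longmapsto\S_q^{-1}(\A)$ is right slice hyperholomorphic. Fix $\j\in\mathbb{S}$ and consider the map $z=r+si\longmapsto \S_{r+s\j}^{-1}(\A)\in\Lin^\r(X)_\j$; I must check it is $C^1$ in the real sense and satisfies $\frac{\partial}{\partial r}+\frac{\partial}{\partial s}\j=0$. For this, note that $\Delta_{r+s\j}(\A)=\A^2-2r\A+(r^2+s^2)\Id$ is polynomial, hence smooth, in $(r,s)$; since inversion $\B\mapsto\B^{-1}$ is smooth on the open set of invertible elements of the Banach algebra $\Lin^\r(X)$ (restricting to real scalars, as in Lemma \ref{AQ Lin} and Remark \ref{RA-module}), the map $(r,s)\mapsto\Q_{r+s\j}(\A)=\Delta_{r+s\j}(\A)^{-1}$ is $C^1$, indeed $C^\infty$; and $\A\Q_{r+s\j}(\A)$ is $C^1$ by Lemma \ref{AQ Lin} together with continuity of the relevant maps. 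Then $\S_{r+s\j}^{-1}(\A)=\Q_{r+s\j}(\A)\overline{(r+s\j)}-\A\Q_{r+s\j}(\A)$ is $C^1$. The holomorphy relation can be checked by differentiating: using $\frac{\partial}{\partial r}\Q_{r+s\j}=-\Q_{r+s\j}(-2\A+2r\Id)\Q_{r+s\j}$ and $\frac{\partial}{\partial s}\Q_{r+s\j}=-\Q_{r+s\j}(2s\Id)\Q_{r+s\j}$, together with $\frac{\partial}{\partial r}\overline{(r+s\j)}=1$, $\frac{\partial}{\partial s}\overline{(r+s\j)}=-\j$, one computes $\frac{\partial}{\partial r}\S_{r+s\j}^{-1}+\big(\frac{\partial}{\partial s}\S_{r+s\j}^{-1}\big)\j$ and checks the terms cancel; the cancellation is essentially the classical computation showing the $\Sres$-resolvent kernel is slice regular in the operator case, and should be a short manipulation using that $\Q$ commutes with $\A$ and with real scalars. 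This gives that $\S^{{}^{-1}}$ is right slice hyperholomorphic on each component of $\rho_\s(\A)$, hence right slice regular by Lemma \ref{equivalence}.

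Next I would compute $\partial_\s\S^{{}^{-1}}$ directly from definition \eqref{def g}. For $q=r+s\j\in\rho_\s(\A)\setmeno\erre$ with $s\neq0$, since $\S_q^{-1}(\A)$ and $\S_{\qbar}^{-1}(\A)$ differ only in the $\overline{q}$ versus $\overline{\qbar}=q$ factor multiplying $\Q_q(\A)$ (note $\Q_{\qbar}(\A)=\Q_q(\A)$ and $\Delta_{\qbar}(\A)=\Delta_q(\A)$ because these depend only on $\re(q)=\re(\qbar)$ and $|q|=|\qbar|$), we get
\begin{align*}
\partial_\s\S^{{}^{-1}}(q)
&=\big(\S_q^{-1}(\A)-\S_{\qbar}^{-1}(\A)\big)(q-\qbar)^{-1}
=\big(\Q_q(\A)\qbar-\Q_q(\A)q\big)(q-\qbar)^{-1}\\
&=\Q_q(\A)(\qbar-q)(q-\qbar)^{-1}=-\Q_q(\A).
\end{align*}
For $q=r_0\in\rho_\s(\A)\cap\erre$, we have $\S^{{}^{-1}}(r)=\Q_r(\A)r-\A\Q_r(\A)$ as a $C^1$ function of the real variable $r$, and $\partial_\s\S^{{}^{-1}}(r_0)=\frac{\partial}{\partial r}\big|_{r=r_0}\S^{{}^{-1}}(r)$; differentiating and using that $\Q_r(\A)=(\A-r\Id)^{-2}$ for real $r$ (so $\A\Q_r(\A)$ behaves as in the classical resolvent calculus), a short computation gives $-\Q_{r_0}(\A)$ again. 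Thus $\Q=-\partial_\s\S^{{}^{-1}}$ everywhere on $\rho_\s(\A)$, and since $\S^{{}^{-1}}$ is right slice regular, Proposition \ref{prop:sd} yields that $\partial_\s\S^{{}^{-1}}$, hence $\Q$, is real analytic.

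The main obstacle I anticipate is the verification of the holomorphy identity \eqref{equa1} for $\S^{{}^{-1}}$: one has to handle the operator $\A$ carefully (it is unbounded, only closed, and the manipulations must respect the domains $D(\A^2)$), and one must be sure the term-by-term differentiation of the operator-valued inverse $\Q_{r+s\j}(\A)$ is legitimate — this uses smoothness of inversion in $\Lin^\r(X)$ viewed as a real Banach algebra plus the closed graph argument of Lemma \ref{AQ Lin} to control $\A\Q_{r+s\j}(\A)$. Everything else (axial symmetry of $\rho_\s(\A)$, the two-line computation of $\partial_\s$, the appeal to Proposition \ref{prop:sd}) is routine. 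I would also note the minor point that Lemma \ref{equivalence} requires working on connected components and that $\partial_\s$ and real analyticity are local properties, so proving the statement component by component suffices.
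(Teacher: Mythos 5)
Your overall strategy is sound and reaches the same conclusion, but along a genuinely different route than the paper for the two key steps, and there is one technical loose end.

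For the fact that $\S^{{}^{-1}}$ is right slice regular, the paper simply cites \cite[Proposition~5.11]{GhiRec18} (and \cite[Theorem~5.9]{GhiRec18} for openness of $\rho_\s(\A)$), whereas you re-derive it from scratch by differentiating $\Q_{r+s\j}(\A)=\Delta_{r+s\j}(\A)^{-1}$ and checking the holomorphy identity \eqref{equa1}. I verified your sketched cancellation and it does work: after grouping terms and using that $\A$ and $\Q_p$ commute, $\tfrac{\partial}{\partial r}\S_p^{{}^{-1}}+\big(\tfrac{\partial}{\partial s}\S_p^{{}^{-1}}\big)\j$ collapses to $2\Q_p-2\Delta_p\Q_p^2=0$. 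That said, you should write out this computation rather than assert it, since the cancellation is the crux of that step. More importantly, your appeal to Lemma~\ref{equivalence} ``component by component'' needs more care: Lemma~\ref{equivalence} requires a \emph{connected} and \emph{conjugation-invariant} $D$ with $D\cap\erre\neq\vuoto$, and the hypothesis $\rho_\s(\A)\cap\erre\neq\vuoto$ only guarantees that \emph{some} component of $D$ meets $\erre$. A component of a conjugation-invariant open set need not itself be conjugation-invariant, and the union of a non-symmetric component with its conjugate is disconnected and disjoint from $\erre$, so Lemma~\ref{equivalence} does not apply there and slice hyperholomorphy need not upgrade to slice regularity on such pieces. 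The paper sidesteps this by citing the prior result; if you want a self-contained argument you need an additional step ruling out such components of $\rho_\s(\A)$.

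For the identification $\Q=-\partial_\s\S^{{}^{-1}}$ your computation at $q\notin\erre$ is the same as the paper's (using $\Q_\qbar=\Q_q$), but at real points your argument is genuinely different and in fact cleaner: you observe that for real $r\in\rho_\s(\A)$ the operator $\A-r\Id$ is bijective with bounded inverse, $\Q_r(\A)=(\A-r\Id)^{-2}$, $\S_r^{{}^{-1}}(\A)=-(\A-r\Id)^{-1}$, and then $\tfrac{\partial}{\partial r}\S_r^{{}^{-1}}(\A)=-(\A-r\Id)^{-2}=-\Q_r(\A)$ directly. The paper instead uses a continuity-plus-closed-graph limit argument (equations \eqref{GD=I}--\eqref{DG=I}): it first invokes Proposition~\ref{prop:sd} to know $\G_q:=-\partial_\s\S^{{}^{-1}}(q)$ is continuous, then shows $\G_r\Delta_r x=x$ for $x\in D(\A^2)$ by taking the limit of $\Q_q\Delta_r x$ as $q\to r$ with $q\notin\erre$, and finally gets $\Delta_r\G_r=\Id$ using $r\in\rho_\s(\A)$. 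Your factorization argument avoids that limit passage entirely; it is shorter, more elementary, and self-evidently rigorous once you note that bijectivity of $\A-r\Id$ (with bounded inverse by the closed graph theorem, as in Lemma~\ref{AQ Lin}) follows from that of $\Delta_r(\A)=(\A-r\Id)^2$. The final appeal to Proposition~\ref{prop:sd} for real analyticity of $\Q$ is the same in both.
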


\begin{proof}
First, we recall that $\rho_\s(\A)$ is open in $\quat$ by \cite[Theorem 5.9]{GhiRec18} and, thanks to the condition $\rho_\s(\A) \cap \erre \neq \vuoto$, the function $\S^{{}^{-1}}$ is right slice regular by \cite[Proposition 5.11]{GhiRec18}.

We briefly set $\Delta_q:=\Delta_q(\A)$ and $\Q_q:=\Q_q(\A)$ for every $q \in \rho_\s(\A)$. Since $\Delta_q = \Delta_\qbar$ for every $q \in \quat$, we have that $\Q_q = \Q_\qbar$ for every $q \in \rho_\s(\A)$, so \eqref{resolvent operator} yields
\begin{equation}\label{Q(q+qb)=(Cb-C)}
  \Q_q = (\S^{{}^{-1}}_\qbar - \Sq)(q - \qbar)^{-1}=-\partial_\s\S^{{}^{-1}}(q) \qquad \forall q \in \rho_\s(\A) \setmeno \erre.
\end{equation}
We also briefly set $\G_q:=-\partial_\s\S^{{}^{-1}}(q)=-\partial_\s\Sq$ for every $q\in\rho_\s(\A)$. By Proposition \ref{prop:sd}, the function $\OO_D\funzione X:q\longmapsto \G_q$ is real analytic and hence continuous. Therefore, bearing in mind \eqref{Q(q+qb)=(Cb-C)}, if $r \in \rho_\s(\A) \cap \erre$ and $x \in D(\A^2)$, we have
\begin{align}
  \G_r\Delta_r x 
  & = \lim_{\erre\not\ni q \to r} \G_q\Delta_r x
  =  \lim_{\erre\not\ni q \to r} \Q_q(\Delta_r - \Delta_q + \Delta_q)x \notag \\
  & =  \lim_{\erre\not\ni q \to r} \Q_q(2\re(q-r)\A x + (r^2 - |q|^2)x + \Delta_qx) \notag \\
  & =  \lim_{\erre\not\ni q \to r} \G_q2\re(q-r)\A x + \G_q(r^2 - |q|^2)x + x =  x. \label{GD=I}
\end{align}
In the latter chain of equalities, we used the fact that, thanks to the triangle inequality, the function $\OO_D\funzione X:q \longmapsto \G_q\phi(q)$ is continuous for every given continuous function $\phi : \rho_\s(\A) \funzione \quat$. As $r \in \rho_\s(\A)$, by \eqref{GD=I}, we infer that for every $x \in X$
\begin{equation}\label{DG=I}
\Delta_r\G_r x = \Delta_r \G_r \Delta_r \Q_r x =  \Delta_r \Q_r x=x.
\end{equation}
By \eqref{GD=I} and \eqref{DG=I}, we have $\G_r = \Q_r$ for every $r \in \rho_\s(\A) \cap \erre$. Thus, $\Q=-\partial_\s\S^{{}^{-1}}$ on the whole $\rho_\s(\A)$ and the proof is complete.
\end{proof}

\begin{Prop}\label{P:sviluppoN}
Let $D(\A)$ be a right $\quat$-submodule of $X$ and let $\A : D(\A) \funzione X$ be a closed right linear operator. Choose $N \in \enne$ and $q_0, q \in \rho_\s(\A)$. Recall that $\triangle_{q_0}(q)=q^2 - 2\re(q_0)q + |q_0|^2$ by definition \eqref{triangle_q}. Then, we have:
\begin{align}
  \Sq(\A) 
    & = \sum_{\substack{0\le n \le 2N,\\ n\; \text{even}}} 
           \Q_{q_0}(\A)^{n/2} \S^{{}^{-1}}_{q_0}(\A)\triangle_{q_0}(q)^{n/2} - 
           \!\!\!\sum_{\substack{0\le n \le 2N+1,\\ n\; \text{odd}}} 
               \Q_{q_0}(\A)^{(n+1)/2} (q-q_0)\triangle_{q_0}(q)^{(n-1)/2}\notag\\
    & \sp + \Q_{q_0}(\A)^{N+1}\Sq(\A)  \triangle_{q_0}(q)^{N+1}. \label{sviluppoN}
\end{align} 
\end{Prop}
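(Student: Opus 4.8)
The plan is to establish \eqref{sviluppoN} by induction on $N$, using the resolvent equation \eqref{eq:qre} of Theorem \ref{T:resolv.eq.} as the sole engine. Before starting, I would note that, since $q,q_0\in\rho_\s(\A)$, all the operators involved belong to $\Lin^\r(X)$, and that the quaternionic factors occurring below — namely $q-q_0$ and the powers of $\triangle_{q_0}(q)=q^2-2\re(q_0)q+|q_0|^2$ — always multiply operators from the right, as in \eqref{operatore per scalare a dx}; moreover $\triangle_{q_0}(q)$ is a polynomial in $q$ with real coefficients, so its powers commute with one another. I abbreviate $\tau:=\triangle_{q_0}(q)$ to shorten the formulas.

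For the base case $N=0$, I would apply \eqref{eq:qre} reading its $p$ as our $q$ and its $q$ as $q_0$; using $q_0-q=-(q-q_0)$ and rearranging yields
\begin{equation*}
  \Sq(\A)=\S^{{}^{-1}}_{q_0}(\A)-\Q_{q_0}(\A)(q-q_0)+\Q_{q_0}(\A)\Sq(\A)\tau,
\end{equation*}
which is exactly \eqref{sviluppoN} for $N=0$: the even sum collapses to the single term $n=0$ (equal to $\S^{{}^{-1}}_{q_0}(\A)$), the odd sum collapses to the single term $n=1$ (equal to $\Q_{q_0}(\A)(q-q_0)$), and the last summand is the remainder term $\Q_{q_0}(\A)\Sq(\A)\tau^{1}$.

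For the inductive step, assuming \eqref{sviluppoN} for some $N\in\enne$, I would take its remainder term $\Q_{q_0}(\A)^{N+1}\Sq(\A)\tau^{N+1}$, substitute into it the base-case identity for $\Sq(\A)$ displayed above, and expand:
\begin{align*}
  \Q_{q_0}(\A)^{N+1}\Sq(\A)\tau^{N+1}
   &=\Q_{q_0}(\A)^{N+1}\S^{{}^{-1}}_{q_0}(\A)\tau^{N+1} \\
   &\quad-\Q_{q_0}(\A)^{N+2}(q-q_0)\tau^{N+1}+\Q_{q_0}(\A)^{N+2}\Sq(\A)\tau^{N+2}.
\end{align*}
I would then identify the first term on the right-hand side as the even-index summand of \eqref{sviluppoN} for $n=2(N+1)$, the second term as the odd-index summand for $n=2(N+1)+1$ (with its minus sign), and the third as the remainder term of \eqref{sviluppoN} with $N$ replaced by $N+1$. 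Feeding this decomposition back into \eqref{sviluppoN} for $N$ produces \eqref{sviluppoN} for $N+1$, closing the induction.

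There is essentially no obstacle here: the whole argument is one application of the resolvent equation followed by a formal substitution. The only thing requiring care is the index bookkeeping — matching the exponents $n/2$, $(n+1)/2$, $(n-1)/2$ together with the parities and signs in \eqref{sviluppoN} — and checking that the noncommutativity of $\quat$ causes no trouble, which it does not, because $q-q_0$ always sits to the left of the (mutually commuting) powers of $\tau$, exactly as in the statement, so the right multiplications compose in the prescribed order without any rearrangement.
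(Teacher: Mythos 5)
Your proof is correct and follows the same route as the paper: induction on $N$, with the base case read off directly from the resolvent equation \eqref{eq:qre} and the inductive step obtained by substituting that same identity into the remainder term $\Q_{q_0}(\A)^{N+1}\Sq(\A)\triangle_{q_0}(q)^{N+1}$ and expanding. The index bookkeeping and the remark that $q-q_0$ stays to the left of the powers of $\triangle_{q_0}(q)$ match the paper's computation exactly.
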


\begin{proof}
In order to shorten the notation, we set $\Q_q := \Q_q(\A)$ and $\Sq := \Sq(\A)$ for every $q \in \rho_\s(\A)$. We argue by induction on $N\in\enne$. By \eqref{eq:qre}, we have:
\begin{equation}
  \Sq = \S^{{}^{-1}}_{q_0} - \Q_{q_0}(q-q_0)+\Q_{q_0}\Sq\triangle_{q_0}(q),
\end{equation}
which is the claim for $N=0$. Let us now assume that the claim is true for $N\in\enne$, that is, \eqref{sviluppoN}
holds, and let us prove it for $N+1$.
Using again \eqref{eq:qre}, we have:
\begin{align}
  & \Q_{q_0}^{N+1}\Sq  \triangle_{q_0}(q)^{N+1}  \notag \\
  & =  \Q_{q_0}^{N+1}(\S^{{}^{-1}}_{q_0} - \Q_{q_0}(q-q_0)+\Q_{q_0}\Sq\triangle_{q_0}(q))  \triangle_{q_0}(q)^{N+1} \notag \\
  & = \Q_{q_0}^{N+1}\S^{{}^{-1}}_{q_0}\triangle_{q_0}(q)^{N+1} 
        - \Q_{q_0}^{N+2}(q-q_0)\triangle_{q_0}(q)^{N+1}
        + \Q_{q_0}^{N+2}\Sq  \triangle_{q_0}(q)^{N+2} \notag \\
   & = \Q_{q_0}^{2(N+1)/2}\S^{{}^{-1}}_{q_0}\triangle_{q_0}(q)^{2(N+1)/2} 
        - \Q_{q_0}^{(2(N+1) + 1 +1)/2}(q-q_0)\triangle_{q_0}(q)^{(2(N+1)+ 1 -1)/2} \notag \\
   & \phantom{=\ }     + \Q_{q_0}^{N+2}\Sq  \triangle_{q_0}(q)^{N+2}. \label{sviluppoN-aux}
\end{align}
Hence, from the inductive assumption \eqref{sviluppoN} and from \eqref{sviluppoN-aux}, we infer that
\begin{align}
\Sq 
    & = \sum_{\substack{0\le n \le 2(N+1),\\ n\; \text{\it even}}} 
           \Q_{q_0}^{n/2}\S^{{}^{-1}}_{q_0}\triangle_{q_0}(q)^{n/2} - 
           \!\!\!\sum_{\substack{0\le n \le 2(N+1)+1,\\ n\; \text{\it odd}}} 
               \Q_{q_0}^{(n+1)/2}(q-q_0)\triangle_{q_0}(q)^{(n-1)/2} \notag\\
    & \phantom{=\ } + \Q_{q_0}^{N+2}\Sq  \triangle_{q_0}(q)^{N+2}, \notag
\end{align}
which is the claim for $N+1$.
\end{proof}

\begin{Prop}\label{C_q series}
Let $D(\A)$ be a right $\quat$-submodule of $X$, let $\A : D(\A) \funzione X$ be a closed right linear operator, let $q_0\in \rho_\s(\A)$ and let $\mr{U}:=\mr{U}(q_0,\norma{\Q_{q_0}(\A)}{}^{-1/2})$ be the Cassini ball of $\quat$ centered at $q_0$ with radius $\norma{\Q_{q_0}(\A)}{}^{-1/2}$ defined in \eqref{cassini}, that is, $\mr{U}=\{q\in\quat:|\triangle_{q_0}(q)| < \norma{\Q_{q_0}(\A)}{}^{-1}\}$. Then, for every $q \in \mr{U}$, we have:
\begin{equation}\label{sviluppo_inf_temp}
  \Sq(\A) 
   = \sum_{\substack{n \in\enne, \\ n\; \text{even}}} 
           \Q_{q_0}(\A)^{n/2} \S^{{}^{-1}}_{q_0}(\A)\triangle_{q_0}(q)^{n/2} - 
           \sum_{\substack{n \in\enne, \\ n\; \text{odd}}} 
               \Q_{q_0}(\A)^{(n+1)/2} (q-q_0)\triangle_{q_0}(q)^{(n-1)/2}.
\end{equation}
\end{Prop}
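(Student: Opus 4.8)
The plan is to obtain \eqref{sviluppo_inf_temp} by letting $N\to\infty$ in the finite expansion \eqref{sviluppoN} of Proposition~\ref{P:sviluppoN}. Fix $q\in\mr{U}$, write $\tau:=\triangle_{q_0}(q)$ and set $\theta:=\norma{\Q_{q_0}(\A)}{}\,\lvert\tau\rvert$; according to the reformulation of $\mr{U}$ recorded in the statement, the condition $q\in\mr{U}$ means exactly $\theta<1$. Using $\lvert\tau^{k}\rvert=\lvert\tau\rvert^{k}$ together with $\norma{\B^{k}}{}\le\norma{\B}{}^{k}$ and $\norma{\B p}{}\le\norma{\B}{}\,\lvert p\rvert$ for $\B\in\Lin^\r(X)$, $p\in\quat$, $k\in\enne$, the $k$-th terms --- the cases $n=2k$ and $n=2k+1$ --- of the two series in \eqref{sviluppo_inf_temp} are bounded in $\Lin^\r(X)$ by $\theta^{k}\norma{\S^{{}^{-1}}_{q_0}(\A)}{}$ and by $\norma{\Q_{q_0}(\A)}{}\,\lvert q-q_0\rvert\,\theta^{k}$, respectively. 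Since $\theta<1$, both series converge absolutely in the Banach space $\Lin^\r(X)$; thus the right-hand side of \eqref{sviluppo_inf_temp} defines an operator $T\in\Lin^\r(X)$, and $T$ is the $\Lin^\r(X)$-limit, as $N\to\infty$, of the partial sums $T_{N}$, that is, of the two finite sums appearing in \eqref{sviluppoN}.

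For this reduction to make sense one needs $q\in\rho_\s(\A)$, so that $\Sq(\A)$ is defined and Proposition~\ref{P:sviluppoN} is applicable; establishing $\mr{U}\subseteq\rho_\s(\A)$ is the step I expect to be the main obstacle. The natural route is to produce a bounded two-sided inverse of the closed (generally unbounded) operator $\Delta_q(\A)$, built from $\Q_{q_0}(\A)$ and the powers $\triangle_{q_0}(q)^{k}$ in the spirit of \eqref{sviluppo_inf_temp} and convergent in $\Lin^\r(X)$ by the same geometric estimate with ratio $\theta<1$; the delicate part is the verification that this operator really inverts $\Delta_q(\A)$ on $X$ and on $D(\A^{2})$, which forces one to interchange the closed operator $\Delta_q(\A)$ with the convergent series (legitimate because each partial sum takes values in $D(\A^{2})$ and its $\Delta_q(\A)$-images converge) and to control a telescoping sum in which the non-commutativity of the quaternionic factors $\triangle_{q_0}(q)^{k}$ must be accounted for. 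Since $\Delta_q(\A)$ and $\lvert\triangle_{q_0}(q)\rvert$ depend on $q$ only through $\re(q)$ and $\lvert q\rvert$, a useful preliminary simplification is to replace $q$ by a point of its $2$-sphere lying in the complex slice $\ci_\j$ containing $q_0$, after which $\Delta_q(\A)$ splits into genuinely commuting factors $(\A-\mu)(\A-\overline{\mu})$ and the problem is transported to the complex Banach space $X_\j$; if moreover $\rho_\s(\A)\cap\erre\neq\vuoto$, one may alternatively argue by extension from real $q$ using the real analyticity of the maps $q\mapsto\Sq(\A),\ \Q_q(\A)$ (Proposition~\ref{prop:sd}, Corollary~\ref{Qq analyt}). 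Once $q\in\rho_\s(\A)$ is secured, $\Sq(\A)\in\Lin^\r(X)$ by Theorem~\ref{T:resolv.eq.}.

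Granting $q\in\rho_\s(\A)$, the conclusion is immediate: for every $N\in\enne$, \eqref{sviluppoN} reads $\Sq(\A)=T_{N}+\Q_{q_0}(\A)^{N+1}\Sq(\A)\triangle_{q_0}(q)^{N+1}$, and the remainder obeys $\norma{\Q_{q_0}(\A)^{N+1}\Sq(\A)\triangle_{q_0}(q)^{N+1}}{}\le\theta^{N+1}\norma{\Sq(\A)}{}\to 0$ as $N\to\infty$, because $\theta<1$. Letting $N\to\infty$ in this identity gives $\Sq(\A)=\lim_{N\to\infty}T_{N}=T$, which is precisely \eqref{sviluppo_inf_temp}. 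In short, once $\mr{U}\subseteq\rho_\s(\A)$ is in hand the passage to the limit is routine, and all the substance of the proof is concentrated in establishing that inclusion and identifying the inverse of $\Delta_q(\A)$ with a convergent series.
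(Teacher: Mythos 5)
Your core argument is exactly the paper's: invoke Proposition~\ref{P:sviluppoN}, bound the summands geometrically using $\theta:=\norma{\Q_{q_0}(\A)}{}\,|\triangle_{q_0}(q)|<1$, bound the remainder by $\theta^{N+1}\norma{\Sq(\A)}{}$, and let $N\to\infty$. Those estimates are correct and match the paper's proof verbatim.

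Where you and the paper part ways is in what you try to prove. You read the statement literally (``for every $q\in\mr{U}$'') and therefore expect to have to show $\mr{U}\subseteq\rho_\s(\A)$ inside this proposition, which you flag as ``the main obstacle'' and then sketch without completing. The paper does not attempt this here: its one-line proof tacitly assumes $q\in\rho_\s(\A)$ (needed for Proposition~\ref{P:sviluppoN} to even make sense), so Proposition~\ref{C_q series} is, in effect, only established for $q\in\rho_\s(\A)\cap\mr{U}$; this is also exactly how it is invoked in the proof of Theorem~\ref{T:main} (``$\F_q=\Sq$ for all $q\in\rho_\s(\A)\cap\mr{U}$''). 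The inclusion $\mr{U}\subseteq\rho_\s(\A)$ is then Theorem~\ref{T:main}$(\mr{i})$, and it is proved afterwards by a different mechanism: one forms $\F_q:=\sum_n(-1)^n\C_{n+1}\spow_{q_0,n}(q)$ and its spherical derivative $\G_q:=-\partial_\s\F(q)$, shows (Lemma~\ref{GinDA2}) that $\G_q$ maps into $D(\A^2)$, and then propagates the identities $\Q_{q_0}(\Delta_q\G_q-\Id)=0$ and $(\G_q\Delta_q-\Id)\Q_{q_0}=0$ from $\rho_\s(\A)\cap\mr{U}$ to all of $\mr{U}$ by the real-analytic continuation Lemma~\ref{vector analytic}$(\mr{i})$. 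Note that this later argument requires the extra hypothesis $\rho_\s(\A)\cap\erre\neq\vuoto$, which Proposition~\ref{C_q series} does \emph{not} assume --- a further indication that the inclusion is not meant to be a consequence of this proposition. Of your two sketched alternatives, the real-analyticity route is the one the paper actually develops (though in a more structured form, via $\partial_\s\F$), while the slice-factorization route $(\A-\mu)(\A-\overline{\mu})$ on $X_\j$ is not used. In short: your limiting argument is the paper's proof; the part you found hard is genuinely hard, but it is deliberately postponed and handled by analytic continuation rather than by directly inverting $\Delta_q(\A)$ inside this proposition.
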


\begin{proof}
The assertion follows from Proposition \ref{P:sviluppoN}, from the Weierstrass $M$-test, and from estimate
\[
  \norma{\Q_{q_0}^{N+1}(\A)\Sq(\A)  \triangle_{q_0}(q)^{N+1}}{} 
    \le \norma{\Sq(\A)}{}\left(\frac{|\triangle_{q_0}(q)|}{\norma{\Q_{q_0}(\A)}{}^{-1}}\right)^{N + 1},
\]
letting $N \to \infty$ in \eqref{sviluppoN}.
\end{proof}

\begin{Lem}\label{def F_q}
Let $D(\A)$ be a right $\quat$-submodule of $X$, let
$\A : D(\A) \funzione X$ be a closed right linear operator, let $q_0\in \rho_\s(\A)$ and let $\mr{U}:=\mr{U}(q_0,\norma{\Q_{q_0}(\A)}{}^{-1/2})=\{q\in\quat:|\triangle_{q_0}(q)| < \norma{\Q_{q_0}(\A)}{}^{-1}\}$. Define the operators $\C_n\in\Lin^\r(X)$ and the functions $\qspow:\quat\lra\quat$ by
\begin{equation}\label{Bn}
  \C_n
  :=
  \begin{cases}
    \Q_{q_0}(\A)^k & \text{if $n=2k$, \  $k \in \enne$,} \\
    \Q_{q_0}(\A)^k\S^{{}^{-1}}_{q_0}(\A) & \text{if $n=2k+1$, \  $k \in \enne$,}
  \end{cases}
\end{equation}
and
\begin{equation}\label{pn}
  \qspow(q) :=
  \begin{cases}
    \triangle_{q_0}(q)^k & \text{if $n=2k$, \ $k \in \enne$,} \\
    (q-q_0)\triangle_{q_0}(q)^k & \text{if $n=2k+1$, \ $k \in \enne$.}
  \end{cases}
\end{equation}
 
Then, for every $q \in \mr{U}$, the series
\begin{equation}\label{tot conv series}
  \F_q := \sum_{n=0}^{\infty}(-1)^n \C_{n+1}
  \qspow(q) 
\end{equation}
is absolutely convergent in $\Lin^\r(X)$, and the mapping 
$\F : \mr{U}\funzione \Lin^\r(X) : q \longmapsto \F_q$ defines a right slice regular function, according to Definition \ref{D:Xslice regularity}. Moreover, for every $q \in \mr{U}$, we have:
$\F_q x \in D(\A^2)\subset D(\A)$ for every $x \in X$, $\A\F_q \in \Lin^r(X)$, $\A^2\F_q \in \Lin^r(X)$, and 
\begin{equation}\label{AF,A2F}
  \A\F_q = \sum_{n=1}^\infty (-1)^n\A\C_{n+1} \qspow(q), \qquad
  \A^2 \F_q = \sum_{n=0}^\infty (-1)^n\A^2\C_{n+1} \qspow(q),
\end{equation}
with absolutely convergent series in $\Lin^\r(X)$.
\end{Lem}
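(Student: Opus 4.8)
The plan is to split the verification into three essentially independent parts: (a) absolute and locally uniform convergence of the series \eqref{tot conv series} and of the two series in \eqref{AF,A2F}; (b) right slice regularity of the sum $\F$; (c) the domain inclusion $\F_q x\in D(\A^2)$ together with the operator identities \eqref{AF,A2F}. For (a), I would bound the general term of \eqref{tot conv series} using submultiplicativity of the operator norm together with \eqref{Bn} and \eqref{pn}: its norm is at most $\norma{\S^{{}^{-1}}_{q_0}(\A)}{}\bigl(\norma{\Q_{q_0}(\A)}{}\,|\triangle_{q_0}(q)|\bigr)^{k}$ when $n=2k$ and at most $\norma{\Q_{q_0}(\A)}{}\,|q-q_0|\bigl(\norma{\Q_{q_0}(\A)}{}\,|\triangle_{q_0}(q)|\bigr)^{k}$ when $n=2k+1$. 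Since by \eqref{cassini} the assumption $q\in\mr{U}$ means precisely that $\theta(q):=\norma{\Q_{q_0}(\A)}{}\,|\triangle_{q_0}(q)|<1$, both the even- and the odd-indexed subseries are dominated by $\sum_k\theta(q)^k$; completeness of $\Lin^\r(X)$ then yields absolute convergence of \eqref{tot conv series} to a well-defined $\F_q\in\Lin^\r(X)$, and, the estimates being uniform for $q$ in compact subsets of $\mr{U}$, the Weierstrass $M$-test gives uniform convergence there. The series in \eqref{AF,A2F} are handled by the same estimates once part (c) supplies the boundedness and norm bounds for $\A\C_{n+1}$ and $\A^2\C_{n+1}$.

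For (b), I would first note that each $\qspow$ is right slice regular: $\triangle_{q_0}$, and hence each function $q\mapsto\triangle_{q_0}(q)^k$, is induced by a stem function with $\erre$-valued components; $q\mapsto q-q_0$ is right slice regular with stem function $(\re(z)-q_0,\im(z))$; and the pointwise product of a right slice regular function with one induced by an $\erre$-valued stem function is again right slice regular. Hence $\qspow$ has an explicit $C^\infty$ stem function $(\alpha_n,\beta_n)$, and, since the bounded right linear operator $\C_{n+1}$ commutes with $\partial_r$ and $\partial_s$, the map $q\mapsto\C_{n+1}\qspow(q)$ is an $\Lin^\r(X)$-valued right slice regular function with stem function $(\C_{n+1}\alpha_n,\C_{n+1}\beta_n)$. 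Writing $\mr{U}=\OO_D$ for the open, conjugation-invariant $D\subset\ci$ whose circularization is $\mr{U}$ (see the discussion after Theorem~\ref{T:resolv.eq.}), the partial sums of \eqref{tot conv series} are right slice regular and, by (a), their stem functions converge uniformly on compact subsets of $D$. Since a locally uniform limit of $\Lin^\r(X)$-valued holomorphic functions is holomorphic, with derivatives the corresponding limits --- this is where Proposition~\ref{char-Y-holom}, Corollary~\ref{C-diff->C-an} and Lemma~\ref{vector analytic} enter --- the limiting pair $(\F_1,\F_2):=\bigl(\sum_n(-1)^n\C_{n+1}\alpha_n,\ \sum_n(-1)^n\C_{n+1}\beta_n\bigr)$ is a $C^1$ stem function solving the Cauchy--Riemann equation \eqref{equa2}, with $\F_\j=\F_1+\F_2\j$ for all $\j\in\su$; hence $\F$ is right slice regular in the sense of Definition~\ref{D:Xslice regularity}.

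For (c), I would rewrite $\Delta_{q_0}(\A)\Q_{q_0}(\A)=\Id$ as $\A^2\Q_{q_0}(\A)=\Id+2\re(q_0)\A\Q_{q_0}(\A)-|q_0|^2\Q_{q_0}(\A)$, which belongs to $\Lin^\r(X)$ by Lemma~\ref{AQ Lin}, and note similarly that $\A\S^{{}^{-1}}_{q_0}(\A)=\A\Q_{q_0}(\A)\overline{q_0}-\A^2\Q_{q_0}(\A)\in\Lin^\r(X)$; distributing $\A$ (resp.\ $\A^2$) onto the leftmost $\Q_{q_0}(\A)$-factor of $\C_{n+1}$ in \eqref{Bn} then shows that $\A\C_{n+1},\A^2\C_{n+1}\in\Lin^\r(X)$, with norms dominated by a constant depending only on $q_0$ times $\norma{\Q_{q_0}(\A)}{}^{\lfloor n/2\rfloor}$, so the two series in \eqref{AF,A2F} converge absolutely in $\Lin^\r(X)$ on $\mr{U}$. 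Finally, fixing $x\in X$, the partial sums of \eqref{tot conv series} evaluated at $x$ converge to $\F_q x$, and their images under $\A$ and under $\A^2$ converge as well; since $\A$ and $\A^2$ are closed, this forces $\F_q x\in D(\A^2)$ and the identities \eqref{AF,A2F}, and in particular $\A\F_q,\A^2\F_q\in\Lin^\r(X)$. I expect the delicate point to be the limit passage in (b) --- confirming that $\F$ is genuinely slice regular (that $(\F_1,\F_2)$ is of class $C^1$ and solves \eqref{equa2}, not merely continuous) --- which is exactly what the vector-valued analytic function theory of the Appendix is for; part (c) is then routine, if somewhat lengthy, bookkeeping with closed operators and absolutely convergent series.
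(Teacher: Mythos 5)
Your proposal is correct and follows essentially the same route as the paper's proof: the same geometric-series estimates on $\norma{\C_{n+1}\qspow(q)}{}$ give absolute convergence, slice regularity is inherited from the polynomial building blocks $\qspow$ via locally uniform convergence of stem functions, and the closedness of $\A$ applied to the partial sums $x_N$ yields $\F_qx\in D(\A^2)$ together with \eqref{AF,A2F}. The only difference is that you spell out the stem-function limit argument in (b) more explicitly than the paper, which simply cites \cite[Section~5.1]{GhiRec18} and declares the conclusion immediate; the paper also records the precise estimates \eqref{stima 1}--\eqref{stima 4} with one factor of $\Q_{q_0}(\A)$ absorbed into $\A\Q_{q_0}(\A)$ (resp.\ two into $\A^2\Q_{q_0}(\A)^2$), whereas your stated exponent $\lfloor n/2\rfloor$ is off by the absorbed factor, but this is harmless since your $q_0$-dependent constant can absorb the discrepancy.
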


\begin{proof}
We briefly set $\Q_q := \Q_q(\A)$ and $\Sq := \Sq(\A)$ for every $q \in \rho_\s(\A)$. If $n = 2k$ for some $k \in \enne$, we have
\begin{align}
  \norma{\C_{n+1} \qspow(q)}{} 
  & =   \norma{\Q_{q_0}^k \S^{{}^{-1}}_{q_0}\triangle_{q_0}(q)^k}{}  \le (\norma{\Q_{q_0}}{}|\triangle_{q_0}(q)|)^k \norma{\S^{{}^{-1}}_{q_0}}{}. \notag
\end{align}
If $n = 2k+1$ for some $k \in \enne$, we have
\begin{align}
  \norma{\C_{n+1} \qspow(q)}{} 
  & =   \norma{\Q_{q_0}^{k+1} (q-q_0)\triangle_{q_0}(q)^k}{}  \le (\norma{\Q_{q_0}}{}|\triangle_{q_0}(q)|)^k\norma{\Q_{q_0}}{}|q-q_0|. \notag
\end{align}
Thus, the series \eqref{tot conv series} is absolutely convergent if $q\in\mr{U}$. Moreover, the functions $\qspow$ are polynomials with coefficients on the left, hence they are right slice regular (cf.\ \cite[Section~5.1]{GhiRec18}). It immediately follows that $\F$ is also right slice regular.

Pick $q \in\mr{U}$ and $x \in X$. Since $(-1)^n\C_{n+1}\qspow(q)x \in D(\A^2)$ for every $n \in \enne$, it holds:
\begin{equation}
  x_N := \sum_{n=0}^N (-1)^n\C_{n+1} \qspow(q)x \in D(\A^2) \subset D(\A) \qquad \forall N \in \enne.
\end{equation}
Observe that the series $\sum_{n=0}^\infty(-1)^n\A\C_{n+1} \qspow(q)x$ converges in $X$. Indeed, for $n = 2k$, $k \in \enne$, $k\geq1$, bearing in mind that $\A\Q_{q_0} \in \Lin^r(X)$ by Lemma \ref{AQ Lin}, we have
\begin{align}\label{stima 1}
  \norma{\A\C_{n+1} \qspow(q)x}{} 
  & =   \norma{\A\Q_{q_0}^k \S^{{}^{-1}}_{q_0}\triangle_{q_0}(q)^kx}{} \notag \\
  & \le  \norma{\A\Q_{q_0}}{}(\norma{\Q_{q_0}}{}|\triangle_{q_0}(q)|)^{k-1} \norma{\S^{{}^{-1}}_{q_0}}{}|\triangle_{q_0}(q)|\norma{x}{}, 
\end{align}
and, for $n = 2k+1$, $k\in \enne$, $k\geq1$, we have
\begin{align}\label{stima 2}
  \norma{\A\C_{n+1} \qspow(q)x}{} 
  & =   \norma{\A\Q_{q_0}^{k+1} (q-q_0)\triangle_{q_0}(q)^kx}{} \notag \\
  & \le \norma{\A\Q_{q_0}}{}(\norma{\Q_{q_0}}{}|\triangle_{q_0}(q)|)^{k-1}
   \norma{\Q_{q_0}}{}|q-q_0||\triangle_{q_0}(q)|\norma{x}{}. 
\end{align}
Therefore,
\[
  \lim_{N\to \infty} \A x_N = \lim_{N\to \infty}\sum_{n=0}^N (-1)^n\A\C_{n+1} \qspow(q)x = 
  \sum_{n=0}^\infty (-1)^n\A\C_{n+1} \qspow(q)x,
\]
and, by the closedness of $\A$, we infer that 
\[
  \F_q x = \lim_{N\to \infty} x_N = \sum_{n=0}^\infty (-1)^n\C_{n+1} \qspow(q)x  \in D(\A),
\] 
and
\begin{equation}\label{AF=}
  \A\F_q x = \sum_{n=1}^\infty (-1)^n\A\C_{n+1} \qspow(q)x.
\end{equation}

We also have that 
\begin{equation}
  \A x_N = \sum_{n=0}^N (-1)^n\A\C_{n+1} \qspow(q)x \in D(\A) 
  \qquad \forall N \in \enne
\end{equation}
and the series $\sum_{n=0}^\infty(-1)^n\A^2\C_{n+1} \qspow(q)x$ converges in $X$. Indeed, it holds 
$\A^2\Q_{q_0}^2 = \A\A\Q_{q_0}\Q_{q_0} = \A\Q_{q_0}\A\Q_{q_0} \in \Lin^r(X)$, thus for 
$n = 2k$, $k \in \enne$, $k\geq2$, we have
\begin{align}\label{stima 3}
  \norma{\A^2\C_{n+1} \qspow(q)x}{} 
  & =   \norma{\A^2\Q_{q_0}^k \S^{{}^{-1}}_{q_0}\triangle_{q_0}(q)^kx}{} \notag \\
  & \le   \norma{\A^2\Q_{q_0}^2}{}(\norma{\Q_{q_0}}{}|\triangle_{q_0}(q)|)^{k-2} \norma{\S^{{}^{-1}}_{q_0}}{}|\triangle_{q_0}(q)|^{2}\norma{x}{}.
\end{align}
and for $n = 2k+1$, $k \in \enne$, $k\geq2$, we have
\begin{align}\label{stima 4}
  \norma{\A^2\C_{n+1} \qspow(q)x}{} 
  & =   \norma{\A^2\Q_{q_0}^{k+1} (q-q_0)\triangle_{q_0}(q)^kx}{} \notag \\
  & \le \norma{\A^2\Q_{q_0}^2}{}(\norma{\Q_{q_0}}{}|\triangle_{q_0}(q)|)^{k-2}
   \norma{\Q_{q_0}}{}|q-q_0||\triangle_{q_0}(q)|^2\norma{x}{}.
\end{align}
Therefore,
\[
  \lim_{N \to \infty} \A^2 x_N = \lim_{N \to \infty}\sum_{n=0}^N (-1)^n\A^2\C_{n+1} \qspow(q)x = 
   \sum_{n=0}^\infty (-1)^n\A^2\C_{n+1} \qspow(q)x,
\]
and, by the closedness of $\A$, we get
\[
  \A\F_qx = \sum_{n=0}^\infty (-1)^n\A\C_{n+1} \qspow(q)x = \lim_{N \to \infty} \A x_N \in D(\A),
\]
and also
\begin{equation}\label{A2F=}
  \A^2 \F_qx = \sum_{n=0}^\infty (-1)^n\A^2\C_{n+1} \qspow(q)x.
\end{equation}
Finally, estimates \eqref{stima 1}, \eqref{stima 2}, \eqref{stima 3} and \eqref{stima 4} also prove that the series in \eqref{AF,A2F} are absolutely convergent series in $\Lin^\r(X)$. Hence, \eqref{AF,A2F} holds true by virtue of \eqref{AF=} and \eqref{A2F=}.
\end{proof}

\begin{Lem}\label{GinDA2}
Let $D(\A)$, $\A:D(\A)\funzione X$, $q_0\in\rho_\s(\A)$, $\mr{U}$ and $\F : \mr{U}\funzione \Lin^\r(X):q\longmapsto \F_q$ be as in Lemma \ref{def F_q}. Define 
$\G : \mr{U}\funzione X:q\longmapsto \G_q$ by $\G_q:=-\partial_\s\F(q)$. Then, $\G_qx \in D(\A^2)$ for every $q\in\mr{U}$ and $x \in X$.
\end{Lem}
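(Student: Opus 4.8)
The plan is to reduce the claim to the properties of $\F$ already proved in Lemma~\ref{def F_q} together with the closedness of $\A$, treating separately the non-real points of $\mr{U}$, where $\partial_\s$ is an explicit difference quotient, and the real points of $\mr{U}$, which will be reached by a limiting argument.

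First I would record that, besides $\F$, the operator-valued functions $q\longmapsto\A\F_q$ and $q\longmapsto\A^2\F_q$ on $\mr{U}$ are right slice regular. Indeed, by \eqref{AF,A2F} they are sums of series of the right slice regular operator polynomials $q\longmapsto(-1)^n\A\C_{n+1}\qspow(q)$ and $q\longmapsto(-1)^n\A^2\C_{n+1}\qspow(q)$, and the estimates \eqref{stima 1}--\eqref{stima 4} show that these series converge uniformly on the compact subsets of $\mr{U}$, so their sums are right slice regular. Since $\Lin^\r(X)$ is itself a Banach two-sided $\quat$-module and $\mr{U}$ is the circularization of a conjugation-invariant open subset of $\ci$, Proposition~\ref{prop:sd} applies with $\Lin^\r(X)$ in the role of $X$; hence $\partial_\s\F$, $\partial_\s(\A\F)$ and $\partial_\s(\A^2\F)$ are real analytic, in particular continuous, on $\mr{U}$.

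Next I would treat a point $q\in\mr{U}\setmeno\erre$. Since $\mr{U}$ is axially symmetric, $\qbar\in\mr{U}$, and Lemma~\ref{def F_q} gives $\F_q(y),\F_\qbar(y)\in D(\A^2)$ for every $y\in X$; therefore
\begin{equation*}
\G_q x=-\partial_\s\F(q)\,x=-\F_q\bigl((q-\qbar)^{-1}x\bigr)+\F_\qbar\bigl((q-\qbar)^{-1}x\bigr)\in D(\A^2).
\end{equation*}
Moreover, since left composition with $\A$ (and with $\A^2$) commutes with right scalar multiplication, the same computation gives the operator identities $\A\G_q=-\partial_\s(\A\F)(q)$ and $\A^2\G_q=-\partial_\s(\A^2\F)(q)$ on $\mr{U}\setmeno\erre$. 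If $\mr{U}\cap\erre=\vuoto$, this already proves the lemma.

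Finally, for $r\in\mr{U}\cap\erre$ I would pick $q_m\in\mr{U}\setmeno\erre$ with $q_m\to r$ and pass to the limit using the continuity from the first step and the closedness of $\A$: from $\G_{q_m}x\to\G_r x$ and $\A\G_{q_m}x=-\partial_\s(\A\F)(q_m)x\to-\partial_\s(\A\F)(r)x$ one gets $\G_r x\in D(\A)$ with $\A\G_r x=-\partial_\s(\A\F)(r)x$, and then from $\A\G_{q_m}x\to\A\G_r x$ and $\A^2\G_{q_m}x=-\partial_\s(\A^2\F)(q_m)x\to-\partial_\s(\A^2\F)(r)x$ one gets $\A\G_r x\in D(\A)$, i.e.\ $\G_r x\in D(\A^2)$. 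The step needing the most care is the first one: checking that $\A\F$ and $\A^2\F$ really are slice regular so that their spherical derivatives are continuous up to the real points of $\mr{U}$, and that Proposition~\ref{prop:sd}, although stated for the fixed module $X$, applies verbatim to functions with values in $\Lin^\r(X)$. Alternatively, one may avoid this by differentiating the series for $\F$ spherically term by term, obtaining $\G_q=\sum_{n\ge1}(-1)^{n+1}\C_{n+1}\partial_\s\qspow(q)$ (the $n=0$ term vanishes), and then pushing $\A$ and $\A^2$ through this uniformly convergent series exactly as in the proof of Lemma~\ref{def F_q}, using $\C_{n+1}(X)\subseteq D(\A^2)$ for $n\ge1$ and $\A\Q_{q_0}\in\Lin^\r(X)$; the cost of this route is the need for size estimates on $|\partial_\s\qspow(q)|$.
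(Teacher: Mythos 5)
Your argument is correct and follows essentially the same route as the paper's proof: establish slice regularity of $\A\F$ and $\A^2\F$, invoke Proposition~\ref{prop:sd} to get continuity of $\partial_\s(\A\F)$ and $\partial_\s(\A^2\F)$, read off the claim at non-real points directly from Lemma~\ref{def F_q}, and reach real points by two applications of the closedness of $\A$ along a sequence. The only slight addition is that you explicitly flag (and correctly resolve) the point that Proposition~\ref{prop:sd} is being applied with $\Lin^\r(X)$ in place of $X$, which the paper leaves implicit.
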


\begin{proof}
We first observe that, for $h=1,2$, the mapping 
$\A^h\F : \mr{U} \funzione \Lin^\r(X): q \longmapsto \A^h\F_q$
is right slice regular, because the polynomial functions $\qspow:\quat\funzione\quat$ are right slice regular (cf.\ \cite[Section~5.1]{GhiRec18}). By Proposition \ref{prop:sd}, we can define the real analytic functions $\M : \mr{U} \funzione \Lin^\r(X): q \longmapsto \M_q$ and $\N : \mr{U} \funzione \Lin^\r(X): q \longmapsto \N_q$ by $\M:=-\partial_\s(\A\F)$ and $\N:=-\partial_\s(\A^2\F)$. Since a real analytic function is also continuous, we have:
\begin{equation}
  \M_q := 
  \begin{cases}
    (\A\F_\qbar - \A\F_q)(q - \qbar)^{-1} 
      & \text{if $q \in \mr{U} \setmeno \erre$}, \\
    \lim_{\erre\not\ni p \to q} (\A\F_\pbar - \A\F_p)(p - \pbar)^{-1}
      & \text{if $q \in \mr{U} \cap \erre$},
  \end{cases}
\end{equation}  
and
\begin{equation}
  \N_q := 
  \begin{cases}
    (\A^2\F_\qbar - \A^2\F_q)(q - \qbar)^{-1} 
      & \text{if $q \in \mr{U} \setmeno \erre$}, \\
    \lim_{\erre\not\ni p \to q} (\A^2\F_\pbar - \A^2\F_p)(p - \pbar)^{-1}
      & \text{if $q \in \mr{U} \cap \erre$},
  \end{cases}
\end{equation}  
Let $x\in X$. Clearly, $\G_q x=(\F_\qbar - \F_q)(q - \qbar)^{-1}x  \in D(\A^2)$ for every $q \in \mr{U} \setmeno \erre$ by Lemma \ref{def F_q}. We are left to deal with the case $q=r \in \mr{U} \cap \erre$. Let us apply the closedness of $\A$ along a sequence 
$p=p_n \in \mr{U} \setmeno \erre$ converging to $r$. It holds: $(\F_\pbar - \F_p)(p-\pbar)^{-1}x \in D(\A^2)$ for every $p$, $\lim_{p\to r}(\F_\pbar - \F_p)(p-\pbar)^{-1}x = \G_rx$, and
$\lim_{p\to r} \A(\F_\pbar - \F_p)(p-\pbar)^{-1}x = \lim_{p\to r} (\A\F_\pbar - \A\F_p)(p-\pbar)^{-1}x = \M_rx$. Therefore, by the closedness of $\A$, we infer that $\G_rx \in D(\A)$ and $\A\G_rx = \M_rx$. Moreover, we have: $(\A\F_\pbar - \A\F_p)(p-\pbar)^{-1}x \in D(\A)$ for every $p\in\mr{U} \setmeno \erre$, $\lim_{p\to r}(\A\F_\pbar - \A\F_p)(p-\pbar)^{-1}x = \M_rx$, and $\lim_{p\to r}\A(\A\F_\pbar - \A\F_p)(p-\pbar)^{-1}x =\lim_{p\to r}(\A^2\F_\pbar - \A^2\F_p)(p-\pbar)^{-1}x = \N_rx$, therefore the closedness of $\A$ yields
$\M_rx \in D(\A)$, so $\A\G_rx = \M_rx\in D(\A)$. It follows that $\G_r x \in D(\A^2)$.
\end{proof}

We are now ready to prove our second main result.

\begin{proof}[Proof of Theorem \ref{T:main}]
Let $D(\A)$, $\A : D(\A) \funzione X$ with $\rho_\s(\A) \cap \erre \neq \vuoto$, $q_0\in \rho_\s(\A)$ and $R:=\norma{\Q_{q_0}(\A)}{}^{-1/2}>0$ be as in the statement of the theorem. Set $\mr{U}:=\mr{U}(q_0,R)$. As we have already observed in the proof Corollary \ref{Qq analyt}, $\rho_\s(\A)$ is open by \cite[Theorem 5.9]{GhiRec18} and, thanks to the condition $\rho_\s(\A) \cap \erre \neq \vuoto$, the function $\rho_\s(\A)\funzione X:q \longmapsto \Sq$ is right slice regular by \cite[Proposition 5.11]{GhiRec18}. Let $\F : \mr{U} \funzione \Lin^\r(X) : q \longmapsto F_q$ be the right slice regular function defined as in \eqref{tot conv series}, that is,
\begin{equation}\label{Fq}
  \F_q :=  \sum_{n=0}^{\infty}(-1)^n \C_{n+1}\qspow(q)
  \qquad \forall q \in \mr{U},
\end{equation}
where $\B_{n}$ and $\qspow(q)$ are defined in \eqref{Bn} and \eqref{pn}. By Proposition \ref{C_q series}, we infer that
\begin{equation}\label{F=C}
  \F_q = \Sq \qquad \forall q \in \rho_\s(\A) \cap \mr{U}.
\end{equation}
Thanks to Proposition \ref{prop:sd}, we can define the real analytic function $\G : \mr{U} \funzione \Lin^\r(X) : q \longmapsto \G_q$ by  $\G_q:=-\partial_\s\F(q)$. Therefore, by Corollary \ref{Qq analyt}, we have: 
\begin{equation}\label{G=Q}
  \G_q = \Q_q \qquad \forall q \in \rho_\s(\A) \cap \mr{U},
\end{equation}
where $\Q_q:=\Q_q(\A)$. Thanks to Lemma \ref{GinDA2}, we know that $\G_q(X) \subseteq D(\A^2)$ for every 
$q \in \mr{U}$. Thus, from \eqref{G=Q}, we deduce:
\begin{equation}\label{analyt on the right}
  \Q_{q_0}(\Delta_q \G_q - \Id) = 0 \qquad 
  \forall q \in \rho_\s(\A) \cap \mr{U}
\end{equation}
and
\begin{equation}\label{analyt on the left}
  (\G_q \Delta_q - \Id)\Q_{q_0} = 0 \qquad \forall q \in \rho_\s(\A) \cap \mr{U}.
\end{equation}
Observe that $\rho_\s(\A) \cap \mr{U}$
is an open subset of $\mr{U}$ containing $q_0$, so it is non-empty.
For every $q \in \mr{U}$, it holds: 
\begin{align}
  \Q_{q_0}\Delta_q \G_q  
  & = \Q_{q_0}(\Delta_{q} - \Delta_{q_0} + \Delta_{q_0})  \G_q  \notag \\
  & =  \Q_{q_0}\big(2\re(q_0-q)\A + (|q|^2 - |q_0|^2)\Id + \Delta_{q_0}\big)\G_q   \notag \\
  & =  (2\re(q_0-q)\Q_{q_0}\A  + (|q|^2 - |q_0|^2)\Q_{q_0} + \Q_{q_0}\Delta_{q_0})\G_q   \notag \\
  & =  (2\re(q_0-q)\A\Q_{q_0}  + (|q|^2 - |q_0|^2)\Q_{q_0}  + \Id)\G_q x  \notag \\
  & =  (2\re(q_0-q)(\Q_{q_0}\qbar_0-\S^{{}^{-1}}_{q_0})  + (|q|^2 - |q_0|^2)\Q_{q_0} + \Id)\G_q  \notag \\
  & =  2(\Q_{q_0}\qbar_0-\S^{{}^{-1}}_{q_0})\G_q\re(q_0-q) + \Q_{q_0}\G_q(|q|^2 - |q_0|^2)    
         + \G_q,  \notag 
\end{align}
so the function $\mr{U}\funzione X:q\longmapsto \Q_{q_0}(\Delta_q \G_q - \Id)=\Q_{q_0}\Delta_q \G_q-\Q_{q_0}$ is real analytic, whose vanishing set contains the non-empty open subset $\rho_\s(\A)\cap\mr{U}$ of $\mr{U}$ by \eqref{analyt on the right}. Lemma \ref{vector analytic} now implies that
\[
  \Q_{q_0}(\Delta_q \G_q - \Id) = 0 \qquad \forall q \in \mr{U},
\]
which proves that $\G_q$ is a right inverse of $\Delta_q$ for every $q \in\mr{U}$. Similarly, we have:
\begin{align}
  \G_q \Delta_q\Q_{q_0} 
  & =  \G_q(\Delta_{q} - \Delta_{q_0} + \Delta_{q_0}) \Q_{q_0}  \notag \\
  & =  \G_q\big(2\re(q_0-q)\A + (|q|^2 - |q_0|^2)\Id + \Delta_{q_0}\big)\Q_{q_0}   \notag \\
  & =  \G_q(2\re(q_0-q)\A\Q_{q_0}  + (|q|^2 - |q_0|^2)\Q_{q_0} + \Id   \notag \\
  & =  \G_q(\Q_{q_0}\qbar_0-\S^{{}^{-1}}_{q_0})2\re(q_0-q)  + \G_q\Q_{q_0}(|q|^2 - |q_0|^2) + \G_q  \notag  
\end{align}
so the function $\mr{U}\funzione X:q\longmapsto (\G_q \Delta_q - \Id)\Q_{q_0}= \G_q\Delta_q\Q_{q_0}-\Q_{q_0}$ is real analytic, whose vanishing set contains the non-empty open subset $\rho_\s(\A)\cap\mr{U}$ of $\mr{U}$ by \eqref{analyt on the left}. Lemma \ref{vector analytic} implies that
\[
  (\G_q\Delta_q - \Id)\Q_{q_0} = 0 \qquad \forall q \in \mr{U},
\]
which proves that $\G_q$ is a left inverse of $\Delta_q$ for every $q \in \mr{U}$. It follows that $\mr{U} \subset \rho_\s(\A)$ and the theorem follows from \eqref{Fq} and \eqref{F=C}.
\end{proof}

Let us give the proofs of the corollaries.

\begin{proof}[Proof of Corollary \ref{cor3}]
Immediate from Theorem \ref{T:main}$(\mr{ii})$ and Corollary \ref{Qq analyt}.
\end{proof}

\begin{proof}[Proof of Corollary \ref{cor1}]
Immediate from Theorem \ref{T:main}$(\mr{i})$.
\end{proof}

\begin{proof}[Proof of Corollary \ref{cor2}]
If $q_0\in\rho_\s(\A)=\quat\setmeno\sigma_\s(\A)$, then the continuity of the mapping $\rho_\s(\A)\funzione\Lin^r(X):q\longmapsto \Q_q(\A)$ proved in Corollary \ref{cor3} ensures that $\lim_{n\to \infty}\norma{\Q_{p_n}(\A)}{} = \norma{\Q_{q_0}(\A)}{}<\infty$.

Suppose that $\lim_{n\to \infty}\norma{\Q_{p_n}(\A)}{}\neq\infty$ and $q_0\in\sigma_\s(\A)$. Then, there exist $L\in\erre$ and a subsequence $\{p_{n_k}\}_{k\in\enne}$ of $\{p_n\}_{n\in\enne}$ such that $\lim_{k\to \infty}\norma{\Q_{p_{n_k}}(\A)}{}=L\geq0$. By Corollary \ref{cor1}, we know that $\mr{u}(\sigma_\s(\A),p_{n_k})\geq\norma{\Q_{p_{n_k}}(\A)}{}^{-1/2}$ so $\mr{u}(\sigma_\s(\A),p_{n_k})\geq(L+1)^{-1/2}>0$ for every $k\geq h$, where $h$ is a sufficiently large natural number. As $q_0\in\sigma_\s(\A)$, we have that $\mr{u}(\sigma_\s(\A),p_{n_k})\to\mr{u}(\sigma_\s(\A),q_0)=0$ as $k\to\infty$. Thus, we deduce that $0\geq(L+1)^{-1/2}$, which is a contradiction.
\end{proof}

\section{Appendix: Vector-valued analytic functions}\label{appendix}

The aim of this appendix is to prove that a vector-valued $\ci$-differentiable function is also real analytic in the sense specified by Definition \ref{real analytic}. We start with some characterizations of vector-valued holomorphy which are not often explicitly stated and proved in the literature.

\begin{Prop}\label{char-Y-holom}
Let $D$ be a non-empty open subset of $\ci$, let $Y$ be a complex Banach space and let $g : D \funzione Y$ be a function. Then, the following conditions are equivalent:
\begin{itemize}
\item[$(\mr{i})$] $g$ is holomorphic, that is , $g$ is $\ci$-differentiable at each point of $D$.
\item[$(\mr{ii})$] $g$ is of class $C^1$ (in the real sense) and $\frac{\partial g}{\partial \overline{z}}:=\frac{1}{2}\big(\frac{\partial g}{\partial r} + i\frac{\partial g}{\partial s}\big) = 0$ on $D$, where $z=r+si$  with $r,s\in\erre$.
\item[$(\mr{iii})$] $g$ is analytic on $D$. More precisely, if $z_0 \in D$ and $B(z_0,\delta_0):=\{z \in \ci:|z-z_0| < \delta\}$ is contained in $D$ for some $\delta_0 > 0$, then $g(z) = \sum_{n = 0}^\infty (z-z_0)^n a_n$ for every $z \in B(z_0,\delta_0)$, where $a_n = (2\pi i)^{-1}\int_{\partial B(z_0,\delta)} (z-z_0)^{-n-1}g(z)\de z$ for any $\delta \in \opint{0,\delta_0}$ (the integral being meant in the Bochner sense, cf.\ e.g.\ \cite[Chapter~VI]{Lan93}). 
\end{itemize}
\end{Prop}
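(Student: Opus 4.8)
The plan is to prove the equivalence $(\mr{i})\Leftrightarrow(\mr{ii})\Leftrightarrow(\mr{iii})$ by adapting the classical scalar-valued arguments to the Banach-space-valued setting, with the crucial observation that the Bochner integral behaves like the ordinary integral with respect to the estimates and the fundamental theorem of calculus that are needed. First I would record the easy implication $(\mr{iii})\Rightarrow(\mr{i})$: a locally uniformly convergent power series with coefficients in $Y$ can be differentiated term by term in the $\ci$-sense, exactly as in the scalar case, so its sum is $\ci$-differentiable on $B(z_0,\delta_0)$ and therefore on all of $D$. The implication $(\mr{i})\Rightarrow(\mr{ii})$ is also straightforward: $\ci$-differentiability at a point forces, writing the difference quotient along the real and imaginary directions, that the partial derivatives $\frac{\partial g}{\partial r}$ and $\frac{\partial g}{\partial s}$ exist and satisfy $\frac{\partial g}{\partial s}=i\frac{\partial g}{\partial r}$, i.e. $\frac{\partial g}{\partial\overline z}=0$; the $C^1$ regularity then follows a posteriori once we have $(\mr{iii})$, or one can argue directly that $\ci$-differentiability on an open set implies continuity of the derivative by the Cauchy integral representation, so to avoid circularity I would phrase $(\mr{i})\Rightarrow(\mr{ii})$ as: $\ci$-differentiable $\Rightarrow$ the Cauchy-Goursat theorem and Cauchy integral formula hold $\Rightarrow$ $g$ equals its Cauchy integral, which is visibly $C^\infty$ and satisfies the Cauchy-Riemann equation.

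The heart of the matter is $(\mr{i})\Rightarrow(\mr{iii})$, and more precisely establishing the Cauchy integral formula for $Y$-valued holomorphic functions. Here I would reduce to the scalar case by composing with continuous linear functionals: for every $\Phi\in Y^*$, the function $\Phi\circ g:D\funzione\ci$ is holomorphic in the ordinary sense (the difference quotient of $\Phi\circ g$ is $\Phi$ applied to the difference quotient of $g$, which converges because $\Phi$ is continuous). Therefore $\Phi\circ g$ satisfies the classical Cauchy integral formula and the classical power series expansion with coefficients $\Phi(a_n)$, where $a_n=(2\pi i)^{-1}\int_{\partial B(z_0,\delta)}(z-z_0)^{-n-1}g(z)\,\de z$ and $\Phi$ commutes with the Bochner integral (a standard property of the Bochner integral, see \cite[Chapter~VI]{Lan93}). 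Since $\Phi\bigl(g(z)-\sum_{n=0}^\infty(z-z_0)^na_n\bigr)=0$ for every $\Phi\in Y^*$ and every $z\in B(z_0,\delta_0)$, and since the $Y$-valued series converges in norm (its partial sums are Cauchy because $\|a_n\|\le\delta^{-n}\sup_{\partial B(z_0,\delta)}\|g\|$ by the standard Bochner-integral estimate, so Abel's argument applies), the Hahn-Banach theorem forces $g(z)=\sum_{n=0}^\infty(z-z_0)^na_n$ on $B(z_0,\delta_0)$.

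There remains the closing of the loop, for which I would prove $(\mr{ii})\Rightarrow(\mr{i})$: if $g$ is $C^1$ with $\frac{\partial g}{\partial\overline z}=0$, then again composing with $\Phi\in Y^*$ gives a scalar $C^1$ function satisfying the Cauchy-Riemann equation, hence holomorphic, and one recovers $\ci$-differentiability of $g$ itself by a Hahn-Banach argument applied to the difference quotient, or directly from the $C^1$ hypothesis via the first-order Taylor expansion $g(z_0+h)=g(z_0)+h\frac{\partial g}{\partial r}(z_0)+o(|h|)$ combined with $\frac{\partial g}{\partial s}=i\frac{\partial g}{\partial r}$. I expect the main obstacle to be purely expository rather than mathematical: namely, being careful that no step implicitly assumes the $C^\infty$ regularity or the integral representation that one is trying to prove, and handling the interchange of continuous linear functionals with Bochner integrals and with the limit defining the difference quotient. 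Once the functional-reduction principle ``$\Phi\circ g$ is scalar-holomorphic for all $\Phi\in Y^*$, together with norm-convergence of the candidate series, yields the $Y$-valued statement by Hahn-Banach'' is isolated, each implication is a short routine verification.
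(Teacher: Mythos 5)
Your proof is correct and follows essentially the same route as the paper: reduce the key implication $(\mr{i})\Rightarrow(\mr{iii})$ to the scalar case by composing with functionals $\Phi\in Y^*$, use the classical Cauchy theory for $\Phi\circ g$, check norm-convergence of the $Y$-valued candidate series via the Bochner estimate $\|a_n\|\le\delta^{-n}\sup\|g\|$, and then invoke Hahn--Banach to remove $\Phi$; the remaining implications $(\mr{iii})\Rightarrow(\mr{i})$ (term-by-term $\ci$-differentiation) and $(\mr{i})\Leftrightarrow(\mr{ii})$ (difference quotients along the coordinate axes, standard Cauchy--Riemann computation) are handled the same way in both. The only cosmetic difference is that the paper first records the vector-valued Cauchy integral formula and then derives the power series from it, whereas you go directly to the series; this does not affect the substance of the argument.
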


\begin{proof}
Assume that $(\mr{i})$ holds. Then, for every linear continuous functional $\Phi : Y \funzione \ci$, we have that $\Phi \circ g : D \funzione \ci$ is holomorphic in the usual sense. By the Cauchy formula, we deduce $\Phi(g(z)) = (2\pi i)^{-1}\int_\gamma (w-z)^{-1}\Phi(g(w))\de w=\Phi\big((2\pi i)^{-1}\int_\gamma (w-z)^{-1}g(w)\de w\big)$ for every simple arc $\gamma$ in $D$ whose image is a circle containing $z$ in its interior (the second integral is meant in the Bochner sense). The complex Hahn-Banach theorem now implies the validity of the following vector Cauchy formula: 
$g(z) = (2\pi i)^{-1}\int_\gamma (w-z)^{-1}g(w)\de w$. Arguing exactly as in the scalar case, this formula allows to prove statement $(\mr{iii})$ (observe that the the integral defining $a_n$ is independent of $r$ by virtue of the holomorphy of $\Phi \circ g$ and by the complex Hahn-Banach theorem). Vice versa power series with coefficients in $Y$ are $\ci$-differentiable (as the same direct argument used in the scalar case shows), therefore $(\mr{iii})$ implies $(\mr{i})$.

Assuming again $(\mr{i})$ and arguing as in the scalar case, we find that $g' = \frac{\partial g}{\partial r} = - i\frac{\partial g}{\partial s}$ so $\frac{\partial g}{\partial \overline{z}}=0$ on $D$. This proves $(\mr{ii})$. A standard computation gives implication $(\mr{ii})\Longrightarrow(\mr{i})$.
\end{proof}

\begin{Cor}\label{C-diff->C-an}
Let $D$ be a non-empty open subset of $\ci$, let $(Y,\norma{\cdot}{})$ be a complex Banach space and let $g : D \funzione Y$ be a holomorphic function. We have:
\begin{itemize}
\item[$(\mr{i})$] $g$ is real analytic. More precisely, $g$ has the following property: Pick any $z_0=r_0 + s_0i \in D$ and $\delta_0>0$ such that $B(z_0,2\delta_0)\subset D$. Then, there are elements $c_{l,m}$ of $Y$ for each $l,m\in\enne$ such that
\begin{equation}\label{real power series}
  g(z) = \sum_{l, m \in \enne} (r-r_0)^l(s-s_0)^mc_{l,m}  \qquad 
  \forall z=r+si \in B(z_0,\delta_0),\  r, s \in \erre,
\end{equation} 
where the convergence in \eqref{real power series} is meant in the sense of summability, that is, for every $\eps > 0$, there exists a finite set $F_\eps \subset \enne^2$ such that 
$\norma{g(z) - \sum_{(l,m) \in F} (r-r_0)^l(s-s_0)^mc_{l,m}}{} < \eps$ for every finite set $F\subset\enne^2$ with $F_\eps \subset F$. Moreover, for every $r+si\in B(z_0,\delta_0)$, the series $\sum_{l, m \in \enne} (r-r_0)^l(s-s_0)^mc_{l,m}$ converges absolutely and, for every bijective mapping $(\alpha,\beta):\enne\to\enne^2$, the rearranged series $\sum_{n\in\enne} (r-r_0)^{\alpha(n)}(s-s_0)^{\beta(n)}c_{\alpha(n),\beta(n)}$ converges absolutely. 
\item[$(\mr{ii})$] Suppose that $D$ is invariant under the complex conjugation and $0\in D$. Define the function $h:D\setmeno\erre\funzione Y$ by $h(z):=(z-\overline{z})^{-1}(g(z)-g(\overline{z}))$. Pick $\delta_0>0$ such that $B(0,2\delta_0)\subset D$. Then, there are elements $d_{l,k}$ of $Y$ for each $l,k\in\enne$ such that the series $\sum_{l, k \in \enne} r^ls^{2k}d_{l,k}$ is absolutely convergent for every $r+si \in B(0,\delta_0)$ with $r,s\in\erre$, and its sum is equal to $h(r+si)$ if $r+si \in B(0,\delta_0)\setmeno\erre$.
\end{itemize}
\end{Cor}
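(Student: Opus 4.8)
The plan is to deduce both assertions from the vector-valued Cauchy power series expansion of Proposition~\ref{char-Y-holom}$(\mathrm{iii})$, by passing from a one-variable complex power series to a two-variable real power series; the only genuine work is the bookkeeping of absolute convergence in $Y$.

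\emph{Part $(\mathrm{i})$.} Write $z_0=r_0+s_0i$. By Proposition~\ref{char-Y-holom}$(\mathrm{iii})$ there are $a_n\in Y$ with $g(z)=\sum_{n=0}^\infty (z-z_0)^n a_n$ for $z\in B(z_0,2\delta_0)$, and the Cauchy estimates $\|a_n\|\le\delta^{-n}\sup_{\partial B(z_0,\delta)}\|g\|$, valid for every $\delta<2\delta_0$, give $\sum_{n=0}^\infty\|a_n\|\rho^n<\infty$ whenever $\rho<2\delta_0$. I would substitute $z-z_0=(r-r_0)+i(s-s_0)$, expand $(z-z_0)^n$ by the binomial theorem, and set $c_{l,m}:=\binom{l+m}{l}i^m a_{l+m}$, so that $(z-z_0)^n a_n=\sum_{l+m=n}(r-r_0)^l(s-s_0)^m c_{l,m}$. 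For $z\in B(z_0,\delta_0)$ one has $|r-r_0|+|s-s_0|\le\sqrt2\,|z-z_0|<\sqrt2\,\delta_0<2\delta_0$, hence
\[
  \sum_{(l,m)\in\enne^2}\|c_{l,m}\|\,|r-r_0|^l|s-s_0|^m=\sum_{n=0}^\infty\|a_n\|\big(|r-r_0|+|s-s_0|\big)^n<\infty .
\]
Thus the family $\big(c_{l,m}(r-r_0)^l(s-s_0)^m\big)_{(l,m)\in\enne^2}$ is absolutely summable in $Y$, hence unconditionally summable; enumerating it by total degree $n=l+m$ recovers the original convergent series $\sum_n(z-z_0)^n a_n=g(z)$, so its sum is $g(z)$. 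This is exactly \eqref{real power series}, and the summability, absolute convergence and invariance under arbitrary rearrangements asserted in $(\mathrm{i})$ are immediate consequences.

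\emph{Part $(\mathrm{ii})$.} Now center the expansion at $0$: $g(z)=\sum_{n\ge0}z^n a_n$ on $B(0,2\delta_0)$, with $\sum_n\|a_n\|\rho^n<\infty$, hence also $\sum_n n\|a_n\|\rho^{n-1}<\infty$, for $\rho<2\delta_0$. For $z=r+si\notin\erre$,
\[
  h(z)=\frac{g(z)-g(\overline{z})}{z-\overline{z}}
       =\sum_{n\ge1}\frac{z^n-\overline{z}^{\,n}}{z-\overline{z}}\,a_n
       =\sum_{n\ge1}\Big(\sum_{j=0}^{n-1}z^{\,j}\overline{z}^{\,n-1-j}\Big)a_n
       =\sum_{p,q\ge0}z^{\,p}\overline{z}^{\,q}\,a_{p+q+1},
\]
the rearrangement being legitimate since $\sum_{p,q}|z|^{p+q}\|a_{p+q+1}\|=\sum_{n\ge1}n|z|^{n-1}\|a_n\|<\infty$ for $|z|<2\delta_0$. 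Because $a_{p+q+1}$ is symmetric in $p,q$, the family $\big(z^q\overline{z}^{\,p}a_{p+q+1}\big)_{(p,q)}$ is a reindexing of the previous one and has the same sum, so averaging and using $\overline{z}^{\,p}z^{\,q}=\overline{z^{\,p}\overline{z}^{\,q}}$ gives $h(z)=\sum_{p,q}\re\!\big(z^p\overline{z}^{\,q}\big)\,a_{p+q+1}$. Writing, say for $p\ge q$, $z^p\overline{z}^{\,q}=(r^2+s^2)^q z^{p-q}$ and expanding $\re(z^{p-q})$ shows that each $\re(z^p\overline{z}^{\,q})$ is a polynomial in $r$ and $s^2$; expanding these polynomials and collecting monomials produces an expansion $h(r+si)=\sum_{l,k\in\enne}r^l s^{2k}d_{l,k}$ with $d_{l,k}$ obtained by grouping. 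Its absolute convergence on $B(0,\delta_0)$ follows from $(r^2+s^2)^q\le(|r|+|s|)^{2q}$, from the fact that the absolute values of the terms of $\re(z^{p-q})$ sum to at most $(|r|+|s|)^{p-q}$, and hence $\sum_{l,k}\|d_{l,k}\|\,|r|^l|s|^{2k}\le\sum_{p,q}\|a_{p+q+1}\|(|r|+|s|)^{p+q}=\sum_{n\ge1}n\|a_n\|(|r|+|s|)^{n-1}<\infty$, valid since $|r|+|s|\le\sqrt2\,|z|<\sqrt2\,\delta_0<2\delta_0$. This is the assertion of $(\mathrm{ii})$.

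The steps that are mere routine are the binomial manipulations and the elementary inequality $|r-r_0|+|s-s_0|\le\sqrt2\,|z-z_0|$. The point that requires care is the transition from a single complex power series to an absolutely convergent double real power series --- that is, a two-variable rearrangement of a $Y$-valued family --- and, in $(\mathrm{ii})$, checking that the symmetrization leaves only even powers of $s$ (consistent with the obvious identity $h(r-si)=h(r+si)$) while the radii of convergence tolerate the $\sqrt2$ loss, which is precisely why the hypotheses are phrased in terms of the doubled ball $B(z_0,2\delta_0)$. All rearrangement facts invoked are the standard ones for absolutely summable families in a Banach space.
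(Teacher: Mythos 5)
Your proof is correct and takes essentially the same route as the paper's: both start from the vector-valued Cauchy power series of Proposition~\ref{char-Y-holom}$(\mr{iii})$, convert it by the binomial theorem into an absolutely summable double real power series in $r$ and $s$, set $c_{l,m}=\binom{l+m}{m}i^m a_{l+m}$, and secure absolute summability on the smaller ball via Cauchy estimates (the paper bounds $|r-r_0|+|s-s_0|\le 2\delta_1<2\delta$, you use $|r-r_0|+|s-s_0|\le\sqrt2\,|z-z_0|<\sqrt2\,\delta_0<2\delta_0$; both are fine). The only difference is cosmetic and is in part~$(\mr{ii})$: the paper expands $(r+si)^n-(r-si)^n$ directly, so the even-$m$ binomial terms cancel and dividing by $2si$ produces $d_{l,k}=\binom{l+2k+1}{2k+1}(-1)^k a_{l+2k+1}$, whereas you use the finite geometric identity $\frac{z^n-\overline{z}^{\,n}}{z-\overline{z}}=\sum_{j=0}^{n-1}z^j\overline{z}^{\,n-1-j}$ and a symmetrization over $(p,q)\mapsto(q,p)$ to see that only even powers of $s$ survive; both yield the same controlling estimate $\sum_{n\ge1}n\norma{a_n}{}(|r|+|s|)^{n-1}<\infty$.
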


\begin{proof}
By equivalence $(\mr{i})\Longleftrightarrow(\mr{iii})$ in Proposition \ref{char-Y-holom}, we know that $g(z) = \sum_{n = 0}^\infty (z-z_0)^na_n $ for every $z \in B(z_0,2\delta_0)$, where $a_n = (2\pi i)^{-1}\int_{\partial B(z_0,\eta)} (z-z_0)^{-n-1}g(z)\de z$ for any $\eta \in \opint{0,2\delta_0}$. By the binomial formula, we find 
\begin{equation}\label{g(z)=series}
  g(z) = \sum_{n = 0}^\infty  \sum_{m=0}^n \binom{n}{m}i^m(r-r_0)^{n-m}(s-s_0)^ma_n
  \qquad \forall z=r+si \in B(z_0,2\delta_0).
\end{equation}
Take $\delta_1,\delta\in\erre$ such that $0 < \delta_1 < \delta < \delta_0$ and set $M:=\max_{\zeta\in \partial B(z_0,2\delta)}\norma{g(\zeta)}{}$. Since $a_n = (2\pi i)^{-1}\int_{\partial B(z_0,2\delta)} (z-z_0)^{-n-1}g(z)\de z$, it follows that $\norma{a_n}{} \le M(2\delta)^{-n}$ for each $n \in \enne$,
and
\begin{align}
 &\sum_{m=0}^n \binom{n}{m}|r-r_0|^{n-m}|s-s_0|^m \norma{a_n}{}= (|r-r_0|+ |s-s_0|)^n\norma{a_n}{}  \notag \\
   & \le \Big(\frac{\delta_1+\delta_1}{2\delta}\Big)^nM = \Big(\frac{\delta_1}{\delta}\Big)^nM  \qquad \forall r+si \in B(z_0,\delta_1).\notag
\end{align}
Therefore, if we set $c_{l,m}:=\binom{l+m}{m}i^m a_{l+m}$, the series $\sum_{l,m \in \enne}(r-r_0)^l(s-s_0)^mc_{l,m}$ is summable and its sum is $g(z)$ for every $z=r+si\in B(z_0,\delta_0)$. Actually, each of its rearrangements $\sum_{n\in\enne}(r-r_0)^{\alpha(n)}(s-s_0)^{\beta(n)}c_{\alpha(n),\beta(n)}$ is absolutely convergent to $g(z)$ for every $z=r+si\in B(z_0,\delta_0)$. This proves $(\mr{i})$.

Let us show $(\mr{ii})$. Suppose now that $D$ is invariant under the complex conjugation, and $z_0=0\in D$. By \eqref{g(z)=series}, we have:
\begin{align}
h(z)=(z-\overline{z})^{-1}(g(z)-g(\overline{z}))&=(2si)^{-1}\sum_{n=0}^\infty\sum_{m=0}^n\binom{n}{m}i^mr^{n-m}s^m(1-(-1)^m)a_n \notag \\
&=\sum_{n=1}^\infty\sum_{k\in\enne,\ 2k+1\leq n}\binom{n}{2k+1}(-1)^kr^{n-2k-1}s^{2k}a_n 
\label{h=}
\end{align}
for every $z=r+si\in B(0,2\delta_0)\setmeno\erre$. Consider again $\delta_1,\delta\in\erre$ such that $0 < \delta_1 < \delta < \delta_0$ and $M=\max_{\zeta\in \partial B(z_0,2\delta)}\norma{g(\zeta)}{}$. For every $r+si\in B(0,\delta_1)$, we have:
\begin{align*}
&\sum_{n=1}^\infty\sum_{k\in\enne,\ 2k+1\leq n}\binom{n}{2k+1}|r|^{n-2k-1}|s|^{2k}\norma{a_n}{}\leq\sum_{n=1}^\infty\sum_{k\in\enne,\ 2k+1\leq n}\binom{n}{2k+1}\delta_1^{n-1}\frac{M}{(2\delta)^n}\\
&\leq\frac{M}{\delta_1}\sum_{n=1}^\infty\left(\frac{\delta_1}{\delta}\right)^n\frac{1}{2^n}\sum_{k\in\enne,\ 2k+1\leq n}\binom{n}{2k+1}\leq\frac{M}{\delta_1}\sum_{n=0}^\infty\left(\frac{\delta_1}{\delta}\right)^n\frac{1}{2^n}\sum_{h=0}^n\binom{n}{h}=\frac{M}{\delta_1}\sum_{n=0}^\infty\left(\frac{\delta_1}{\delta}\right)^n.
\end{align*}
Thus, if we set $d_{l,k}:=\binom{l+2k+1}{2k+1}(-1)^ka_{l+2k+1}$ for every $l,k\in\enne$, then the series $\sum_{l, k \in \enne} r^ls^{2k}d_{l,k}$ is absolutely convergent for every $r+si \in B(0,\delta_0)$, and, 
thanks to \eqref{h=}, its sum is equal to $h(r+si)$ if $r+si \in B(0,\delta_0)\setmeno\erre$, as required.
\end{proof}



\end{document}